\documentclass[11pt, a4paper]{amsart}

\usepackage[T1]{fontenc}     
\usepackage{amsfonts}        
\usepackage{listings}
\usepackage{color} 
\usepackage{fancyvrb}
\usepackage{amsmath}
\usepackage{mathrsfs}
\usepackage{amsthm}          
\usepackage{graphicx}		
\usepackage{enumerate}   
\usepackage{bbm}
\usepackage{euscript}
\usepackage{yhmath}
\usepackage{MnSymbol}
\usepackage[colorlinks=true,
            linkcolor=blue,  
            citecolor=blue,  
            urlcolor=blue]{hyperref} 
\usepackage[a4paper, margin=1in]{geometry}
  
\newcommand{\R}{\mathbb{R}}
\newcommand{\Cp}{\mathbb{C}}
\newcommand{\RN}{\mathbb{R}^{N}}
\newcommand{\Rn}{\mathbb{R}^{n}}
  
\newcommand{\N}{\mathbb{N}}

\newcommand{\ep}{\varepsilon}
\newcommand{\h}{\widehat}

\newcommand{\M}{\mathbb{R}^{N \times m}}
\newcommand{\nab}{\nabla}
\newcommand{\wc}{\rightharpoonup}
\newcommand{\mbf}{\mathbf}
\newcommand{\n}{\nab_{\mbf a}}
\newcommand{\p}{\partial}
\newcommand{\LL}{\mathrm{L}}
\newcommand{\WW}{\mathrm{W}}
\newcommand{\CC}{\mathrm{C}}
\newcommand{\Leb}{\mathscr{L}}
\newcommand{\dd}{\mathrm{d}}
\newcommand{\loc}{\mathrm{loc}}
\newcommand{\HH}{\mathcal{H}}

\newtheorem{theorem}{Theorem}[section] 
\newtheorem{lemma}[theorem]{Lemma}
\newtheorem{prop}[theorem]{Proposition}   
\newtheorem{corollary}[theorem]{Corollary}

\theoremstyle{definition}
\newtheorem{definition}[theorem]{Definition}

\DeclareMathOperator{\Reg}{Reg}
\DeclareMathOperator{\Sing}{Sing}

\title[Mixed-smoothness regularity]{Regularity for minimizers of mixed-smoothness variational integrals}

\author[J.~Kristensen]{Jan Kristensen}
\author[A.~Prosinski]{Adam Prosinski}
\author[A.~Strong]{Aidan Strong}

\address{Mathematical Institute, University of Oxford\\Andrew Wiles Building, Radcliffe Observatory Quarter, Woodstock Road, Oxford, OX2 6GG, Oxford, United Kingdom}

\email{kristens@maths.ox.ac.uk}

\email{prosinski.adam@gmail.com}

\email{aidan.strong@maths.ox.ac.uk}

\begin{document}
\baselineskip=15pt

\begin{abstract}
We establish the first partial regularity results for minimizers of a class of suitably quasiconvex variational integrals defined on anisotropic Sobolev spaces.
These variational integrals are related to higher gradient continuum models for pantographic metamaterials.
\end{abstract}

\keywords{Quasiconvexity, partial regularity, mixed smoothness, anisotropic Sobolev space}

\maketitle

\begin{section}{Introduction}
Over the past 30 years the modelling and design of novel, exotic metamaterials with desirable properties have led
researchers to consider a number of higher gradient continuum models for pantographic structures \cite{metamaterials,dellisola2015designing,stilz2022continuum}.
An important first step clarifying the mathematical basis of such theories concerns linear pantographic sheets
\cite{eremeyev2018linear} whose stored energy densities are quadratic forms in certain higher order derivatives.
One of the simplest such models is the quadratic functional
\begin{equation}\label{quadraticcase}
\mathfrak{G}[v]=\int_{\Omega} \! \Bigl( |\p_1v|^2+|\p_{2}^2v|^2 \Bigr) \, \dd x
\end{equation}
defined on suitable maps $v \colon \Omega \subset \R^2 \to \R^2$, where we note that the different differentiation orders reflect
the desired feature that the metamaterial stores different amounts of energy in different directions. The authors of \cite{eremeyev2018linear}
then proceeded to establish existence and uniqueness of weak solutions to related systems and of minimizers in suitable classes of anisotropic Sobolev spaces.
While \cite{eremeyev2018linear} did not address the issue of regularity of weak solutions, it is not difficult to show that minimizers of the particular functional
$\mathfrak{G}$ are all $\CC^\infty$ smooth despite lack of ellipticity in the usual Legendre-Hadamard sense. Indeed most of the functionals
considered in \cite{eremeyev2018linear} are semi-elliptic in the sense of H\"{o}rmander \cite{hormander1969linear} (the definition is recalled in Section 3) and
as such they are hypoelliptic too and therefore their interior regularity follows by classical means.
As the title of \cite{eremeyev2018linear} suggests their aim is to develop a linear theory and with the above remarks concerning regularity theory in this
setting, together with the material presented in Section 3 below, the linear theory has a reasonably firm foundation.
Now one of the desired properties of these metamaterials is that their elastic range should be many
orders of magnitude wider than that exhibited by more conventional materials. It then becomes a natural task to extend the mathematical
framework considered in \cite{eremeyev2018linear} to the more general nonlinear case. Our aim here is to establish partial regularity for
minimizers for a class of variational problems that, at least at a phenomenological level, is a first step towards a
nonlinear variational framework for pantographic metamaterials. Before turning to our results we pause to briefly describe the set-up, where
the main point is to allow for partial derivatives of different orders and otherwise be guided by the classical variational theory.
For a fixed multi-index with non-zero entries $\mbf{a}=(a_1, \dots , a_n) \in \mathbb{N}^n$, and a given map $v\colon \Omega \rightarrow \RN$ defined and locally
integrable on an open and bounded subset $\Omega$ of $\Rn$ we define its $\mbf{a}$-gradient by
$$
\n v(x) := \bigl\{ \p^\alpha v(x) \bigr\}_{\langle \alpha, \mbf{a}^{-1}\rangle=1},
$$
where the partial derivatives are understood in the sense of distributions, $\mbf{a}^{-1}=(1/a_1, \dots, 1/a_n)$ and
$\langle \alpha, \mbf{a}^{-1}\rangle = \sum_{i=1}^{n} \alpha_{i}/a_{i}$. Vectors in $v \in \RN$ are identified with column $N \times 1$ matrices and so we
note in particular that $\partial^{\alpha}v$ is the column matrix whose entries are the partial derivatives of the coordinate functions. The partial derivatives
that appear in the $\mbf{a}$-gradient are exactly those corresponding to multi-indices that belong to the set
$\mathcal{H}=\left\{\alpha: \langle \alpha, \mbf{a}^{-1}\rangle=1 \right\}$ that we, following \cite{kazstowoj17}, call a pattern of homogeneity.
Let $m$ be the number of elements in $\mathcal{H}$, order the multi-indices lexicographically
and consider the $\mbf{a}$-gradient as an $N \times m$ matrix with real entries. Working with such collections of derivatives allows us to
use scaling arguments known from the standard set-up (\cite{Dacorogna,Rindler} and \cite{dibenedetto,GG}); moreover a corresponding theory for anisotropic
Sobolev spaces, $\WW^{\mbf{a},p}$, has been developed by Nikolskii and coworkers (we recall the relevant definitions and facts in Section 2).
We focus here on variational integrals of the basic form
\begin{equation}\label{eqn mixed functional}
  \mathfrak{F}[v]=\int_{\Omega} \! F(\n v) \, \dd x,
\end{equation}
where $F \colon \M \rightarrow \R$ is a continuous function, henceforth called an integrand, $\Omega$ is a bounded open subset of $\Rn$ and
$v \colon \Omega \to \RN$ are suitably smooth maps. When the integrand $F$ satisfies a $p$-growth condition the integral is well-defined
as a Lebesgue integral for maps $v$ whose $\mbf{a}$-gradients are in $\LL^p$, a property that is ensured by membership of the $\WW^{\mbf{a},p}$
mixed-smoothness Sobolev space. Combining our arguments with methods known from the standard variational set-up \cite{GG} it is possible to also establish
partial regularity results for minimizers of more general variational integrals. This extension is however rather technical and not entirely straightforward
and we have therefore decided to focus on the basic case \eqref{eqn mixed functional} here and leave the general case to future developments.

A systematic study of variational problems in this mixed-smoothness setting was undertaken by the second author in his DPhil thesis \cite{Pros,Pros2}.
There, the notion of $\mbf{a}$-quasiconvexity was studied to facilitate use of the direct method to prove existence of minimizers.
Accordingly, a continuous integrand $F\colon \M \rightarrow \R $ is $\mbf{a}$-quasiconvex at $z_{0} \in \M$ if
\begin{equation}\label{def quasiconvexity}
  \int_{\Rn} \! \Bigl( F(z_{0}+\n \phi (x))-F(z_{0}) \Bigr) \, \dd x \geq 0
\end{equation}
holds for all $\phi \in \CC_{c}^{\infty}(\Rn ,\RN )$. If $F$ is $\mbf{a}$-quasiconvex at all $z_{0} \in \M$ we simply say that $F$ is $\mbf{a}$-quasiconvex.
This definition was motivated by observations made in \cite{kazstowoj17} who introduced $\mbf{a}$-quasiconvexity in connection to Ornstein non-inequalities
in a mixed smoothness set-up.
The reader will notice that $\mbf{a}$-quasiconvexity is a natural extension to the mixed-smoothness set-up of Morrey's quasiconvexity \cite{morreyQC} and that it is distinct from
the notions of generalized quasiconvexity previously considered in the literature, including \cite{meyers,fonmul,iwsb}. As we will show in Section 2,
this difference is particularly pronounced when the pattern of homogeneity $\mathcal{H}$ contains multi-indices whose orders have different remainders modulo $4$.
In such cases the linearization of $\mbf{a}$-quasiconvexity, that we call $\mbf{a}$-rank-one convexity, is a sort of plurisubharmonicity. There is then also
the added technical issue that the usual local Lipschitz bounds \cite{Dacorogna,Rindler} known for rank-one convexity are
not available. However, when the orders of multi-indices in the pattern of homogeneity $\mathcal{H}$ all are congruent modulo $4$, then $\mbf{a}$-rank-one
convexity is simply convexity in the directions of a spanning cone and then the local Lipschitz bounds for $\mbf{a}$-rank-one convex integrands follow in the
usual way. In such cases the difference to previous set-ups is then merely about the intrinsic anisotropic scaling.
However, despite these issues the notion of $\mbf{a}$-quasiconvexity plays at the abstract level exactly the same role
as Morrey's quasiconvexity in the classical set-up: it is responsible for both coercivity and lower semicontinuity \cite{Pros,Pros2}.
More precisely, it was shown in \cite[Theorem 6.1]{Pros2} that for nonnegative, continuous integrands $F$ satisfying a $p$-growth assumption,
$0 \leq F(z)\leq L\bigl( |z|^{p}+1 \bigr)$, sequential weak lower semicontinuity of $\mathfrak{F}$ defined at \eqref{eqn mixed functional} on $\WW^{\mbf{a},p}(\Omega , \RN )$
is equivalent to $\mbf{a}$-quasiconvexity of $F$. In fact, under this growth and nonnegativity condition, it is equivalent to the stronger notion
of closed $\WW^{\mbf{a},p}$-quasiconvexity of $F$ and so is in turn equivalent to lower semicontinuity in a compensated compactness set-up (see \cite{Tartar79},
\cite{jk2015} and \cite{Pros,Pros2}).

Coercivity was characterized in \cite[Theorem 4.3]{Pros2}, where it in particular was shown that for a continuous
integrand satisfying the $p$-growth condition, $|F(z)| \leq L\bigl( |z|^{p}+1 \bigr)$, coercivity of $\mathfrak{F}$ defined at \eqref{eqn mixed functional} on
$\WW^{\mbf{a},p}$-Dirichlet classes is equivalent to existence of $\ell > 0$ and $z_{0} \in \M$ such that the integrand $z \mapsto F(z) -\ell |z|^{p}$
is $\mbf{a}$-quasiconvex at $z_0$. In order to convince the reader that our main result, stated as Theorem \ref{ThmPartialregularity} below, is given under the
natural assumptions we state the following existence result, where we formulate the coercivity condition in terms of the reference integrand
\begin{equation}\label{referenceintegrand}
 V_{p}(z) := \Bigl( 1+|z|^{2} \Bigr)^{\frac{p}{2}}-1.
\end{equation}
We remark in this connection that the coercivity on Dirichlet classes is equivalent to $\mbf{a}$-quasiconvexity of the difference $F-G$ at some $z_0$ for any choice of
integrand $G$ satisfying 0 < $\liminf_{|z| \to \infty}G(z)/|z|^{p} \leq \limsup_{|z| \to \infty}G(z)/|z|^{p} < +\infty$. The choice of $G=V_{p}$ is simply
  for easy comparison with our regularity result stated later.
\begin{prop}\label{existence}
Let $F \colon \M \to \R$ be an $\mbf{a}$-quasiconvex integrand of class $\CC^1$ satisfying for some exponent $p \in (1,\infty )$ and constant $L>0$ the
local Lipschitz condition $\bigl| F^{\prime}(z) \bigr| \leq L\bigl( |z|^{p-1}+1 \bigr)$ for all $z \in \M$. Assume that for some $\ell > 0$ the integrand $z \mapsto F(z) -\ell V_p$
is $\mbf{a}$-quasiconvex at some $z_{0} \in \M$.

Then for any bounded open subset $\Omega$ of $\Rn$ and any $g \in \WW^{\mbf{a},p}_{\mathrm{loc}}(\Rn , \RN )$ the variational integral
$$
\mathfrak{F}[v] = \int_{\Omega} \! F\bigl( \nabla_{\mbf{a}}v(x) \bigr) \, \dd x
$$
admits a minimizer on the Dirichlet class $\WW^{\mbf{a},p}_{g}( \Omega , \RN ) = g+\WW^{\mbf{a},p}_{0}( \Omega , \RN )$. Furthermore, any minimizer
$u \in \WW^{\mbf{a},p}_{g}( \Omega , \RN )$ satisfies the Euler-Lagrange system in the weak sense:
$$
\int_{\Omega} \! F^{\prime}\bigl( \nabla_{\mbf{a}}u(x) \bigr) \cdot \nabla_{\mbf{a}}\phi (x) \, \dd x = 0 \quad \forall \phi \in \WW^{\mbf{a},p}_{0}( \Omega , \RN ) .
$$
\end{prop}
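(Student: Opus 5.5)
We use the direct method on the Dirichlet class $\WW^{\mbf{a},p}_{g}(\Omega,\RN)$, with the two ingredients from \cite{Pros2} recalled above: the coercivity characterization \cite[Theorem 4.3]{Pros2} and the lower semicontinuity theorem \cite[Theorem 6.1]{Pros2}.

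\emph{Growth of $F$.} Integrating $F'$ along segments gives
\[
|F(z)| \leq |F(0)| + L\bigl(|z|^p+|z|\bigr) \leq C\bigl(|z|^p+1\bigr).
\]
So $\mathfrak{F}$ is finite on $\WW^{\mbf{a},p}_g$. Since $|z|^p$ is controlled above and below by $V_p(z)$ up to affine constants, $\ell V_p$ is an admissible comparison integrand $G$ in the remark preceding the proposition.

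\emph{Coercivity.} We expect \cite[Theorem 4.3]{Pros2} to yield
\[
\mathfrak{F}[v] \geq c\int_\Omega |\n v|^p \, \dd x - C(g,\Omega),
\]
and the underlying mechanism is as follows. Write $v = g+\phi$ and first take $\phi \in \CC^\infty_c(\Omega,\RN)$. Set $H = F-\ell V_p$, which is $\mbf{a}$-quasiconvex at $z_0$. One then compares $H(\n g+\n\phi)$ with $H(z_0+\n\phi)$, using the growth bounds and Young's inequality to absorb the cross terms into a small multiple of $\int|\n\phi|^p$. Extending $\phi$ by zero and applying \eqref{def quasiconvexity} at $z_0$ then gives $\int_\Omega F(\n v) \geq \ell\int_\Omega V_p(\n v) - C$. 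Density of $\CC^\infty_c$ in $\WW^{\mbf{a},p}_0$, together with continuity of $\mathfrak{F}$ in the strong topology from the $p$-growth, extends this to all of $\WW^{\mbf{a},p}_g$.

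\emph{Compactness.} Let $(u_j)$ be a minimizing sequence, so $\n(u_j-g)$ is bounded in $\LL^p$. We need a Poincaré-type inequality in $\WW^{\mbf{a},p}_0(\Omega)$ bounding the full anisotropic norm by $\|\n\phi\|_{\LL^p}$; this should be among the facts on anisotropic spaces recalled in Section 2, since $\Omega$ is bounded. Reflexivity for $1<p<\infty$ then gives a subsequence $u_j \wc u$ weakly in $\WW^{\mbf{a},p}$, and $u\in\WW^{\mbf{a},p}_g$ because the affine class is weakly closed.

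\emph{Lower semicontinuity.} The theorem \cite[Theorem 6.1]{Pros2} is stated for nonnegative integrands, while here $F$ need not be nonnegative. We therefore apply it to $\tilde F = F + C(1+|z|^p)$ with $C$ chosen so that $\tilde F \geq 0$. We do not expect $\tilde F$ to be $\mbf{a}$-quasiconvex: it is only a sum of the $\mbf{a}$-quasiconvex $F$ and the convex $C|z|^p$. Instead we expect to use $\mbf{a}$-quasiconvexity of $F$ together with a localized (Vitali/blow-up) version of the argument, or equi-integrability of $|\n u_j|^p$ after decomposition lemma truncation, to handle the negative part. This step is the main obstacle. The first thing to try is to note that $F \geq -C(1+|z|^p)$ and use the decomposition lemma to split $\n u_j$ into a $p$-equi-integrable part plus a part concentrating on small sets. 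The concentrating part contributes nonnegatively in the limit thanks to the coercivity established above. Lower semicontinuity along this sequence then makes $u$ a minimizer.

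\emph{Euler--Lagrange system.} For $\phi\in\WW^{\mbf{a},p}_0$, the function $t\mapsto\mathfrak{F}[u+t\phi]$ has a minimum at $t=0$. By the mean value theorem,
\[
\frac{F(\n u+t\n\phi)-F(\n u)}{t}
\]
is dominated by $L\bigl(|\n u|+|\n\phi|+1\bigr)^{p-1}|\n\phi| \in \LL^1$. Dominated convergence therefore gives differentiability at $t=0$, and the derivative, which must vanish, is $\int_\Omega F'(\n u)\cdot\n\phi \, \dd x$.
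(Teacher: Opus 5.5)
Your overall route is the standard one the paper has in mind when it omits the proof and refers to \cite{ChJK2017,Pros2}: the direct method, coercivity from the $\mbf{a}$-quasiconvexity of $F-\ell V_p$ at $z_0$, and the Euler--Lagrange system by dominated convergence using $|F'(z)|\le L(|z|^{p-1}+1)$. Your coercivity computation is fine. So is the compactness step: the Poincar\'e inequality on $\WW^{\mbf{a},p}_0(\Omega)$ follows from an iterated one-dimensional Poincar\'e inequality in $x_1$ on a box containing $\Omega$, combined with Proposition \ref{prp:Wa,p}(ii). The Euler--Lagrange derivation is also correct.

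The genuine gap is the lower semicontinuity step. You leave it open, and both reasons you give there are wrong.
\begin{enumerate}[(a)]
\item $\tilde F=F+C(1+|z|^p)$ \emph{is} $\mbf{a}$-quasiconvex. The defining inequality is linear in the integrand, and convex integrands are $\mbf{a}$-quasiconvex by Jensen's inequality, since $\int\nabla_{\mbf{a}}\phi\,\dd x=0$ for compactly supported $\phi$. So \cite[Theorem 6.1]{Pros2} does apply to $\tilde F$. The real obstruction is that you must then subtract $C\int_\Omega(1+|\nabla_{\mbf{a}}u_j|^p)\,\dd x$, which is only weakly lower semicontinuous, so the inequality goes the wrong way. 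This persists even for $p$-equi-integrable sequences, because of oscillations.
\item Coercivity cannot make the concentrating part contribute $\geq o(1)$. The bound $\mathfrak{F}[g+\phi]\geq c\|\nabla_{\mbf{a}}\phi\|_{\LL^p}^p-C(g,\Omega)$ carries a fixed additive constant, which does not shrink with the measure of the concentration set.
\end{enumerate}

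What closes the step is $\mbf{a}$-quasiconvexity of $F$ at $0$ together with the Lipschitz bound on $F'$.
\begin{enumerate}[(i)]
\item You need an anisotropic decomposition lemma. It is not among the tools of Section 2, so it must be taken from \cite{Pros,Pros2} or proved by an anisotropic truncation. It gives $u_j-u=\psi_j+\zeta_j$ (extended by zero) with all supports in a fixed compact set, $|\nabla_{\mbf{a}}\psi_j|^p$ equi-integrable, and $|E_j|\to 0$ where $E_j=\{\nabla_{\mbf{a}}\zeta_j\neq 0\}$.
\item Set $a_j=\nabla_{\mbf{a}}(u+\psi_j)$ and $b_j=\nabla_{\mbf{a}}\zeta_j$. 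Combine the pointwise bound $|F(a+b)-F(a)-F(b)+F(0)|\leq C(|a|+|a|^p+|a||b|^{p-1})$ with H\"older's inequality and equi-integrability. Off $\Omega$ use that $b_j=-\nabla_{\mbf{a}}\psi_j$ is equi-integrable. This yields
\[
\int_\Omega F(\nabla_{\mbf{a}}u_j)\,\dd x=\int_\Omega F(a_j)\,\dd x+\int_{\Rn}\bigl(F(b_j)-F(0)\bigr)\,\dd x+o(1).
\]
\item The middle term is nonnegative by $\mbf{a}$-quasiconvexity of $F$ at $0$, extended to compactly supported $\WW^{\mbf{a},p}$ maps by mollification and $p$-growth. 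No coercivity is needed here.
\item For the first term you still need $\liminf_j\int_\Omega F(a_j)\,\dd x\geq\int_\Omega F(\nabla_{\mbf{a}}u)\,\dd x$. This follows from the usual blow-up and cut-off argument, which gives Jensen's inequality for the generated Young measure. The missing sign condition is harmless here precisely because $F(a_j)$ is equi-integrable. The cut-off errors involve only lower-order derivatives, which converge strongly by Proposition \ref{prp:Wa,p}(iv).
\end{enumerate}
Without these ingredients your proposal does not establish that the weak limit is a minimizer.
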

We omit the proof here as it follows by standard means and the results of \cite{ChJK2017}, \cite{Pros2}, some of which are recalled below in Section 2 and needed later in our
regularity proof. In fact, it is possible to obtain natural existence results also under less restrictive growth conditions, as in \cite{jk2015}, \cite{ChJK2017}, \cite{gk24},
and we intend to report on that elsewhere. However, we emphasize that besides the assumption of $p$-growth all other assumptions are in terms of $\mbf{a}$-quasiconvexity
and that these assumptions are sharp for the requisite sequential weak lower semicontinuity and coercivity in the $\WW^{\mbf{a},p}$-Dirichlet class.

This settles the question of existence of minimizers in Dirichlet classes, at least at the abstract level. The reason for this qualification is that, just like
in the standard case, $\mbf{a}$-quasiconvexity is not easy to establish in general. Consider the functional $\mathfrak{G}$ defined at \eqref{quadraticcase}: it fits
into the above mixed smoothness set-up with $\mbf{a}=(1,2)$. Clearly the functional $\mathfrak{G}$ has a convex integrand and so the functional is of course
convex too. It is not difficult to show that any quadratic $\mbf{a}$-quasiconvex integrand defines a functional that is convex on $\WW^{\mbf{a},2}$-Dirichlet 
classes despite the fact that a quadratic $\mbf{a}$-quasiconvex integrand need not itself be convex as a function on $\M$. In fact, since $\mbf{a}$-quasiconvexity
generalizes the usual notion of quasiconvexity in the sense of Morrey, we should not expect the variational integral corresponding to an $\mbf{a}$-quasiconvex
integrand to have the property that it is convex on $\WW^{\mbf{a},p}$-Dirichlet classes. We therefore again have the phenomenon that the considered variational
integrals may admit extremals that are not minimizers and also various classes of local minimizers that are not (absolute) minimizers and that (absolute) minimizers in
$\WW^{\mbf{a},p}$-Dirichlet classes might be non-unique (as is well-known in the classical set-up \cite{MullerSverak03,krta03,sz04,sp09}). 

The task we set ourselves here is to establish partial regularity of minimizers in the spirit of the celebrated result of Evans \cite{Evans1986Quasiconvexity}
(see also \cite{AcerbiFusco1987,fuhu85,gimo86} and \cite{GG,mi06} and the references therein). We recall that well-known examples \cite{sverakyan,MooneySavin16}
show that one cannot expect minimizers to be fully regular even under very strong convexity assumptions and that the study of partial regularity goes back
to \cite{GiustiMiranda68,Morrey68}.

In order to prove partial regularity one typically must strengthen the quasiconvexity assumption \eqref{def quasiconvexity}.
Thus, for functionals of the form (\ref{eqn mixed functional}) we assume that the integrand $F \colon \M \to \R$ is of class $\CC^2$ and that it satisfies:

\begin{enumerate}[(H1)]\label{assumptions on F}
    \item $|F(z)| \leq L(1+|z|^p)$
    \item $|F'(z)| \leq L(1+|z|^{p-1})$
    \item For every $M>0$ there exists $l_M >0$ such that $F-l_MV_p$ is $\mbf{a}$-quasiconvex at $z_0$ for all $|z_0| \leq M$. 
\end{enumerate}
Recall that here, $V_p$, denotes the reference integrand defined at \eqref{referenceintegrand}. We remark that in the considered generality, where the
linearization of $(H3)$ amounts to a sort of plurisubharmonicity, the Lipschitz condition $(H2)$ is needed and is not a consequence of the $p$-growth condition $(H1)$. 
As already commented above, the hypotheses $(H1)$, $(H2)$, $(H3)$ in particular facilitate use of the direct method to prove existence of minimizers of $\mathfrak{F}$ in
$\WW^{\mbf{a},p}$-Dirichlet classes. These minimizers are especially local minimizers, meaning maps $u \in \WW_{\loc}^{\mbf{a},p}(\Omega , \RN)$ that for
each $\Omega^{\prime} \Subset \Omega$ satisfy
$$
\int_{\Omega^{\prime}} \! F( \n u) \, \dd x \leq \int_{\Omega^{\prime}} \! F( \n u + \n \phi ) \, \dd x
$$
for all $\phi \in \WW^{\mbf{a},p}_{0}( \Omega^{\prime}, \RN )$.
Our main theorem concerns regularity of such local minimizers:

\begin{theorem}\label{ThmPartialregularity}
Let $2 \leq p < \infty$, and let $u \in \WW_{\loc}^{\mbf{a},p}(\Omega , \RN )$ be a local minimizer of $\mathfrak{F}$ under the assumptions $(H1), (H2), (H3)$.
Let
$$
\Reg(u)=\left\{x \in \Omega: \, \n u \text{ is } \CC^{0,\theta}_{\delta_{\mbf{a}}} \text{ for all }0<\theta<1  \text{ in a neighbourhood of }  x\right\}
$$
and $\Sing(u)=\Omega \setminus \Reg(u)$. Then the open subset $\Reg(u)$ has full measure in $\Omega$, that is, $\Leb^{n} \bigl( \Omega \setminus \Reg(u) \bigr) =0$.
In particular, $\n u$ is Hölder continuous on $\Reg(u)$ in direction $x_j$ with exponent $\theta a_{j}/\max a_{i}$ for all $0<\theta<1$, $j=1, \dots, n$.
Moreover, the singular set is characterised by $\Sing(u)=\Sing_{1}(u) \cup \Sing_{2}(u)$, where
\begin{equation}\label{eqnSing1}
\Sing_{1}(u)=\left\{ x_{0}: \, \liminf_{R \searrow 0} \strokedint_{Q_R(x_0)}V_p(\n u-(\n u)_{Q_R(x_0)}) \dd x >0\right\}
\end{equation}
\begin{equation}\label{eqnSing2}
\Sing_{2}(u)=\left\{x_{0}: \, \limsup_{R \searrow 0} |(\n u)_{Q_R(x_0)}| =\infty \right\}
\end{equation}
\end{theorem}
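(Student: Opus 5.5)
The plan is to follow Evans' blow-up strategy, carried out in the anisotropic geometry: cubes $Q_R(x_0)$ are anisotropic boxes, with side $R^{1/a_i}$ (suitably normalized) in direction $x_i$. The rescaling $u_{x_0,R}(y) = R^{-1}\bigl(u(x_0+\delta_R y)-A_{x_0,R}(y)\bigr)$, with $\delta_R y=(R^{1/a_1}y_1,\dots,R^{1/a_n}y_n)$, is set up so that $\n u_{x_0,R}(y)=(\n u(x_0+\delta_R y)-(\n u)_{Q_R})/\lambda$. Here $A_{x_0,R}$ is an $\mbf{a}$-polynomial whose $\mbf{a}$-gradient equals $(\n u)_{Q_R}$. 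We work with the excess
$$E(x_0,R)=\strokedint_{Q_R(x_0)} V_p\bigl(\n u-(\n u)_{Q_R(x_0)}\bigr)\,\dd x,$$
which is comparable to $\strokedint |\n u-(\n u)|^2+|\n u-(\n u)|^p$ because $p\ge 2$.

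\textbf{Key step: excess decay.} For every $M>0$ and $\theta\in(0,1)$ there are $\tau\in(0,1)$ and $\ep>0$ such that $|(\n u)_{Q_R}|\le M$ and $E(x_0,R)<\ep$ imply $E(x_0,\tau R)\le C_M\tau^{2\theta}E(x_0,R)$, after adjusting the exponent. Iterating this gives Campanato-type estimates on the anisotropic cubes. An anisotropic Campanato characterization, stated with respect to the quasi-distance $\delta_{\mbf{a}}$ (recalled in Section 2), then yields $\n u\in \CC^{0,\theta}_{\delta_{\mbf{a}}}$ near $x_0$. The openness of the set where $E$ is small and $|(\n u)_{Q_R}|$ is bounded follows from continuity of $E$ and of averages in $x_0$. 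The Hölder exponent $\theta a_j/\max a_i$ in direction $x_j$ comes from comparing $\delta_{\mbf{a}}$ with the Euclidean distance along coordinate lines. Finally, Lebesgue's differentiation theorem applied to $\n u$ and $|\n u|^p$ shows $\Leb^n(\Sing_1\cup\Sing_2)=0$, which gives both the characterization of $\Sing(u)$ and full measure of $\Reg(u)$.

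\textbf{Proof of the excess decay.} The argument is by contradiction and blow-up. Suppose there are sequences with $\ep_j=E(x_j,R_j)\to0$, $|(\n u)_{Q_{R_j}}|\le M$, and decay failing. Rescale with $\lambda_j^2=\ep_j$ to get maps $v_j$ on $Q_1$ with $\strokedint |\n v_j|^2+\lambda_j^{p-2}|\n v_j|^p\le C$ and $(\n v_j)_{Q_1}=0$. After subtracting suitable $\mbf{a}$-polynomials, an anisotropic Poincaré inequality from Section 2 applies, so $v_j\wc v$ weakly in $\WW^{\mbf{a},2}$. Then:
\begin{enumerate}[(i)]
\item Expanding the Euler--Lagrange system of Proposition \ref{existence} around $z_j=(\n u)_{Q_{R_j}}\to z$ and using (H2) together with the $\CC^2$ bound on $|z|\le M+1$, the limit $v$ solves the linear system $\int F''(z)\n v\cdot\n\phi=0$.
\item $F''(z)$ satisfies the $\mbf{a}$-Legendre--Hadamard (rank-one type) condition obtained by linearizing (H3). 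By the Section 3 theory of such semi-elliptic systems (Fourier/Gårding arguments for the $\mbf{a}$-symbol), $v$ is smooth, with $\strokedint_{Q_\tau}|\n v-(\n v)_{Q_\tau}|^2\le C\tau^2$.
\item The weak convergence must be upgraded to strong convergence, $\strokedint_{Q_\tau}|\n(v_j-v)|^2+\lambda_j^{p-2}|\n(v_j-v)|^p\to0$, which then contradicts the failure of decay.
\end{enumerate}

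\textbf{Main obstacle: strong convergence in (iii).} For this I would prove a Caccioppoli inequality of the second kind for $u$ minus $\mbf{a}$-polynomials, with lower-order terms measured by $\mbf{a}$-gradients of lower homogeneity. The proof tests minimality against $\phi=\eta(u-A)$, where $\eta$ is an anisotropic cutoff with $|\p^\beta\eta|\lesssim r^{-\langle\beta,\mbf{a}^{-1}\rangle}$. It uses (H3) at $z_0=(\n u)$, which is allowed since $|z_0|\le M$, combined with (H1)--(H2) and a Widman hole-filling/iteration lemma to absorb the intermediate derivatives produced by the Leibniz rule. This is delicate because the Leibniz expansion of $\n(\eta w)$ generates mixed derivatives $\p^\gamma w$ with $\langle\gamma,\mbf{a}^{-1}\rangle<1$. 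These must be interpolated using anisotropic Gagliardo--Nirenberg inequalities on cubes.

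Applying this Caccioppoli inequality to $v_j-v$ (suitably corrected) gives $\strokedint_{Q_{1/2}}|\n(v_j-v)|^2+\dots\lesssim$ lower-order norms of $v_j-v$. These tend to zero by the compact embedding $\WW^{\mbf{a},2}\Subset \WW^{\gamma\text{-lower},2}$. Two further points need care. The uniform $\lambda_j^{p-2}|\cdot|^p$ control requires the Lipschitz bound (H2), since rank-one local Lipschitz estimates are unavailable here. And the $\mbf{a}$-quasiconvexity must be tested with $\WW^{\mbf{a},p}_0$ maps, which is legitimate by density and (H1).
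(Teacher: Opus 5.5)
\textbf{Gap in step (iii).} Your route, an Evans-type blow-up by contradiction, differs from the paper's. The paper never passes to a limit: Proposition~\ref{prop A harmonic approx} compares $u$ directly with the $F''(z_0)$-harmonic map $h$ by a duality argument, using the $\LL^{p_\beta}$ bounds of Theorem~\ref{propLpBoundsLinear} and the Euler--Lagrange system, and then applies Proposition~\ref{prp caccioppoli} with the Poincar\'e polynomial of $h$.

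Your route can work, but step (iii) fails as written, because you cannot apply the Caccioppoli inequality to $v_j-v$. Its proof uses two facts:
\begin{enumerate}[(a)]
\item (H3) at a \emph{constant} matrix $z_0=\n b$ (Lemma~\ref{lem:equivalence of quasiconvxity});
\item the identity $\int F'(z_0)\cdot\n\varphi\,\dd x=0$ for $\varphi\in\WW^{\mbf{a},p}_0$, used in the minimality step.
\end{enumerate}
Both break down when the comparison map has non-constant $\mbf{a}$-gradient, as the limit $v$ does. So the bound ``$\strokedint_{Q_{1/2}}|\n(v_j-v)|^2+\dots\lesssim$ lower-order norms of $v_j-v$'' is unsupported, and with it the strong convergence.

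You do not need strong convergence of gradients at all. Let $P$ be the Poincar\'e polynomial of $v$ on $Q_{2\tau}$. Apply Proposition~\ref{prp caccioppoli} to $u$ on $Q_{2\tau R_j}(x_j)$ with the $\mbf{a}$-polynomial $b_j$ for which $u-b_j$ rescales to $\lambda_j(v_j-P)$. Its $\mbf a$-gradient $z_j+\lambda_j(\n v)_{Q_{2\tau}}$ is constant and, for large $j$, bounded by $M+1$. After dividing by $\lambda_j^2$, the left side dominates $\lambda_j^{-2}E(x_j,\tau R_j)$ and the right side is
\[
\sum_{0\neq\beta\le\alpha,\,\alpha\in\HH}\strokedint_{Q_{2\tau}}\Bigl(\Bigl|\frac{\p^{\alpha-\beta}(v_j-P)}{\tau^{\langle\beta,\mbf{a}^{-1}\rangle}}\Bigr|^2+\lambda_j^{p-2}\Bigl|\frac{\p^{\alpha-\beta}(v_j-P)}{\tau^{\langle\beta,\mbf{a}^{-1}\rangle}}\Bigr|^p\Bigr)\,\dd y .
\]
Apply Proposition~\ref{prp:Wa,p}(iv) to $v_j-P$ in $\WW^{\mbf a,2}$, and to $\lambda_j^{(p-2)/p}(v_j-P)$, which is bounded in $\WW^{\mbf a,p}$ and tends to $0$ in $\LL^2$. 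Then this expression converges to the quadratic term with $v$ in place of $v_j$. By Proposition~\ref{prp:poincare with perturbation} and Lemma~\ref{lemmaZeroRHSEstimate}, that term is $\lesssim\strokedint_{Q_{2\tau}}|\n v-(\n v)_{Q_{2\tau}}|^2$, which gives the contradiction. Incidentally, no Gagliardo--Nirenberg interpolation is needed in the Caccioppoli inequality: the paper keeps all terms $\p^{\alpha-\beta}(u-b)$ on the right and closes with Lemma~\ref{lem: iteration for V function}.

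\textbf{Exponents.} Your decay rate for the limit is wrong in the anisotropic geometry. Since $Q_\tau$ has side $\tau^{1/a_i}$ in direction $x_i$, the correct rate is $\strokedint_{Q_\tau}|\n v-(\n v)_{Q_\tau}|^2\lesssim\tau^{2/\max a_i}$, not $\tau^2$. For example, with $\mbf a=(1,2)$ and the functional $\mathfrak G$, the map $v(x)=(x_2^3,0)$ solves the linear system, with $\p_1 v=0$ and $\p_2^2v=(6x_2,0)$; its squared excess on $Q_\tau$ is of order $\tau$. Accordingly, the target decay must be $\tau^{(1+\theta)/\max a_i}$, as in Theorem~\ref{thm excess decay estimates}. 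A rate $\tau^{2\theta}$ would, via Lemma~\ref{lem: campanato}, give a $\delta_{\mbf a}$-H\"older exponent $\theta\max a_i$, which exceeds $1$ for $\theta$ near $1$.

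With these two corrections your blow-up is a legitimate alternative. It uses only $\LL^2$ linear theory (Lemmas~\ref{lem: quasiellipticity of F} and~\ref{lemmaZeroRHSEstimate}) plus compactness, and bypasses Theorem~\ref{propLpBoundsLinear}, but it is non-quantitative. The paper's direct approximation gives an explicit $H$ with $H(t)/t\to0$ and avoids any limit passage in the Euler--Lagrange system.
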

Here, $\delta_\mbf{a}$ and $Q_R$ are the anisotropic metric and boxes defined in Section 2.
Our proof follows a standard strategy that goes back to works by Almgren and De Giorgi on regularity of minimal surfaces. It relies on a suitable Caccioppoli
inequality and a harmonic approximation argument, where we for the latter follow the direct method of \cite{gk19,gk24}. To carry this out, we need suitable
linear estimates in the mixed smoothness setting which we collect in Section 3.

\subsection{Notation}
Our notation is mostly standard, but we mention some conventions. We write $Q_{R}(x_0)$ for the anisotropic box of radius $R$ centred at $x_0$ (see Section 2).
For any bounded measurable set $D \subset \Rn$ with Lebesgue measure $\Leb^{n}(D) = |D|>0$, we write $(v)_D=\strokedint_{D}v \dd x = \frac{1}{|D|}\int_{D}v \dd x$ to denote
the integral mean of a function $v \in \LL^{1}(D)$ on $D$ (and similar notation is used for vector-valued maps). In the case of anisotropic boxes we often
write $(v)_R := (v)_{Q_R(x_0)}$.
We will say a constant $C>1$ is universal if it only depends on $n, N,p,\mbf{a}$, and in this case we will just write $C>1$ which may vary from line to line.
We write $A \approx B$ if there exists some universal constant $C>1$ such that $C^{-1}A\leq B \leq CA$. We also write $A \lesssim_M B$ if $A \leq C_M B$,
where $C>1$ is some constant that may depend on $M$. We write $\langle \cdot, \cdot \rangle$ for the Frobenius inner product acting on $\N_{0}^n$, while we use
$z\cdot w = \mbox{Trace }(z^{T}w)$ and $|z|^{2} = z \cdot z$ to denote the usual Frobenius inner product and norm on matrices $z$, $w \in \M$.
For $\mbf{a} \in \N^n$ we write $\mbf{a}^{-1}=(1/a_1, \dots, 1/a_n)$ and $|\mbf{a}^{-1}|=\sum_{i=1}^{n} 1/a_{i}$.
\end{section}

\begin{section}{Preliminary results}
In this section we collect requisite results concerning Sobolev spaces of mixed smoothness and $\mbf{a}$-quasiconvex integrands. While all results should
be considered known we provide brief proofs when we couldn't find a precise reference in the literature.
\subsection{Anisotropic Sobolev spaces and spaces of mixed smoothness}
Such spaces have been studied extensively, and our main references are \cite{Besovintrep} and \cite{Demirussian}.
\begin{definition}\label{def:Wa,p}
Let $\Omega \subset \Rn$ be an open set, $1\leq p < \infty.$ Let $\mbf{a}=(a_1,\dots,a_n) \in \N^n$. Let $\mbf{a}^{-1}=(1/a_1,\dots,1/a_n)$.
We define the anisotropic Sobolev space
$$
\WW^{\mbf a,p}(\Omega , \RN )=\left\{u \in \LL^{p}(\Omega , \RN ): \, \partial^{\beta}u \in \LL^{p}(\Omega , \RN ) \text{ for all } \langle \beta, \mbf{a}^{-1} \rangle \leq 1 \right\}
$$
equipped with the norm
$$
\Vert u\Vert_{\WW^{\mbf a,p}(\Omega , \RN )}=\sum_{\langle \beta, \mbf{a}^{-1}\rangle \leq 1} \Vert \partial^{\beta}u \Vert_{\LL^{p}(\Omega , \RN )}.
$$
We define $\WW_0^{{\mbf a},p}(\Omega , \RN )$ as the completion of $\CC_c^{\infty}(\Omega ,\RN)$ with respect to the norm $\Vert \cdot \Vert_{\WW^{\mbf{a},p}(\Omega ,\RN)}$.
\end{definition}
Note that $a_i$ is the highest-order  derivative considered in the $e_i$-direction.
For notational simplicity, we will sometimes omit the explicit dependence on the target space and simply write $\WW^{\mbf{a},p}(\Omega)$.
We  denote the admissible set of derivatives by $\mathcal{A} = \left\{\beta \in \N_{0}^{n}: \, \langle \beta, \mbf{a}^{-1} \rangle \leq 1  \right\}$, and the derivatives lying
on the hyperplane by $\mathcal{H}=\left\{\alpha \in \N_{0}^{n}: \, \langle \alpha, \mbf{a}^{-1} \rangle =1 \right\}.$
We define the anisotropic gradient $\n u = (\p^\alpha u)_{\alpha \in \mathcal{H}}$, that is the matrix whose $i$-th row consists of all derivatives $\p^\alpha u^i$ for
$\alpha \in \mathcal{H}$, ordered lexicographically. 
When studying mixed smoothness Sobolev spaces, it is natural to restrict attention to domains having the following geometric property (see \cite[Sect.~8]{Demi}).
\begin{definition}\label{def:horn}
An $\mbf{a}$-horn is a domain of the form 
$$
\Lambda(r,\mbf{t},\mbf{s})=\bigcup_{0<\rho<r}\left\{y \in \R^n: t_j<\frac{y_j}{\rho^{1/a_j}}<s_j \text{ for all }  j=1, \dots, n\right\}
$$
where $r>0$, $0<t_j<s_j$ for all $j=1, \dots, n$.
\begin{enumerate}[(i)]
\item A subset $\omega \subset \Omega$ satisfies the $\mbf{a}$-horn condition relative to $\Omega$ if there exists an $\mbf{a}$-horn $\Lambda$ such that
  $x+\Lambda \subset \Omega $ for every $x\in \omega$.
\item A subset $\Omega \subset \R^n$ satisfies the weak $\mbf{a}$-horn condition if we can write
$$
\Omega = \bigcup_{i=1}^k \omega_i
$$
where each $\omega_i$, possibly after a coordinate reflection, satisfies the $\mbf{a}$-horn condition relative to $\Omega$.
\item A subset $\Omega \subset \Rn$ satisfies the strong $\mbf{a}$-horn condition if it satisfies the weak $\mbf{a}$-horn condition with $\omega_i$ as above,
and moreover there exists $\delta>0$ such that
$$
\Omega = \bigcup_{i=1}^k \omega_i^{\delta}
$$
where $\omega_i^{\delta}=\left\{x \in \omega_{i}: \, \text{dist}(x, \Omega \setminus \omega_i)>\delta \right\}.$
 \end{enumerate}
\end{definition}
The motivation behind this definition is that embedding theorems are typically proven via integral representations, and such representations involve integration
over $\mbf{a}$-horn regions. In practice, we will typically work with rectangular domains of the form $(l_1,r_1)\times \dots \times (l_n,r_n)$ where
$l_i, r_i \in [-\infty, \infty]$, and such domains are easily seen to satisfy the strong $\mbf{a}$-horn condition (see \cite{Besovintrep}).
Note that if $\mbf{a}=(k, \dots, k)$ is isotropic then $\mbf{a}$-horns are a special class of capped cones. 

\begin{prop}\label{prp:Wa,p}
Let $\Omega \subset \Rn$ be an open set satisfying the weak $\mbf{a}$-horn condition, and let $1<p<\infty$. 

\begin{enumerate}[(i)]
\item $\WW^{{\mbf a},p}(\Omega)$ is a separable reflexive Banach space.
\item $\Vert u \Vert_{\WW^{{\mbf a},p}(\Omega)} \approx \Vert u \Vert_{\LL^{p}(\Omega)}+\sum_{i=1}^n \Vert \partial^{a_i}_iu\Vert_{\LL^{p}(\Omega)}$ defines an equivalent norm on
  $\WW^{\mbf{a},p}(\Omega)$, where the implied constants depend on $\Omega, n, p, \mbf{a}$.
\item $\CC^\infty(\Omega) \cap \WW^{\mbf{a},p}(\Omega)$ is dense in $\WW^{{\mbf a},p}(\Omega)$.
\item Assume $\Omega$ is bounded with Lipschitz boundary. If $u_j \wc u$ weakly in $\WW^{{\mbf a},p}(\Omega)$ then $\partial^\beta u_j \rightarrow \partial^\beta u$
  strongly in $\LL^p(\Omega)$ for every $\langle \beta, {\mbf a}^{-1}\rangle<1$. 
\item If $\Omega$ satisfies the strong $\mbf{a}$-horn condition, then there exists a bounded linear extension operator
  $\WW^{{\mbf{a}},p}(\Omega) \rightarrow \WW^{{\mbf{a}},p}(\Rn ).$
\end{enumerate}
\end{prop}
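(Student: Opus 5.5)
The statement to prove is Proposition \ref{prp:Wa,p}, the collection of basic structural facts about $\WW^{\mbf{a},p}(\Omega)$. The plan is to reduce each item either to standard functional analysis or to the integral representation theory of Besov--Il'in--Nikolskii \cite{Besovintrep}, which is tailored to domains with the horn conditions of Definition \ref{def:horn}.

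\textbf{Item (i).} Consider the isometry $u \mapsto (\partial^\beta u)_{\beta\in\mathcal{A}}$ into the finite product $\prod_{\beta\in\mathcal{A}}\LL^p(\Omega,\RN)$. Its image is closed, because distributional derivatives are preserved under $\LL^p$-limits. A closed subspace of a separable reflexive space is separable and reflexive, so (i) follows for $1<p<\infty$.

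\textbf{Item (iii).} I would run the Meyers--Serrin argument verbatim. Take a locally finite partition of unity $(\psi_k)$ subordinate to an exhaustion of $\Omega$, and mollify each $\psi_k u$ at a scale small enough that the $\WW^{\mbf{a},p}$-error is below $\ep 2^{-k}$. The Leibniz rule only produces derivatives $\partial^\gamma u$ with $\gamma\le\beta$ componentwise. Since $\gamma\le\beta$ implies $\langle\gamma,\mbf{a}^{-1}\rangle\le\langle\beta,\mbf{a}^{-1}\rangle\le 1$, these all lie in $\LL^p$. No geometric condition is needed for this item.

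\textbf{Item (ii).} One inequality is trivial. For the other, I would invoke the integral representation on domains with the weak $\mbf{a}$-horn condition \cite[Sect.~8]{Besovintrep}. For $x\in\omega_i$, $\partial^\beta u(x)$ is written as a sum of integrals over $x+\Lambda$: one term involves $u$ against a smooth kernel, and the others involve $\partial_j^{a_j}u$ against kernels of the form $\rho^{-1-|\mbf{a}^{-1}|+\langle\beta,\mbf{a}^{-1}\rangle}\,\Phi_j\bigl((y-x)/\rho^{\mbf{a}^{-1}}\bigr)$. When $\langle\beta,\mbf{a}^{-1}\rangle<1$ these kernels are integrable. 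When $\langle\beta,\mbf{a}^{-1}\rangle=1$ they give anisotropic Calder\'on--Zygmund singular integrals, bounded on $\LL^p$ for $1<p<\infty$. Summing over the finitely many $\omega_i$ gives $\|\partial^\beta u\|_p\lesssim \|u\|_p+\sum_j\|\partial_j^{a_j}u\|_p$. The representation is first proved for smooth $u$ and transferred by density (iii), so I would establish (iii) before (ii). The $\LL^p$-boundedness of the endpoint singular integrals, with constants depending on the horn, is the main obstacle. Here I would rely on the cited theory rather than reprove it. On $\Rn$ alone, one could instead use Fourier multipliers: $\xi^\beta/(1+\sum_j|\xi_j|^{a_j})$ is an anisotropic Mikhlin multiplier.

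\textbf{Item (v).} The extension operator would be built in Besov's way, by reflecting across the horn directions of each $\omega_i$ with an anisotropic mollified averaging, and then gluing with a partition of unity adapted to the sets $\omega_i^\delta$. The $\delta$-separation in the strong horn condition is exactly what bounds the cutoff derivatives; the estimates again come from the representation formulas. For rectangular domains, which is all we use later, this can be done directly by iterated higher-order reflections $\sum_k c_k u(\dots,-kx_j,\dots)$, one coordinate at a time. Here the coefficients $c_k$ are chosen so that derivatives up to order $a_j$ match across the face.

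\textbf{Item (iv).} Given a weakly convergent sequence, use (v) (a bounded Lipschitz domain satisfies the strong horn condition locally) to extend it to a sequence bounded in $\WW^{\mbf{a},p}(\Rn)$ with support in a fixed ball. For $\langle\beta,\mbf{a}^{-1}\rangle<1$, the representation gives $\partial^\beta$ of a bounded set as the image of a compact operator: the kernels are integrable with a gain, so the translation moduli are uniformly small. Equivalently, there is an anisotropic Kolmogorov--Riesz estimate $\|\tau_h\partial^\beta u-\partial^\beta u\|_p\lesssim |h|_{\mbf a}^{1-\langle\beta,\mbf{a}^{-1}\rangle}\|u\|_{\WW^{\mbf a,p}}$. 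Hence $\partial^\beta u_j$ is precompact in $\LL^p(\Omega)$. Weak convergence identifies every limit point as $\partial^\beta u$, so the whole sequence converges strongly.
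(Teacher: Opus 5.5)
The paper gives no argument for this proposition; it quotes the facts from the anisotropic literature (\cite{Besovintrep}, \cite{Pros2}). Your treatment of (i), (iii), (ii) and (v) follows that standard route (closed subspace of a finite product of $\LL^p$ spaces, Meyers--Serrin, horn integral representations plus anisotropic Calder\'on--Zygmund theory, Besov's extension or higher-order reflections for boxes).

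There is one slip in (ii). The kernel paired with $\p_j^{a_j}u$ must carry the factor $\rho^{-|\mbf{a}^{-1}|-\langle\beta,\mbf{a}^{-1}\rangle}$, integrated in $\dd\rho$, as in the kernels $H^\beta_{i,\tau_1,\tau_2}$ at the end of the paper. With the exponent you wrote, the slice $\LL^1$-norms are $\rho^{-1+\langle\beta,\mbf{a}^{-1}\rangle}$, and the dichotomy between $\langle\beta,\mbf{a}^{-1}\rangle<1$ and $=1$ that you state does not come out.

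The genuine problem is in (iv). There you reduce to (v) by asserting that a bounded Lipschitz domain satisfies the strong $\mbf{a}$-horn condition locally. This is false once $\mbf{a}$ is anisotropic, because horns may only be reflected in coordinates, never rotated. For $\mbf{a}=(1,2)$, the slice of any (reflected) horn $\Lambda(r,\mbf{t},\mbf{s})$ at level $\rho$ satisfies $|y_1|<s_1\rho$ and $|y_2|>t_2\rho^{1/2}$. So every horn is tangent to the $x_2$-axis at its vertex. Now take the diamond $\{|x_1|+|x_2|<1\}$ and put $x=(1-d,0)$. Every point of the slice of $x+\Lambda$ at level $\rho=d$ has $|x_1+y_1|+|y_2|>1-(1+s_1)d+t_2d^{1/2}\geq 1$ as soon as $d<r$ and $d\leq t_2^2/(1+s_1)^2$. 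Hence $x+\Lambda\not\subset\Omega$. This convex domain fails even the weak horn condition near $(\pm1,0)$, so Lipschitz regularity gives you nothing toward (v). In (iv) you do have the weak horn condition from the standing hypothesis. However, (v) needs the strong one, and you have not shown that upgrade.

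The detour through (v) is unnecessary. Run the compactness argument on each $\omega_i$ directly with the representation you already use in (ii). Its kernels are supported in $\Lambda$, so only values in $\Omega$ enter. For $\langle\beta,\mbf{a}^{-1}\rangle<1$, split the $\rho$-integral at $\rho=\delta$.
\begin{enumerate}[(a)]
\item The part over $(0,\delta)$ is a convolution with a kernel of $\LL^1$-norm $\lesssim\delta^{1-\langle\beta,\mbf{a}^{-1}\rangle}$. It is therefore uniformly small along your bounded sequence.
\item The rest (the term with $u_j$ and the range $\rho\in(\delta,r)$) consists of convolutions of $\mbf{1}_\Omega u_j$ and $\mbf{1}_\Omega\p_k^{a_k}u_j$ with smooth, compactly supported kernels. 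These are uniformly bounded and equicontinuous on the bounded set $\omega_i$, hence precompact in $\LL^p(\omega_i)$ by Arzel\`a--Ascoli.
\end{enumerate}
Thus $\{\p^\beta u_j\}$ is totally bounded in $\LL^p(\Omega)$, and your identification of the limit finishes the proof. This is the standard compactness argument for bounded domains with the weak horn condition. It needs neither an extension operator nor the Lipschitz assumption.
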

We refer to \cite{Pros2} for a more detailed survey of the above results. 
\begin{definition}\label{def:box}
An anisotropic box is a set of the form
$$
Q_R = Q_R(x_0) := \bigl\{x \in \Rn : |x_i-(x_0)_i|^{a_i} <R \text{ for all } 1\leq i \leq n \bigr\}
$$
\end{definition}
For scaling reasons, anisotropic boxes will replace usual cubes (or balls) in all of our estimates.
Note that anisotropic boxes satisfy the strong $\mbf{a}$-horn condition. We can define the  anisotropic metric on $\Rn$ by

\begin{equation}\label{anisotropic metric}
  \delta_{\mbf{a}}(x,y)= \sup_{1\leq j \leq n}|x_j-y_j|^{a_j/{\max a_i}}.
\end{equation}
If we denote by $B^{\mbf{a}}_R(x_0) \subset \Rn$ the open ball of radius $R$ with centre $x_0$ with respect to the anisotropic metric, then it is easy to see that
$Q_R(x_0)=B^{\mbf{a}}_{R^{1/\max a_i}}(x_0)$. We observe that $|Q_R|=2^nR^{|\mbf{a}^{-1}|}$, from which it follows $(\Rn , \delta_\mbf{a}, \Leb^{n} )$ is a
doubling metric measure space. Consequently the Lebesgue differentiation theorem holds for anisotropic boxes, in the sense that for exponents $p \in [1,\infty )$ and 
any representative of $f \in \LL^{p}_{\loc}(\Rn )$ we have
\begin{equation}
 \lim_{R \searrow 0}\strokedint_{Q_R(x_0)} \! |f(x)-f(x_0)|^{p} \, \dd x=0
\end{equation}
for almost every $x_0 \in \Rn$.

Working with anisotropic boxes, we can state the following scaled version of the Sobolev embedding theorem (see \cite{Besovintrep}).

\begin{prop}\label{prp:sobolev embedding}
Let $u \in \WW^{{\mbf a},p}(Q_R)$ and $\langle \beta, {\mbf a}^{-1} \rangle<1$. For $1<p<q<\infty$, write
$\chi_{\beta} :=\langle \beta, {\mbf a}^{-1} \rangle+(1/p - 1/q)|{\mbf a}^{-1}|$. If $\chi_\beta \leq 1$ then
$\partial ^\beta u \in \LL^{q}(Q_R)$ and there exists $C=C(n,N, {\mbf a}, \beta, p,q)$ such that
\begin{equation}
\Vert \partial^\beta u \Vert_{\LL^{q}(Q_R)} \leq CR^{1-\chi_\beta} \sum_{i=1}^n \Vert \partial^{a_i}_iu \Vert_{\LL^{p}(Q_R)} + CR^{-\chi_\beta}\Vert u \Vert_{\LL^{p}(Q_R)}.
\end{equation}
\end{prop}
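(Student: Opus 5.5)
The estimate is scale invariant, so I would reduce it to the unit box by anisotropic dilation and then invoke the unscaled anisotropic embedding theorem on a fixed domain.

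\emph{Normalisation.} By translation invariance we may assume $x_0=0$. Define the anisotropic dilation
$$
T_R(y)=(R^{1/a_1}y_1,\dots ,R^{1/a_n}y_n),
$$
which maps $Q_1(0)$ bijectively onto $Q_R(0)$ with Jacobian $R^{|\mbf{a}^{-1}|}$. Set $v=u\circ T_R$ on $Q_1$. By the chain rule,
$$
\partial^\beta v=R^{\langle \beta,\mbf{a}^{-1}\rangle}(\partial^\beta u)\circ T_R ,
$$
and in particular $\partial_i^{a_i}v=R\,(\partial_i^{a_i}u)\circ T_R$. Changing variables gives
$$
\Vert \partial^\beta v\Vert_{\LL^q(Q_1)}=R^{\langle\beta,\mbf{a}^{-1}\rangle-|\mbf{a}^{-1}|/q}\Vert\partial^\beta u\Vert_{\LL^q(Q_R)},
$$
and similarly
$$
\Vert \partial_i^{a_i}v\Vert_{\LL^p(Q_1)}=R^{1-|\mbf{a}^{-1}|/p}\Vert \partial_i^{a_i}u\Vert_{\LL^p(Q_R)},\qquad
\Vert v\Vert_{\LL^p(Q_1)}=R^{-|\mbf{a}^{-1}|/p}\Vert u\Vert_{\LL^p(Q_R)} .
$$
All membership statements ($v\in\WW^{\mbf{a},p}(Q_1)$ and so on) transfer in the same way.

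\emph{Unit-scale estimate.} On the fixed box $Q_1$, which satisfies the strong $\mbf{a}$-horn condition, I would use the classical anisotropic embedding theorem of Besov--Il'in--Nikolskii: if $\chi_\beta\le 1$, then
$$
\Vert\partial^\beta v\Vert_{\LL^q(Q_1)}\le C\Vert v\Vert_{\WW^{\mbf{a},p}(Q_1)} .
$$
Proposition \ref{prp:Wa,p}(ii) then replaces the full norm by the equivalent norm, giving
$$
\Vert\partial^\beta v\Vert_{\LL^q(Q_1)}\le C\Bigl(\Vert v\Vert_{\LL^p(Q_1)}+\sum_i\Vert\partial_i^{a_i}v\Vert_{\LL^p(Q_1)}\Bigr),
$$
with $C$ depending only on $n,N,\mbf{a},\beta,p,q$. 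If one prefers a self-contained route, Proposition \ref{prp:Wa,p}(v) extends $v$ to $\Rn$. The whole-space embedding then follows from the integral representation of $\partial^\beta v$ over $\mbf{a}$-horns combined with an anisotropic Hardy--Littlewood--Sobolev inequality. This is where the condition $\chi_\beta\le1$ enters.

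\emph{Rescaling back.} Substituting the scaling identities and multiplying through by $R^{-\langle\beta,\mbf{a}^{-1}\rangle+|\mbf{a}^{-1}|/q}$, the powers of $R$ are:
$$
\text{on } \partial_i^{a_i}u:\quad 1-\langle\beta,\mbf{a}^{-1}\rangle-(1/p-1/q)|\mbf{a}^{-1}|=1-\chi_\beta ,
$$
$$
\text{on } u:\quad -\langle\beta,\mbf{a}^{-1}\rangle-(1/p-1/q)|\mbf{a}^{-1}|=-\chi_\beta .
$$
This is exactly the claimed inequality.

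\emph{Main obstacle.} The only nontrivial ingredient is the unit-box embedding in the borderline case $\chi_\beta=1$. There the horn-representation argument needs the sharp anisotropic fractional integration bound (HLS in the doubling space $(\Rn,\delta_{\mbf{a}},\Leb^n)$), rather than a plain Young-inequality estimate. I would cite \cite{Besovintrep} for this step. Uniformity of the constant in $R$ is automatic from the dilation argument.
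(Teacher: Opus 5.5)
Your proposal is correct and is essentially the paper's argument: the paper also takes the unit-box embedding from Besov--Il'in--Nikolskii and transfers it to $Q_R(x_0)$ via the dilation $v(x)=u(x_0+R\odot x)$, using the same scaling identity $\int_{Q_1}|\partial^\beta v|^p\,\mathrm{d}x=R^{p\langle\beta,\mbf{a}^{-1}\rangle-|\mbf{a}^{-1}|}\int_{Q_R}|\partial^\beta u|^p\,\mathrm{d}x$. Your exponent bookkeeping, which yields $R^{1-\chi_\beta}$ and $R^{-\chi_\beta}$, and your appeal to Proposition \ref{prp:Wa,p}(ii) for the reduced norm make explicit what the paper leaves implicit.
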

Let us remark briefly how one obtains the scaled version, since scaling arguments will be implicitly used throughout the paper.
Indeed, assume the inequality holds in the case $R=1$ and $x_{0}=0$. Then for $u \in \WW^{\mbf{a},p}(Q_{R}(x_{0}))$ we set $v(x)=u(x_0+R \odot x)$ where
$R \odot x = (R^{1/a_1 }x_1,\dots, R^{1/a_n}x_n)$. Then $v \in \WW^{\mbf{a},p}(Q_{1}(0))$, so we apply the inequality to $v$.
Since we can relate the norms of $v$ and $u$ by
$$
\int_{Q_{1}(0)} \! |\p^\beta v|^{p} \, \dd x =R^{p\langle \beta,\mbf{a}^{-1} \rangle-|\mbf{a}^{-1}|} \int_{Q_{R}(x_{0})} \! |\p^\beta u|^{p} \, \dd x
$$
the inequality for $u$ readily follows. 

We will also need a version of the Poincaré inequality in the mixed smoothness setting.
In the isotropic case  $\mbf{a}=(m, \dots, m)$, this is often also referred to as the Bramble--Hilbert lemma.

\begin{prop}\label{prp:poincare with perturbation}
Let $1 <p<\infty$ and write $Q_{R}=Q_{R}(x_{0})$.
There exists $C=C(n,N,\mbf{a},p)$ such that for any $u \in \WW^{\mbf{a},p}(Q_R)$ and any constant $z_0 \in \M$, there
exists some polynomial $P$ with $\n P=z_0$ such that
$$
\Vert \partial^\beta( u-P) \Vert_{\LL^{p}(Q_R)} \leq CR^{1-\langle \beta, {{\mbf a}}^{-1} \rangle } \Vert \nab_{{\mbf a}}u-z_0 \Vert_{\LL^{p}(Q_R)}
$$ 
for all $\langle \beta, {{\mbf a}}^{-1}\rangle \leq 1.$ 
\end{prop}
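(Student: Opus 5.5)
We reduce to the unit box $Q_1(0)$ by the anisotropic scaling $v(x)=u(x_0+R\odot x)$ described after Proposition~\ref{prp:sobolev embedding}. It maps $Q_R(x_0)$ onto $Q_1(0)$ and multiplies $\|\partial^\beta u\|_{\LL^p(Q_R)}$ by $R^{\langle\beta,\mbf a^{-1}\rangle-|\mbf a^{-1}|/p}$. The claimed inequality is homogeneous under this scaling, with the factor $R^{1-\langle\beta,\mbf a^{-1}\rangle}$ produced exactly. We may also replace $u$ by $u-P_0$, where $P_0$ is any polynomial with $\n P_0=z_0$. For instance, take $P_0(x)=\sum_{\alpha\in\mathcal H}(z_0)_{\cdot,\alpha}x^\alpha/\alpha!$; since distinct $\alpha\in\mathcal H$ have $\partial^{\alpha'}x^\alpha/\alpha! = \delta_{\alpha\alpha'}$ for $\alpha'\in\mathcal H$, we get $\n P_0=z_0$. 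After this reduction it suffices to show: for every $w\in\WW^{\mbf a,p}(Q_1)$ there is a polynomial $P$ with $\n P=0$ and
$$\sum_{\beta\in\mathcal A}\|\partial^\beta(w-P)\|_{\LL^p(Q_1)}\le C\|\n w\|_{\LL^p(Q_1)}.$$

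Let $\mathcal P$ be the finite-dimensional space of polynomials spanned by monomials $x^\beta$ with $\langle\beta,\mbf a^{-1}\rangle<1$. Every such monomial satisfies $\n x^\beta=0$, since each $\alpha\in\mathcal H$ has $\alpha\not\le\beta$: otherwise $1=\langle\alpha,\mbf a^{-1}\rangle\le\langle\beta,\mbf a^{-1}\rangle<1$. Conversely, we must identify the kernel: if $w\in\WW^{\mbf a,p}(Q_1)$ and $\n w=0$, then in particular $\partial_i^{a_i}w=0$ for all $i$. Hence $w$ is a polynomial of degree $<a_i$ in each variable, and the remaining conditions $\partial^\alpha w=0$ for $\alpha\in\mathcal H$ force $w\in\mathcal P$. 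To see the latter, note that a monomial $x^\gamma$ with $\langle\gamma,\mbf a^{-1}\rangle\ge1$ dominates some $\alpha\in\mathcal H$. I expect this to be the delicate combinatorial point, since the lattice points on the hyperplane need not be "dense" enough. A safer alternative is to define $\mathcal P:=\{P \text{ polynomial}: \n P=0,\ \deg_{x_i}P<a_i\}$ directly as the kernel, which avoids any combinatorics.

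Define $P$ as the projection of $w$ onto $\mathcal P$ that is determined by the moment conditions $\int_{Q_1}\partial^\beta(w-P)\,\dd x=0$ for a unisolvent family of $\beta$'s. Simplest is to use an $\LL^2(Q_1)$-orthogonal projection $\pi$ onto $\mathcal P$, which is bounded on $\LL^p(Q_1)$ because $\mathcal P$ is finite dimensional. Then argue by contradiction in the standard way. Suppose $w_k$ satisfy $\pi w_k=0$, $\|w_k\|_{\WW^{\mbf a,p}(Q_1)}=1$ and $\|\n w_k\|_{\LL^p}\to0$. By Proposition~\ref{prp:Wa,p}(i) we may pass to a weakly convergent subsequence $w_k\wc w$. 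By Proposition~\ref{prp:Wa,p}(iv) (boxes are bounded Lipschitz) we get $\partial^\beta w_k\to\partial^\beta w$ strongly in $\LL^p$ for $\beta\notin\mathcal H$, and $\n w_k\to0$ strongly. Hence $w_k\to w$ strongly in $\WW^{\mbf a,p}$, with $\n w=0$ and $\pi w=0$. So $w\in\mathcal P\cap\ker\pi=\{0\}$, contradicting $\|w\|_{\WW^{\mbf a,p}}=1$.

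This gives the estimate $\|w-\pi w\|_{\WW^{\mbf a,p}(Q_1)}\le C\|\n w\|_{\LL^p(Q_1)}$. Taking $P=P_0+\pi(u-P_0)$ in scaled form and scaling back yields the proposition. The main obstacle is the identification of the kernel of $\n$ on $\WW^{\mbf a,p}(Q_1)$ as a finite-dimensional space of polynomials; the degree bound comes from $\partial_i^{a_i}w=0$, using the distributional fact that vanishing of a high pure derivative forces polynomial dependence in that variable, applied iteratively.
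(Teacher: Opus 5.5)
Your compactness argument is correct, but only with your second definition of $\mathcal{P}$. The first definition must be discarded rather than merely flagged, because the combinatorial claim behind it is false. For $\mbf{a}=(2,3)$ the pattern of homogeneity is $\HH=\{(2,0),(0,3)\}$. The monomial $x_1x_2^2$ satisfies $\n(x_1x_2^2)=(\p_1^2 (x_1x_2^2),\p_2^3(x_1x_2^2))=0$, even though $\langle (1,2),\mbf{a}^{-1}\rangle=7/6>1$. So with $\mathcal{P}=\mathrm{span}\{x^\beta:\langle\beta,\mbf{a}^{-1}\rangle<1\}$, the limit $w$ in your contradiction argument need not lie in $\mathcal{P}$. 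Indeed the estimate $\|w-\pi w\|_{\WW^{\mbf{a},p}}\leq C\|\n w\|_{\LL^p}$ fails outright for $w=x_1x_2^2$.

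The right space is the full kernel: the span of those $x^\gamma$ for which $\gamma\geq\alpha$ (componentwise) holds for no $\alpha\in\HH$. It has the three properties you need:
\begin{itemize}
\item it is finite dimensional, since $\gamma_i<a_i$ (use $\alpha=a_ie_i$);
\item it is annihilated by $\n$;
\item it contains every $w\in\WW^{\mbf{a},p}(Q_1)$ with $\n w=0$. Your argument via $\p_i^{a_i}w=0$ makes $w$ a polynomial, and $\p^\alpha$ maps the monomials $x^\gamma$ with $\gamma\geq\alpha$ injectively to nonzero monomials, so $\p^\alpha w=0$ kills exactly those monomials.
\end{itemize}
With this $\mathcal{P}$, the rest of your argument goes through: the normalization $\pi w_k=0$, the use of Proposition~\ref{prp:Wa,p}(i),(iv), the reduction by $P_0$, and the scaling. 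The constant depends only on $n,N,\mbf{a},p$.

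The paper argues differently. It quotes an explicit representation $u-P_u=\sum_{\alpha\in\HH}\int_{Q_1(0)}k_\alpha(\cdot,y)\p^\alpha u(y)\,\dd y$ with Riesz-type kernels $|k_\alpha(x,y)|\leq C|x-y|^{|\alpha|-n}$. It bounds $\|u-P_u\|_{\LL^p}$ by the $\LL^p$-boundedness of Riesz potentials, then recovers the intermediate derivatives from the equivalent norm in Proposition~\ref{prp:Wa,p}(ii).

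That route is constructive. $P_u$ is an explicit averaged-Taylor-type polynomial, the constant is in principle computable, and the correct incomplete polynomial space is built into the representation, so no separate kernel identification is needed.

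Your route is soft and self-contained given Proposition~\ref{prp:Wa,p}(i),(iv); it needs no representation formula or potential estimates. The costs are a non-explicit constant and reliance on reflexivity (harmless here, since $1<p<\infty$). It also concentrates all the anisotropic combinatorics in identifying $\ker\n$, which is exactly where your first attempt went wrong.
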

A constructive proof (for $x_{0}=0$, $R=1$) follows from \cite[Theorem 4.2]{Dupnot}, where $P$ is constructed in such a way that
$$
(u-P_u)(x)=\sum_{\alpha \in \mathcal{H}} \int_{Q_{1}(0)} \! k_\alpha(x,y) \p^\alpha u(y) \, \dd y
$$
where $|k_\alpha(x,y)| \leq C|x-y|^{|\alpha|-n}$ and $k_\alpha(x, \cdot)$ has compact support in $Q_{1}(0)$ for all $x \in Q_{1}(0)$.
Thus, Proposition \ref{prp:poincare with perturbation} follows from the $\LL^p$ boundedness of the Riesz potentials and Proposition \ref{prp:Wa,p}(ii).
Finally, we define Campanato spaces adapted to our scaling, which for simplicity we only need when the domain is an anisotropic box $Q$.
\begin{definition}\label{def: campanato}
Fix an anisotropic box $Q$ and let $I(x_0,r)=Q_r(x_0) \cap Q$. For $\lambda \geq 0$ we define the anisotropic Campanato space 
$$
\mathfrak{L}^{p,\lambda}_{\mbf{a}}(Q)=\left\{u \in \LL^{p}(Q): \, \sup_{x_0 \in Q, r>0} |I(x_0,r)|^{-\lambda } \int_{I(x_0,r)} \! |u-(u)_{I(x_0,r)}|^{p} \, \dd x < \infty\right\}.
$$
\end{definition}
The following embedding can be found in \cite[Definition 2.4]{GiustiItalian}.

\begin{lemma}\label{lem: campanato}
If $\lambda >1$ then $\mathfrak{L}^{p,\lambda}_{\mbf{a}}(Q) \cong \CC^{0, \chi}_{\delta_{\mbf{a}}}(Q)$ with $\chi = \frac{\max a_i |\mbf{a}^{-1}|}{p}(\lambda-1)$.
In particular, functions $u \in \mathfrak{L}_\mbf{a}^{p, \lambda}(Q)$ are Hölder continuous in direction $x_j$ with exponent $\frac{a_j}{\max a_i}\chi$. 
\end{lemma}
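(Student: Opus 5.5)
The plan is to run the classical Campanato argument in the doubling metric measure space $(Q,\delta_{\mbf a},\Leb^n)$. The only new ingredient is the bookkeeping between the box parameter $r$ and the metric radius $\rho=r^{1/\max a_i}$, using $Q_r(x_0)=B^{\mbf a}_{\rho}(x_0)$.

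\emph{Step 1 (measure density and scaling).} First I will check that there is $c=c(n,\mbf a)>0$ with
$$
c\,r^{|\mbf a^{-1}|}\le |I(x_0,r)|\le 2^n r^{|\mbf a^{-1}|}
$$
for all $x_0\in Q$ and $0<r\le r_Q$, where $r_Q$ is the radius of $Q$. The reason is that $Q$ and $Q_r(x_0)$ are products of intervals. In each coordinate, an interval of half-length $r^{1/a_i}$ centred at a point of an interval of half-length $r_Q^{1/a_i}\ge r^{1/a_i}$ meets it in length at least $r^{1/a_i}$. For $r>r_Q$ we simply have $I(x_0,r)=Q$, so those scales only contribute a bounded term.

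Let $K$ denote the Campanato seminorm. Writing $u_r(x_0)=(u)_{I(x_0,r)}$, the definition gives
$$
\Bigl(\strokedint_{I(x_0,r)}|u-u_r(x_0)|^p\Bigr)^{1/p}\le K^{1/p}|I(x_0,r)|^{(\lambda-1)/p}\lesssim K^{1/p}r^{|\mbf a^{-1}|(\lambda-1)/p}=K^{1/p}\rho^{\chi}.
$$
This is exactly where the exponent $\chi=\max a_i|\mbf a^{-1}|(\lambda-1)/p$ comes from.

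\emph{Step 2 (Cauchy property of averages).} Since $I(x_0,r/2)\subset I(x_0,r)$ with comparable measures by Step 1, the triangle inequality and Hölder's inequality give
$$
|u_{r/2}(x_0)-u_r(x_0)|\lesssim K^{1/p}\rho^{\chi}.
$$
Dyadic radii in the $r$-variable correspond to geometric radii $2^{-k/\max a_i}\rho$ in $\rho$, so $\sum_k (2^{-k}r)^{\chi/\max a_i}$ is a convergent geometric series because $\lambda>1$. Hence $u_{2^{-k}r}(x_0)$ converges uniformly in $x_0$. By the Lebesgue differentiation theorem on the doubling space (recalled above), the limit equals $u(x_0)$ a.e., and I will use this limit as the continuous representative. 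Summing the series also gives
$$
|u(x_0)-u_r(x_0)|\lesssim K^{1/p}\rho^{\chi}.
$$

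\emph{Step 3 (Hölder estimate).} Take $x,y\in Q$ with $\delta_{\mbf a}(x,y)=\rho$ and set $r=\rho^{\max a_i}$. Then
$$
Q_r(y)\cap Q\;\subset\; I(x,2^{\max a_i}r)\quad\text{(up to a universal factor in the radius)},
$$
by the quasi-triangle inequality for $\delta_{\mbf a}$. Both $I(y,r)$ and $I(x,r)$ sit inside $I(x,Cr)$ with comparable measure, so
$$
|u_r(x)-u_r(y)|\lesssim K^{1/p}\rho^\chi .
$$
Combining this with Step 2 gives $|u(x)-u(y)|\lesssim K^{1/p}\delta_{\mbf a}(x,y)^\chi$. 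Together with $\sup|u|\lesssim \|u\|_{\LL^p}+K^{1/p}$, this yields the continuous embedding of $\mathfrak L^{p,\lambda}_{\mbf a}(Q)$ into $\CC^{0,\chi}_{\delta_{\mbf a}}(Q)$.

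\emph{Step 4 (converse and directional Hölder bound).} Conversely, if $u\in\CC^{0,\chi}_{\delta_{\mbf a}}$, then $|u-(u)_I|\le [u]_\chi (2\rho)^\chi$ on $I(x_0,r)$. Hence
$$
|I|^{-\lambda}\int_I|u-(u)_I|^p\lesssim r^{|\mbf a^{-1}|(1-\lambda)}\rho^{p\chi}=\text{const},
$$
which proves the isomorphism. The directional statement then follows directly: $\delta_{\mbf a}(x,x+te_j)=|t|^{a_j/\max a_i}$, so $|u(x+te_j)-u(x)|\lesssim |t|^{a_j\chi/\max a_i}$.

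The main obstacle I expect is Step 1 and its use in Step 3, namely the uniform lower bound on $|I(x_0,r)|$ near $\partial Q$ and the containment of intersected boxes under the quasi-metric. Everything else is the standard Campanato proof with $\rho^{\chi}$ in place of $r^{\alpha}$.
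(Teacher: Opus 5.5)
Your argument is correct. It is the standard Campanato proof: the measure density of $I(x_0,r)$, telescoping of averages over nested boxes, comparison of $I(x,r)$ and $I(y,r)$ inside $I(x,2^{\max a_i}r)$, and the oscillation bound for the converse. The paper gives no proof of its own; it simply cites Giusti's anisotropic Campanato theory, which proceeds in essentially this way.

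The only slip is in Step 1. The identity $I(x_0,r)=Q$ holds only for $r\ge 2^{\max a_i}r_Q$, not for all $r>r_Q$. This is harmless, since $|I(x_0,r)|\ge |I(x_0,r_Q)|\ge r_Q^{|\mbf a^{-1}|}$ for $r\ge r_Q$. At those scales in Step 4 you should bound the oscillation by $[u]_\chi\,\mathrm{diam}_{\delta_{\mbf a}}(Q)^\chi$ rather than by $(2\rho)^\chi$.
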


\subsection{$\mbf{a}$-quasiconvexity}
The notion was introduced in \cite{kazstowoj17} and further developed in \cite{Pros,Pros2}.
\begin{definition}
A continuous integrand $F \colon \M \to \R$ is $\mbf{a}$-quasiconvex at $z_0 \in \M$ if
$$
F(z_0) \leq \strokedint_{Q_{1}(0)} \! F(z_0+\n \phi) \, \dd x
$$
for all $\phi \in \CC_c^{\infty}(Q_{1}(0) , \RN )$. 
We say $F$ is $\mbf{a}$-quasiconvex if it is $\mbf{a}$-quasiconvex at all $z_0 \in \M$. 
\end{definition}
As in the case of quasiconvexity in the sense of Morrey, the definition is independent of the choice of domain $Q_{1}(0)$ \cite{Pros2}.


The following proposition will be useful later in section 3. In its statement $( \mbf{e}_{\alpha} )_{\alpha \in \mathcal{H}}$ denotes the canonical basis for $\R^m$,
where we recall that the pattern of homogeneity, $\HH$, has $m$ elements and is ordered lexicographically.

\begin{prop}\label{propLegendreHadamard}
Suppose that $F \colon \M \to \R$ is $\CC^2$ and $\mbf{a}$-quasiconvex at $z \in \M$. Then 
\begin{eqnarray}
0 &\leq& F^{\prime \prime}(z)\bigl[ v \otimes \sum_{\alpha \in \mathcal{H}_{0}} \xi^{\alpha}\mbf{e}_{\alpha}, v \otimes \sum_{\alpha \in \mathcal{H}_{0}} \xi^{\alpha}\mbf{e}_{\alpha} \bigr] +
F^{\prime \prime}(z)\bigl[ v \otimes \sum_{\alpha \in \mathcal{H}_{2}} \xi^{\alpha}\mbf{e}_{\alpha}, v \otimes \sum_{\alpha \in \mathcal{H}_{2}} \xi^{\alpha}\mbf{e}_{\alpha} \bigr]\label{arc}\\
&& -2\sum_{\alpha \in \HH_{0}, \beta \in \HH_{2}} F^{\prime \prime}(z)\bigl[ v \otimes \xi^{\alpha}\mbf{e}_{\alpha}, v\otimes \xi^{\beta}\mbf{e}_{\beta} \bigr]\nonumber\\
&& +F^{\prime \prime}(z)\bigl[ v \otimes \sum_{\alpha \in \mathcal{H}_{1}} \xi^{\alpha}\mbf{e}_{\alpha}, v \otimes \sum_{\alpha \in \mathcal{H}_{1}} \xi^{\alpha}\mbf{e}_{\alpha} \bigr] +
F^{\prime \prime}(z)\bigl[ v \otimes \sum_{\alpha \in \mathcal{H}_{3}} \xi^{\alpha}\mbf{e}_{\alpha}, v \otimes \sum_{\alpha \in \mathcal{H}_{3}} \xi^{\alpha}\mbf{e}_{\alpha} \bigr]\nonumber\\
&& -2\sum_{\alpha \in \HH_{1}, \beta \in \HH_{3}} F^{\prime \prime}(z)\bigl[ v \otimes \xi^{\alpha}\mbf{e}_{\alpha}, v\otimes \xi^{\beta}\mbf{e}_{\beta} \bigr]\nonumber
\end{eqnarray}
holds for all $v \in \RN$ and $\xi \in \Rn$, where $\mathcal{H}_{j} = \bigl\{ \alpha \in \mathcal{H} : \, | \alpha | \equiv j \mbox{ mod } 4 \bigr\}$ for $j=0,1,2,3$,
and we use the convention that an empty sum is $0$.
\end{prop}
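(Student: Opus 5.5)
The plan is to linearize the $\mbf{a}$-quasiconvexity inequality at $z$ and then test the resulting quadratic inequality with oscillating plane waves, as in the classical derivation of the Legendre--Hadamard condition from quasiconvexity. Because the test function must be real-valued, the waves have to be real parts of complex exponentials. This is exactly why the four residue classes of $|\alpha|$ modulo $4$ show up.

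\textbf{Step 1: second variation.} Fix $\phi \in \CC_c^\infty(Q_1(0),\RN)$. Quasiconvexity at $z$ gives
$$
g(t) := \int_{Q_1(0)} \! \bigl( F(z+t\n\phi) - F(z) \bigr) \, \dd x \geq 0 = g(0)
$$
for all $t \in \R$. Since $F$ is $\CC^2$ and $\phi$ is smooth with compact support, we may differentiate under the integral sign. The point $t=0$ is a minimum of $g$, so $g''(0) \geq 0$, that is,
$$
\int_{Q_1(0)} \! F''(z)[\n\phi,\n\phi] \, \dd x \geq 0 \quad \text{for all } \phi \in \CC_c^\infty(Q_1(0),\RN).
$$

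\textbf{Step 2: oscillating test functions.} Fix a nonzero $\eta \in \CC_c^\infty(Q_1(0))$, $v \in \RN$, $\xi \in \Rn$, and a large parameter $\lambda$. Set
$$
\phi_\lambda(x) := \eta(x)\, v\, \cos\Bigl( \sum_{i=1}^n \lambda^{1/a_i}\xi_i x_i \Bigr).
$$
By the Leibniz rule, for $\alpha \in \HH$ the leading term of $\p^\alpha \phi_\lambda$ comes from putting all derivatives on the cosine. Each $\p_i$ contributes a factor $\lambda^{1/a_i}\xi_i$, so the total factor is $\lambda^{\langle\alpha,\mbf{a}^{-1}\rangle}\xi^\alpha = \lambda\,\xi^\alpha$. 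Writing $\theta$ for the phase, we have $\p^\alpha \cos\theta = \mathrm{Re}(i^{|\alpha|} e^{i\theta})\,\xi^\alpha\lambda$ (up to this scaling). Hence
$$
\p^\alpha \cos\theta = \lambda\xi^\alpha \times
\begin{cases}
\cos\theta & \alpha \in \HH_0,\\
-\sin\theta & \alpha \in \HH_1,\\
-\cos\theta & \alpha \in \HH_2,\\
\sin\theta & \alpha \in \HH_3.
\end{cases}
$$
Any term in which at least one derivative falls on $\eta$ has a strictly smaller power of $\lambda$, since all $a_i \geq 1$ and the exponents are strictly below $1$. Therefore
$$
\n\phi_\lambda = \lambda\,\eta\,\bigl(A\cos\theta - B\sin\theta\bigr) + o(\lambda),
$$
uniformly on $Q_1(0)$, where
$$
A = v \otimes \Bigl( \sum_{\HH_0}\xi^\alpha \mbf{e}_\alpha - \sum_{\HH_2}\xi^\alpha \mbf{e}_\alpha \Bigr), \qquad B = v \otimes \Bigl( \sum_{\HH_1}\xi^\alpha \mbf{e}_\alpha - \sum_{\HH_3}\xi^\alpha \mbf{e}_\alpha \Bigr).
$$

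\textbf{Step 3: pass to the limit.} Insert $\phi_\lambda$ into the inequality of Step 1 and divide by $\lambda^2$. Letting $\lambda\to\infty$ gives
$$
\limsup_{\lambda\to\infty} \int \eta^2 \bigl( \cos^2\theta\, F''[A,A] + \sin^2\theta\, F''[B,B] - 2\sin\theta\cos\theta\, F''[A,B] \bigr) \, \dd x \geq 0.
$$
For $\xi \neq 0$ the phase $\theta$ oscillates in at least one coordinate direction. By the Riemann--Lebesgue lemma, $\cos^2\theta \wc 1/2$, $\sin^2\theta \wc 1/2$ and $\sin\theta\cos\theta \wc 0$ weakly-$*$. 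The limit is therefore $\frac12\|\eta\|_2^2\bigl(F''[A,A]+F''[B,B]\bigr) \geq 0$. The case $\xi=0$ is trivial.

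Finally, expand $F''[A,A]$ bilinearly. This produces the $\HH_0$ and $\HH_2$ squares together with the cross term $-2\sum_{\alpha\in\HH_0,\beta\in\HH_2}F''(z)[v\otimes\xi^\alpha\mbf{e}_\alpha, v\otimes\xi^\beta\mbf{e}_\beta]$. Likewise $F''[B,B]$ produces the $\HH_1$, $\HH_3$ terms and their cross term. This is exactly the right-hand side of \eqref{arc}.

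The only delicate point is the bookkeeping in Step 2. One must check that the anisotropic scaling $\lambda^{1/a_i}$ makes every $\alpha\in\HH$ scale like exactly $\lambda$, while the lower-order Leibniz terms are $o(\lambda)$ uniformly. One must also track the signs $i^{|\alpha|}$ correctly modulo $4$. Both follow from $\langle\alpha,\mbf{a}^{-1}\rangle=1$ and the finiteness of $\HH$.
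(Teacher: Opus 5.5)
Your proof is correct, and after the common first step (the second variation $\int F''(z)[\n\phi,\n\phi]\,\dd x\geq 0$) it takes a different route from the paper's.

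The paper works on all of $\Rn$ with $\phi\in\CC^\infty_c(\Rn,\RN)$. It extends the inequality by a limiting argument to Schwartz maps and complexifies $F''(z)$ to a hermitian form. It then applies Plancherel with $\widehat{\phi}=v\eta$, which gives the pointwise symbol inequality $F''(z)[w,\overline{w}]\geq 0$ for $w=v\otimes\sum_{\alpha\in\HH}(\mathrm{i}\xi)^{\alpha}\mbf{e}_{\alpha}$. Finally it sorts the terms by $|\alpha| \bmod 4$.

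You instead test with real, anisotropically scaled plane waves $\eta v\cos\bigl(\sum_i\lambda^{1/a_i}\xi_ix_i\bigr)$ and average the oscillating factors via Riemann--Lebesgue. Your matrices $A$ and $B$ are exactly $\mathrm{Re}\,w$ and $\mathrm{Im}\,w$. So $F''[A,A]+F''[B,B]$ is the same quantity $F''(z)[w,\overline{w}]$ the paper obtains, and the two conclusions coincide term by term.

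What your route buys: it is more elementary and local. It uses only compactly supported test maps in $Q_1(0)$, as in the definition, and needs no Schwartz-class extension, no complexification and no appeal to domain-independence. The mod-$4$ sign pattern comes out transparently from differentiating $\cos$.

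What the paper's route buys: Plancherel gives an exact identity between $\int F''(z)[\n\phi,\n\phi]\,\dd x$ and the integral of the symbol against $|\widehat{\phi}|^2$. That identity is reversible, and it is what yields the converse for constant quadratic forms: Corollary \ref{vanhove-theorem} and the `if' direction of Lemma \ref{lemmaLegendreHadamard}. Your plane-wave argument alone gives only the necessity direction.
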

We call a $\CC^2$ integrand $F$ satisfying the condition \eqref{arc} at all $z \in \M$ for $\mbf{a}$-rank-one convex. Note that when all multi-indices
in the pattern of homogeneity $\HH$ have the same remainder modulo $4$, say $\HH = \HH_j$, then in \eqref{arc} only the term with $\HH_j$  appears on the
right-hand side and so $\mbf{a}$-rank-one convexity is in this case simply convexity in the directions of $v \otimes \sum_{\alpha \in \mathcal{H}} \xi^{\alpha}\mbf{e}_{\alpha}$,
where $v \in \RN$, $\xi \in \Rn$. In all other cases the notion of $\mbf{a}$-rank-one convexity is more complicated and is a sort of plurisubharmonicity.
Proposition \ref{propLegendreHadamard} rectifies an issue in \cite[Theorem 9]{kazstowoj17} and \cite{Pros,Pros2}.
Our proof is essentially Van Hove's \cite{vanhove} proof that, for quadratic forms, rank-one convexity implies quasiconvexity, run in reverse.
It is convenient to denote
$\mathcal{H}_e = \bigl\{ \alpha \in \mathcal{H} : \, | \alpha | \mbox{ is even} \bigr\} = \HH_{0} \cup \HH_{2}$ and
$\mathcal{H}_o = \bigl\{ \alpha \in \mathcal{H} : \, | \alpha | \mbox{ is odd} \bigr\} = \HH_{1} \cup \HH_{3}$.

\begin{proof}
Let $\phi \in \CC^{\infty}_{c}( \Rn ,\RN )$. By the $\mbf{a}$-quasiconvexity of $F$ at $z$ we have for $\varepsilon > 0$
that
\begin{eqnarray*}
  0 &\leq& \int_{\Rn} \! \bigl( F(z+\varepsilon \n \phi ) - F(z) \bigr) \, \dd x\\
  &=& \int_{\Rn} \! \bigl( F(z+\varepsilon \n \phi ) - F(z) -F^{\prime}(z) \cdot \varepsilon \n \phi \bigr) \, \dd x\\
&=& \varepsilon^{2}\int_{\Rn} \! \int_{0}^{1} \! (1-t)F^{\prime \prime}(z+t\varepsilon \n \phi ) \bigl[ \n \phi , \n \phi \bigr] \, \dd t \, \dd x,
\end{eqnarray*}
and so upon division by $\varepsilon^2$ and taking the limit $\varepsilon \searrow 0$ we arrive at
$$
0 \leq \int_{\Rn} \! F^{\prime \prime}(z)\bigl[ \n \phi , \n \phi \bigr] \, \dd x .
$$
By a limiting argument we may extend the last inequality to all $\RN$-valued Schwartz test
maps $\phi$ on $\Rn$.
The real-valued symmetric bilinear form $F^{\prime \prime}(z)$ defined on $\M \times \M$ extends to $\Cp^{N \times m} \times \Cp^{N \times m}$ as a complex-valued
hermitian sesquilinear form by the definition
\begin{eqnarray}
  F^{\prime \prime}(z)\bigl[ w_{1},\overline{w_{2}} \bigr] &:=& F^{\prime \prime}(z)\bigl[ \mathrm{Re}(w_{1}),\mathrm{Re}(w_{2}) \bigr]
  + F^{\prime \prime}(z)\bigl[ \mathrm{Im}(w_{1}),\mathrm{Im}(w_{2}) \bigr]\label{complexified}\\
  &&+\mathrm{i}\bigl( F^{\prime \prime}(z)\bigl[ \mathrm{Im}(w_{1}), \mathrm{Re}(w_{2}) \bigr]-F^{\prime \prime}(z)\bigl[ \mathrm{Re}(w_{1}),\mathrm{Im}(w_{2}) \bigr]
  \bigr)\nonumber
\end{eqnarray}
for $w_1$, $w_{2} \in \Cp^{N \times m} \times \Cp^{N \times m}$. Using this definition and the previously derived bound for real-valued test maps
we find that
$$
0 \leq \int_{\Rn} \! F^{\prime \prime}(z)\bigl[ \n \phi , \overline{\n \phi} \bigr] \, \dd x 
$$
persists for Schwartz test maps $\phi \colon \Rn \to \Cp^{N}$.
Next we rewrite the right-hand side by use of the Fourier transform (defined component-wise for matrix valued $\LL^1$-maps $v$ on $\Rn$ by
$\widehat{v}( \xi ) = \int_{\Rn} \! v(x) \mathrm{e}^{-\mathrm{i}\xi \cdot x} \, \dd x$)
$$
\widehat{\n \phi}(\xi ) = \widehat{\phi}( \xi ) \otimes \sum_{\alpha \in \mathcal{H}} \bigl( \mathrm{i}\xi \bigr)^{\alpha} \mbf{e}_{\alpha} ,
$$
and so by Plancherel's theorem we get
$$
0 \leq \int_{\Rn} \! F^{\prime \prime}(z)\bigl[ \widehat{\phi}( \xi ) \otimes \sum_{\alpha \in \mathcal{H}} \bigl( \mathrm{i}\xi \bigr)^{\alpha} \mbf{e}_{\alpha} ,
\overline{\widehat{\phi}( \xi )} \otimes \sum_{\alpha \in \mathcal{H}} \bigl( -\mathrm{i}\xi \bigr)^{\alpha} \mbf{e}_{\alpha} \bigr] \, \dd \xi .
$$
Fix $v \in \RN$ and let $\eta \in \CC^{\infty}_{c}( \Rn )$ be real-valued. Take the Schwartz test map $\phi$ such that $\widehat{\phi} = v \eta$
and record
\begin{eqnarray*}
  0 &\leq& \int_{\Rn} \! \eta^{2} F^{\prime \prime}(z)\bigl[ v \otimes \sum_{\alpha \in \mathcal{H}} \bigl( \mathrm{i}\xi \bigr)^{\alpha} \mbf{e}_{\alpha} ,
    v \otimes \sum_{\alpha \in \mathcal{H}} \bigl( -\mathrm{i}\xi \bigr)^{\alpha} \mbf{e}_{\alpha} \bigr] \, \dd \xi\\
  &=& \int_{\Rn} \! \eta^{2} \sum_{\alpha , \beta \in \mathcal{H}} \mathrm{i}^{| \alpha | + | \beta |} (-1)^{| \beta |}
  F^{\prime \prime}(z)\bigl[ v \otimes \xi^{\alpha} \mbf{e}_{\alpha}, v \otimes \xi^{\beta} \mbf{e}_{\beta} \bigr] \, \dd \xi .
\end{eqnarray*}
Because $\eta$ is arbitrary here we must have the pointwise inequality
$$
0 \leq \sum_{\alpha , \beta \in \mathcal{H}} \mathrm{i}^{| \alpha | + | \beta |} (-1)^{| \beta |}
  F^{\prime \prime}(z)\bigl[ v \otimes \xi^{\alpha} \mbf{e}_{\alpha}, v \otimes \xi^{\beta} \mbf{e}_{\beta} \bigr]
$$
valid for all $\xi \in \Rn$.
Let us introduce the short-hand $c_{\alpha , \beta} = F^{\prime \prime}(z)\bigl[ v \otimes \xi^{\alpha} \mbf{e}_{\alpha}, v \otimes \xi^{\beta} \mbf{e}_{\beta} \bigr]$.
Using $\mathcal{H}_{e} = \mathcal{H}_{0} \cup \mathcal{H}_{2}$, $\mathcal{H}_{o}=\mathcal{H}_{1} \cup \mathcal{H}_{3}$ and noting
$c_{\alpha , \beta}=c_{\beta , \alpha}$, we find
$$
\sum_{\alpha \in \mathcal{H}_{e}, \beta \in \mathcal{H}_{o}} \mathrm{i}^{| \alpha | + | \beta |} (-1)^{| \beta |}c_{\alpha , \beta} =
-\sum_{\alpha \in \mathcal{H}_{o}, \beta \in \mathcal{H}_{e}} \mathrm{i}^{| \alpha | + | \beta |} (-1)^{| \beta |}c_{\alpha , \beta}.
$$
Therefore
$$
\sum_{\alpha , \beta \in \HH} \mathrm{i}^{| \alpha | + | \beta |} (-1)^{| \beta |}c_{\alpha , \beta} = \sum_{\alpha , \beta \in \HH_{0} \cup \HH_{2}} \mathrm{i}^{| \alpha | + | \beta |} c_{\alpha , \beta}
- \sum_{\alpha , \beta \in \HH_{1} \cup \HH_{3}} \mathrm{i}^{| \alpha | + | \beta |} c_{\alpha , \beta} .
$$
We split the first sum on the right-hand side as
$$
\sum_{\alpha , \beta \in \HH_{0} \cup \HH_{2}} \mathrm{i}^{| \alpha | + | \beta |} c_{\alpha , \beta} = \sum_{\alpha , \beta \in \HH_{0}} c_{\alpha , \beta} + \sum_{\alpha , \beta \in \HH_{2}} c_{\alpha , \beta}
-2\sum_{\alpha \in \HH_{0}, \beta \in \HH_{2}} c_{\alpha , \beta}
$$
and the second sum as
$$
- \sum_{\alpha , \beta \in \HH_{1} \cup \HH_{3}} \mathrm{i}^{| \alpha | + | \beta |} c_{\alpha , \beta} =\sum_{\alpha , \beta \in \HH_{1}} c_{\alpha , \beta} + \sum_{\alpha , \beta \in \HH_{3}} c_{\alpha , \beta}
-2\sum_{\alpha \in \HH_{1}, \beta \in \HH_{3}} c_{\alpha , \beta} .
$$
Adding these sums yields the required conclusion \eqref{arc}.
\end{proof}

\begin{lemma}\label{spanninglem}
Assume that the pattern of homogeneity satisfies $\HH = \HH_{j}$ for some $j \in \{ 0,1,2,3 \}$. Then we have
$$
\mathrm{span } \biggl\{ v \otimes \sum_{\alpha \in \HH} \xi^{\alpha} \mbf{e}_{\alpha} : \, v \in \RN , \, \xi \in \Rn \biggr\} = \M .
$$
\end{lemma}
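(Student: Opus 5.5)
The span of $v\otimes w$ over $v\in\RN$ and $w$ in any subset $S\subset\R^m$ equals $\RN\otimes\mathrm{span}\,S$. So it suffices to show that the vectors
$$
w(\xi):=\sum_{\alpha\in\HH}\xi^{\alpha}\mbf{e}_{\alpha},\qquad \xi\in\Rn,
$$
span $\R^m$. Once that holds, every elementary tensor $v\otimes \mbf{e}_\alpha$ lies in the span, and these elementary tensors span $\M$.

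To show $\{w(\xi)\}$ spans $\R^m$, I argue by duality. Suppose $c\in\R^m$ is orthogonal to every $w(\xi)$. Then the polynomial
$$
p(\xi)=\sum_{\alpha\in\HH}c_\alpha\xi^\alpha
$$
vanishes identically on $\Rn$. Distinct multi-indices give distinct monomials, and the elements of $\HH$ are distinct. Since a real polynomial vanishing on all of $\Rn$ has all coefficients zero, we get $c=0$. Hence the orthogonal complement of the span is trivial, which gives the claim.

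This argument does not actually use the hypothesis $\HH=\HH_j$. That hypothesis matters only in context: it makes $\mbf{a}$-rank-one convexity (Proposition \ref{propLegendreHadamard}) reduce to convexity along the cone of directions $v\otimes w(\xi)$, and it is in that setting that the spanning property is used afterwards. So I would simply record that the lemma holds for any pattern of homogeneity.

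There is no real obstacle here. The only point needing care is the identification of $\mathrm{span}\{v\otimes w\}$ with $\RN\otimes\mathrm{span}\{w\}$, which follows directly from bilinearity of $\otimes$.
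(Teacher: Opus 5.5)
Your proof is correct. The span reduces to $\RN\otimes\mathrm{span}\{w(\xi):\xi\in\Rn\}$, and the distinct monomials $\xi^\alpha$, $\alpha\in\HH$, are linearly independent as functions on $\Rn$, so $\mathrm{span}\{w(\xi)\}=\R^m$. The paper does not prove the lemma itself (it defers to Lemma 12 of Kazaniecki--Stolyarov--Wojciechowski), and yours is the natural linear-algebra argument for it.

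You are also right that the hypothesis $\HH=\HH_j$ plays no role in the spanning statement. It matters only because under it \eqref{arc} becomes convexity along the directions $v\otimes w(\xi)$, which is where the lemma is used.
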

The proof is identical to the one given in \cite[Lemma 12]{kazstowoj17} and so is omitted here. Note that the results in \cite{kazstowoj17}
that depend on $\mbf{a}$-rank-one convexity being convexity in the directions of a spanning cone therefore remain valid when $\HH = \HH_j$ for
some $j$. The results in \cite{Pros,Pros2} do not depend on this as long as one assumes the considered integrands satisfy a suitable
Lipschitz condition.

Let us finally record that the full Van Hove's theorem \cite{vanhove} remains valid in the context of mixed smoothness.
A quadratic form $\mathcal{Q}$ on $\M$ is called $\mbf{a}$-rank-one convex provided \eqref{arc} holds.

\begin{corollary}\label{vanhove-theorem}
Let $\mathcal{Q} \colon \M \to \R$ be a quadratic form. Then $\mathcal{Q}$ is $\mbf{a}$-rank-one convex if and only if it is $\mbf{a}$-quasiconvex.
\end{corollary}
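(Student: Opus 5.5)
One direction is immediate from what precedes. If $\mathcal{Q}$ is $\mbf{a}$-quasiconvex, then since a quadratic form is $\CC^2$ with constant second derivative $\mathcal{Q}^{\prime\prime}(z) = 2\mathcal{Q}$ (viewed as the associated bilinear form), Proposition \ref{propLegendreHadamard} applies at any $z$ and gives \eqref{arc} directly. Hence $\mathcal{Q}$ is $\mbf{a}$-rank-one convex. All the work lies in the converse, and for it I plan to run the proof of Proposition \ref{propLegendreHadamard} backwards, in the spirit of Van Hove.

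Assume $\mathcal{Q}$ is $\mbf{a}$-rank-one convex and write $B$ for its symmetric bilinear form. Fix $z_0$ and $\phi \in \CC^\infty_c(Q_1(0),\RN)$. Because $\mathcal{Q}$ is quadratic,
$$
\int \bigl( \mathcal{Q}(z_0+\n\phi) - \mathcal{Q}(z_0) \bigr)\,\dd x = 2\int B[z_0,\n\phi]\,\dd x + \int \mathcal{Q}(\n\phi)\,\dd x .
$$
The linear term vanishes: each entry of $\n\phi$ is a derivative $\p^\alpha\phi$ with $|\alpha|\geq 1$ of a compactly supported function, so it integrates to zero. Quasiconvexity at every $z_0$ therefore reduces to the single inequality $\int \mathcal{Q}(\n\phi)\,\dd x \geq 0$ for all test maps $\phi$.

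To prove this inequality I extend $B$ to the hermitian sesquilinear form \eqref{complexified}. For real-valued $\phi$ this form satisfies $\mathcal{Q}(\n\phi) = B[\n\phi,\overline{\n\phi}]$. Applying Plancherel gives
$$
(2\pi)^n\int \mathcal{Q}(\n\phi)\,\dd x = \int B\Bigl[\widehat\phi(\xi)\otimes \sum_\alpha (\mathrm{i}\xi)^\alpha \mbf{e}_\alpha,\ \overline{\widehat\phi(\xi)}\otimes\sum_\alpha(-\mathrm{i}\xi)^\alpha\mbf{e}_\alpha\Bigr]\dd\xi .
$$
It then suffices to show that the integrand is pointwise nonnegative, with $\widehat\phi(\xi)=w\in\Cp^N$ arbitrary. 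Put $w=v+\mathrm{i}u$ with $u,v\in\RN$. The hermitian form evaluated at $w\otimes\zeta$, with $\zeta=\sum(\mathrm{i}\xi)^\alpha\mbf{e}_\alpha$, splits into real parts. Expanding as in the proof of Proposition \ref{propLegendreHadamard}, the cross terms between $\HH_e$ and $\HH_o$ come with purely imaginary coefficients $\mathrm{i}^{|\alpha|+|\beta|}$, and the mixed $u$–$v$ contributions should assemble into expressions antisymmetric in $(u,v)$. Both therefore cancel because $B$ is symmetric. What remains should be exactly the right-hand side of \eqref{arc} evaluated at $v$ plus the same expression evaluated at $u$. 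Both are nonnegative by hypothesis, which gives the required pointwise bound.

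The main obstacle is this last bookkeeping step. One must verify carefully that, for complex $w$, the terms coupling $\mathrm{Re}\,w$ and $\mathrm{Im}\,w$ really cancel, or at least combine into something nonnegative, and do not destroy positivity. This is where the even/odd sign pattern $\mathrm{i}^{|\alpha|+|\beta|}(-1)^{|\beta|}$ has to be tracked explicitly. If direct cancellation fails, I would instead diagonalize. The form $w\mapsto B[w\otimes\zeta,\overline{w\otimes\zeta}]$ is hermitian on $\Cp^N$, with matrix $H(\xi)=S(\xi)+\mathrm{i}A(\xi)$, where $S$ is the real symmetric part and $A$ is the real antisymmetric part. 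Then $\overline{H(\xi)} = H(-\xi)$ or its transpose, so $w^*Hw\geq0$ reduces to real vectors after combining $\xi$ and $-\xi$, both of which are covered by \eqref{arc}.
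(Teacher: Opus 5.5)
Your ``only if'' direction is fine. So is the Plancherel reduction to pointwise nonnegativity of the hermitian symbol. The step you flag as the main obstacle, however, is a genuine gap, and it cannot be closed. Write $\zeta(\xi)=\sum_{\alpha\in\HH}(\mathrm{i}\xi)^{\alpha}\mbf{e}_{\alpha}=\zeta_e+\mathrm{i}\zeta_o$, with $\zeta_e,\zeta_o$ real and supported on $\HH_e$, $\HH_o$ respectively. Then \eqref{arc} says exactly $\mathcal{R}(v,\xi):=\mathcal{Q}''[v\otimes\zeta_e,v\otimes\zeta_e]+\mathcal{Q}''[v\otimes\zeta_o,v\otimes\zeta_o]\geq0$. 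For $w=v+\mathrm{i}u$ the extension \eqref{complexified} gives
\[
\mathcal{Q}''\bigl[w\otimes\zeta,\overline{w\otimes\zeta}\bigr]=\mathcal{R}(v,\xi)+\mathcal{R}(u,\xi)+2\Bigl(\mathcal{Q}''[v\otimes\zeta_o,u\otimes\zeta_e]-\mathcal{Q}''[v\otimes\zeta_e,u\otimes\zeta_o]\Bigr).
\]
The last term is antisymmetric in $(u,v)$, but that does not make it vanish: an antisymmetric bilinear form is zero only on the diagonal. The $\HH_e$--$\HH_o$ couplings cancel for real $w$, but for complex $w$ they reappear precisely as this $u$--$v$ term. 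So the two cancellations you list are really the same terms, and they do not cancel. Your fallback fails for the same reason. Since $H(-\xi)=\overline{H(\xi)}$ and $\widehat{\phi}(-\xi)=\overline{\widehat{\phi}(\xi)}$ for real $\phi$, the contribution at $-\xi$ equals the one at $\xi$. Pairing $\xi$ with $-\xi$ therefore doubles $A(\xi)$ instead of removing it, and $S(\xi)\geq0$ does not imply $S(\xi)+\mathrm{i}A(\xi)\geq0$.

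In fact the ``if'' direction is false as stated. Let $n=N=2$ and $\mbf{a}=(1,2)$, so $\HH=\{(0,2),(1,0)\}$ with $\HH_2=\{(0,2)\}$, $\HH_1=\{(1,0)\}$ and $\HH_0=\HH_3=\emptyset$. Let $\mathcal{Q}(z)=\det\bigl(z_{(1,0)}\,|\,z_{(0,2)}\bigr)$ be the determinant of the two columns of $z$. Then \eqref{arc} reduces to $2\mathcal{Q}(v\otimes\xi_2^2\mbf{e}_{(0,2)})+2\mathcal{Q}(v\otimes\xi_1\mbf{e}_{(1,0)})$. Both are determinants with a zero column, so \eqref{arc} holds with equality for both $\mathcal{Q}$ and $-\mathcal{Q}$. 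Integrating by parts, however, gives $\int\mathcal{Q}(\n\phi)\,\dd x=-2\int\phi^{1}\,\p_1\p_2^{2}\phi^{2}\,\dd x$. For $\phi=(\p_1\p_2^2\chi,\chi)$ with $0\neq\chi\in\CC^\infty_c(Q_1(0))$ this equals $-2\int|\p_1\p_2^2\chi|^2\,\dd x<0$, so $\mathcal{Q}$ is not $\mbf{a}$-quasiconvex. More generally, $z^1_\alpha z^2_\beta-z^2_\alpha z^1_\beta$ with $|\alpha|$ even and $|\beta|$ odd is a counterexample whenever $N\geq2$ and both parities occur in $\HH$.

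What your Plancherel argument does prove is this: $\mathcal{Q}$ is $\mbf{a}$-quasiconvex if and only if $\mathcal{Q}''[b\otimes\zeta(\xi),\overline{b\otimes\zeta(\xi)}]\geq0$ for all $\xi\in\Rn$ and all complex $b\in\Cp^N$. This condition reduces to \eqref{arc}, and your proof closes, exactly when $N=1$ or all orders $|\alpha|$, $\alpha\in\HH$, have the same parity.

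The paper records the corollary without proof, so there is nothing there to follow. The analogous ``apply to real and imaginary parts separately'' step in the proof of Lemma~\ref{lemmaLegendreHadamard} has the same defect. Adding $\varepsilon|z|^2$ with small $\varepsilon>0$ to the example above contradicts the ``if'' part of that lemma, so it offers no way around this.
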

\end{section}

\begin{section}{Linear theory}
Throughout this section we fix $Q=Q_{1}(0)$ and state all results on $Q$. The corresponding results on a general anisotropic cube $Q_{R}(x_{0})$
are then obtained by translation and scaling. 
\begin{definition}
We say that a constant, symmetric tensor $T$ is strictly $\mbf{a}$-quasielliptic with ellipticity constant $\nu >0$ if 
\begin{equation}\label{quasiellipiticity eqn}
\int_{Q} \! T\n \phi \cdot \n \phi \, \dd x \geq \nu \int_{Q} \! |\n \phi |^2 \, \dd x
\end{equation} for all $\phi \in \WW_0^{\mbf{a},2}(Q)$. 
\end{definition}
In this section we study weak solutions of
\begin{equation}\begin{cases}
    \n^* T \n u=f \text{ in } Q,\\
    u \in \WW_0^{\mbf{a},p}(Q)\end{cases}
\end{equation}
where $T$ is $\mbf{a}$-quasielliptic. Here $\n^*$ denotes the distributional adjoint of $\n$.
In general we assume $f \in \WW^{-\mbf{a},p}(Q) := (\WW_0^{\mbf{a},p'}(Q))^*$. One may show, as in the standard case, that any $f \in \WW^{-\mbf{a},p}(Q)$ may be
written as $f= \sum_{\beta \in \mathcal{A}} \p^\beta f_\beta$ for some $f_\beta \in \LL^p(Q)$, and
$\Vert f\Vert_{\WW^{-\mbf{a},p}(Q)}\approx \inf_{\left\{f_\beta\right\}_\beta} \sum_{\beta \in \mathcal A} \Vert f_\beta\Vert_{\LL^p(Q)}$, where the infimum is taken over all choices
of $\left\{f_\beta \right\}_\beta$ for which the representation $f= \sum_{\beta \in \mathcal{A}} \p^\beta f_\beta$ holds. We begin by observing that, analogously to
the isotropic case, $\mbf{a}$-quasiellipticity (\ref{quasiellipiticity eqn}) is equivalent to a Legendre--Hadamard-type condition.

\begin{lemma}\label{lemmaLegendreHadamard}
A constant, symmetric tensor $T$ parametrised by pairs $( (\alpha, j), (\widetilde{\alpha}, \widetilde{j}))$ with
$\langle \alpha, \mbf{a}^{-1} \rangle = \langle \widetilde{\alpha}, \mbf{a}^{-1} \rangle = 1$ and $j, \widetilde{j} \in \{1, \ldots, N\}$ is strictly
quasielliptic with quasiellipticity constant $\nu$ if and only if, for all $\xi \in \Rn$ and all $b \in \RN$,
\begin{equation}\label{eqStrictQuasiconvexity}
\begin{aligned}
\nu \left( \sum_{\langle \alpha, \mbf a^{-1} \rangle = 1} 
\xi^{2\alpha} \right) |b|^2 \leq & \sum_{\alpha, \widetilde{\alpha} \in \mathcal{H}_e} \sum_{1 \leq j, \widetilde{j} \leq N} T_{(\widetilde{\alpha}, \widetilde{j})}^{(\alpha, j)}
(-1)^{(|\alpha| + |\widetilde{\alpha}|)/2} \xi^{\alpha + \widetilde{\alpha}} b_j b_{\widetilde{j}} + \\
&  \sum_{\beta, \widetilde{\beta} \in \mathcal{H}_o} \sum_{1 \leq j,\widetilde{j} \leq N} T_{(\widetilde{\beta}, \widetilde{j})}^{(\beta, j)} (-1)^{(|\beta| + |\widetilde{\beta}| + 2)/2}
\xi^{\beta + \widetilde{\beta}} b_j b_{\widetilde{j}},
\end{aligned}
\end{equation}
where we, as above, have let $\mathcal{H}_e$ and $\mathcal{H}_o$ be the subsets of $\{ \alpha \colon \langle \alpha, \mbf a^{-1} \rangle = 1\}$ of even and
odd degrees respectively.
\end{lemma}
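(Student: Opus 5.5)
The plan is to use the Fourier transform, as in the proof of Proposition \ref{propLegendreHadamard}. First I reduce \eqref{quasiellipiticity eqn} on $Q$ to the same inequality for all $\phi \in \CC^{\infty}_{c}(\Rn,\RN)$ integrated over $\Rn$. By density it suffices to take $\phi \in \CC^\infty_c(Q)$. The anisotropic dilation $x \mapsto x_0+R\odot x$ multiplies both $\int T\n\phi\cdot\n\phi$ and $\int|\n\phi|^2$ by the same factor $R^{|\mbf{a}^{-1}|-2}$. Since every compactly supported $\phi$ can be translated and dilated into $Q$, the two formulations are equivalent. This is why the constant $\nu$ is the same in \eqref{quasiellipiticity eqn} and in \eqref{eqStrictQuasiconvexity}.

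Next, Plancherel together with $\widehat{\n\phi}(\xi)=\widehat\phi(\xi)\otimes\sum_{\alpha\in\HH}(\mathrm{i}\xi)^\alpha\mbf{e}_\alpha$ gives
$$
\int_{\Rn} T\n\phi\cdot\overline{\n\phi}\,\dd x=(2\pi)^{-n}\int_{\Rn}\sum_{\alpha,\widetilde\alpha\in\HH}\sum_{j,\widetilde j}T^{(\alpha,j)}_{(\widetilde\alpha,\widetilde j)}\,\mathrm{i}^{|\alpha|-|\widetilde\alpha|}\xi^{\alpha+\widetilde\alpha}\,\widehat\phi_j\overline{\widehat\phi_{\widetilde j}}\,\dd\xi .
$$
Similarly $\int|\n\phi|^2=(2\pi)^{-n}\int\sum_\alpha\xi^{2\alpha}|\widehat\phi|^2$. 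I then split the double sum into the blocks $\HH_e\times\HH_e$, $\HH_o\times\HH_o$ and the mixed blocks. On the even--even block, $\mathrm{i}^{|\alpha|-|\widetilde\alpha|}=(-1)^{(|\alpha|+|\widetilde\alpha|)/2}$, because the two exponents differ by $|\widetilde\alpha|$, which is even. On the odd--odd block, $\mathrm{i}^{|\beta|-|\widetilde\beta|}=(-1)^{(|\beta|+|\widetilde\beta|+2)/2}$, because the exponents differ by $|\widetilde\beta|+1$, which is even. So the real symmetric part $A(\xi)$ of the symbol is exactly the right-hand side of \eqref{eqStrictQuasiconvexity}.

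\emph{Necessity.} As in Proposition \ref{propLegendreHadamard}, the inequality extends to complex Schwartz $\phi$ via the hermitian extension. I take $\widehat\phi=b\,\eta$ with $b\in\RN$ and $\eta$ real-valued and localized. For real $b$ the mixed terms cancel: the term for $(\alpha,j,\widetilde\alpha,\widetilde j)$ and the term with the roles swapped carry the conjugate factors $\mathrm{i}^{\pm\text{odd}}$ and equal coefficients, by the symmetry of $T$. This leaves $\int\eta^2\bigl(A(\xi)b\cdot b-\nu\sum\xi^{2\alpha}|b|^2\bigr)\,\dd\xi\ge0$. Since $\eta$ is arbitrary, the pointwise inequality follows.

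\emph{Sufficiency.} For real $\phi$ I use $\widehat\phi(-\xi)=\overline{\widehat\phi(\xi)}$ and symmetrize the integrand under $\xi\mapsto-\xi$. The blocks of $A$ are even in $\xi$. The mixed block has odd total degree $|\alpha|+|\widetilde\beta|$, so its entries are odd in $\xi$ and purely imaginary. I then write $\widehat\phi=x+\mathrm{i}y$ with $x,y$ real and apply \eqref{eqStrictQuasiconvexity} to $x$ and to $y$ separately, which handles the $A$-part.

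The main obstacle is the mixed even/odd contribution in the sufficiency direction when $\widehat\phi$ is genuinely complex. This contribution has the form $-2\,x\cdot B(\xi)y$, where $B$ is built from the mixed coefficients. I expect to show that it vanishes after integration. I would try first to show that the symmetry of $T$ makes $B(\xi)$ antisymmetric in $(j,\widetilde j)$ on each pair $(\alpha,\widetilde\beta)$, and then combine this with the pairing of $\xi$ and $-\xi$ in the Plancherel integral. If this fails, the fallback is to restrict to tensors $T$ whose mixed $\HH_e\times\HH_o$ block does not contribute to $\int T\n\phi\cdot\n\phi$. By an integration by parts, such terms pair $\p^\alpha\phi$ and $\p^{\widetilde\beta}\phi$ with $|\alpha|+|\widetilde\beta|$ odd, so I would try to show that they integrate to zero against compactly supported $\phi$.
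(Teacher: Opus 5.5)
\textbf{There is a genuine gap, and it is in sufficiency, exactly where you suspected it.}

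What is correct: your scaling reduction to $\Rn$, the symbol computation identifying the real part $A(\xi)$ with the right-hand side of \eqref{eqStrictQuasiconvexity}, and the necessity argument (real $b$, so the mixed-parity terms cancel). These agree with the paper, which gets the ``only if'' part from Proposition \ref{propLegendreHadamard}.

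What fails: you leave the mixed term $-2x\cdot B(\xi)y$ open, and neither of your routes disposes of it.
\begin{itemize}
\item Antisymmetry of $B(\xi)$ only kills $x\cdot B(\xi)x$ and $y\cdot B(\xi)y$. For real $\phi$, the real part $x$ of $\widehat\phi$ is even in $\xi$, the imaginary part $y$ is odd, and $B$ is odd. So $x\cdot B(\xi)y$ is \emph{even}, and pairing $\xi$ with $-\xi$ doubles it rather than cancelling it.
\item The fallback fails once $N\ge 2$. For $|\alpha|+|\beta|$ odd, integration by parts only gives $\int\p^\alpha\phi^j\,\p^\beta\phi^{\widetilde j}\,\dd x=-\int\p^\beta\phi^j\,\p^\alpha\phi^{\widetilde j}\,\dd x$. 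So the part of a mixed block that is symmetric in $(j,\widetilde j)$ integrates to zero, but the antisymmetric part survives.
\end{itemize}

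In fact no argument can close this step. The ``if'' direction is false as stated once $N\ge 2$ and $\HH$ contains multi-indices of both parities. The simplest instance is $n=N=2$, $\mbf{a}=(1,2)$, so that $\n\phi=(\p_2^2\phi,\p_1\phi)$, $\HH_e=\{(0,2)\}$ and $\HH_o=\{(1,0)\}$. Let $Tz\cdot z=\nu_0|z|^2+2c\,z^1_{(1,0)}z^2_{(0,2)}$, where $z^j_\alpha$ is the entry of $z$ in row $j$ and column $\alpha$. The mixed block does not enter \eqref{eqStrictQuasiconvexity}, which therefore holds with $\nu=\nu_0$ for every $c$. But for $\phi=(-\p_2^2\psi,\p_1\psi)$ one has pointwise
\begin{equation*}
T\n\phi\cdot\n\phi=\nu_0\bigl(|\p_1^2\psi|^2+|\p_2^4\psi|^2\bigr)+2(\nu_0-c)|\p_1\p_2^2\psi|^2 .
\end{equation*}
Its integral is negative if $c>2\nu_0$ and $\widehat\psi$ is concentrated near $\pm(1,1)$; then rescale into $Q$.

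The paper's proof asserts exactly the cancellation you doubted: ``since $T$ is symmetric the terms where the orders of $\alpha$ and $\widetilde\alpha$ have different parities cancel out''. That is valid only where $\widehat\phi^j\overline{\widehat\phi^{\widetilde j}}$ is real, e.g.\ $j=\widetilde j$, so the paper's argument has the same hole.

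Your argument does prove sufficiency whenever $B\equiv 0$, for instance:
\begin{itemize}
\item $N=1$;
\item $\HH=\HH_e$ or $\HH=\HH_o$;
\item every mixed block is symmetric in $(j,\widetilde j)$.
\end{itemize}
In general, your Plancherel identity shows that the right criterion is the complex one: $\nu\bigl(\sum_{\alpha}\xi^{2\alpha}\bigr)|w|^2\le w^{*}\bigl(A(\xi)+\mathrm{i}B(\xi)\bigr)w$ for all $w\in\Cp^N$. On real $w$ this collapses to \eqref{eqStrictQuasiconvexity}.
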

\begin{proof}
The `only if' part is an immediate corollary of Proposition \ref{propLegendreHadamard}.
Conversely, let $T$ be a tensor satisfying our assumptions, and let us fix an arbitrary function $\phi \in \WW^{\mbf a,2}_0(Q)$.
We may extend $\phi$ by $0$ to the whole of $\Rn$ and, using Plancherel's theorem, write 
\begin{equation*}
\begin{aligned} 
\int_Q \! \langle T \n \phi, \n \phi \rangle \, \dd x &=  \int_{\Rn} \! \langle \widehat{T \n \phi}, \widehat{\n \phi} \rangle \dd \xi \\ 
&= \int_{\Rn} \! \sum_{\substack{(\alpha, j) \\ (\widetilde{\alpha}, \widetilde{j})}} T^{\alpha, j}_{\widetilde{\alpha},\widetilde{j}} (i \xi)^\alpha \widehat{\phi}^j
\overline{(i\xi)^{\widetilde{\alpha}} \widehat{\phi}^{\widetilde{j}}} \, \dd \xi, 
\end{aligned}
\end{equation*}
where $\h{f}$ denotes the Fourier transform of $f$ and $\overline{z}$ is the complex conjugate of $z$.
Since $T$ is symmetric the terms where the orders of $\alpha$ and $\widetilde{\alpha}$ have different parities cancel out due to the $i^{|\alpha|}(-i)^{|\widetilde{\alpha}|}$ term.
Thus, we are left with exactly the expression on the right-hand side of \eqref{eqStrictQuasiconvexity} with $b = \h{\varphi}$.
Applying the inequality \eqref{eqStrictQuasiconvexity} separately to real and imaginary parts of $\h{\phi}$ we may estimate with
$$
\nu \int_{\Rn} \! \left( \sum_{\langle \alpha, \mbf a^{-1} \rangle = 1} \xi^{2\alpha} \right) \h{\phi} \overline{\h{\phi}} \, \dd \xi =
\nu \int_{\Rn} \!  \sum_{\langle \alpha, \mbf a^{-1} \rangle = 1}  \h{\p^\alpha \phi} \overline{\h{\p^{\alpha} \phi}} \, \dd \xi,
$$
and using Plancherel's identity again ends the proof.
\end{proof}
Next we recall the notion of hypoellipticity and semiellipticity from \cite{hormander1969linear}.
We say that a differential operator $P(D)$ is hypoelliptic if for any open set $A \subset \Rn$ and a distribution $u \in \mathcal{D}'(A)$ the equation
$P(D) u = 0$ in $\mathcal{D}'(A)$ implies $u \in \CC^{\infty}(A)$. We say $P(D)$ is semielliptic if there exists a vector
$\gamma = (\gamma_1, \ldots, \gamma_n) \in \N^n$  such that
$$
P(D) = \sum_{\langle \alpha, \gamma^{-1} \rangle \leq 1} a_\alpha \p^{\alpha}
$$
with some coefficients $a_\alpha \in \R^{N \times N}$, and if we set 
$$
P^0(D) := \sum_{\langle \alpha, \gamma^{-1}\rangle = 1} a_\alpha  \p^{\alpha}
$$
then $\det(P^0(-i \xi)) \not= 0$ for all $\xi \in \Rn \setminus \{0\}$.
It is well known that semielliptic operators are hypoelliptic (see \cite[Theorem 4.1.8]{hormander1969linear}).

\begin{corollary}\label{propAEllipticIsHypoelliptic}
If $T$ is a constant, strictly $\mbf{a}$-quasielliptic tensor then the operator $\n^{*} T \n$ is semielliptic and therefore in particular hypoelliptic.
\end{corollary}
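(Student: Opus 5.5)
We will verify directly that $\n^{*} T \n$ has the form required in the definition of semiellipticity, with $\gamma = 2\mbf{a}$. The principal symbol will then be identified with the quadratic form appearing in Lemma \ref{lemmaLegendreHadamard}, which gives the non-degeneracy. Hypoellipticity then follows from \cite[Theorem 4.1.8]{hormander1969linear}.

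\emph{Step 1: the form of the operator.} Expanding,
$$
\n^{*} T \n u = \sum_{\alpha , \widetilde{\alpha} \in \HH} (-1)^{|\widetilde{\alpha}|} \p^{\alpha + \widetilde{\alpha}} \bigl( T^{(\alpha , \cdot)}_{(\widetilde{\alpha}, \cdot)} u \bigr)
$$
for suitable indices. This is a sum of terms $a_{\beta}\p^{\beta}$ with $\beta = \alpha + \widetilde{\alpha}$ and coefficients $a_{\beta} \in \R^{N\times N}$. Every such $\beta$ satisfies $\langle \beta , (2\mbf{a})^{-1} \rangle = \tfrac12 (1+1) = 1$. Hence $P = P^0$ is homogeneous with respect to $\gamma = 2\mbf{a} \in \N^n$, and it is of the required form.

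\emph{Step 2: the symbol.} Let $\xi \neq 0$ and $b \in \RN$. Replacing $\p^{\beta}$ by $(-\mathrm{i}\xi)^{\beta}$, the quadratic form $b^{T} P^0(-\mathrm{i}\xi) b$ becomes
$$
\sum_{\alpha , \widetilde{\alpha}} (-1)^{|\widetilde{\alpha}|} (-\mathrm{i})^{|\alpha|+|\widetilde{\alpha}|} \xi^{\alpha+\widetilde{\alpha}} \, T \, b_j b_{\widetilde{j}} .
$$
By the symmetry of $T$, the mixed-parity terms cancel in pairs, exactly as in the proof of Lemma \ref{lemmaLegendreHadamard}. Depending on the sign convention chosen for $\n^*$, what remains is $\pm$ the right-hand side of \eqref{eqStrictQuasiconvexity}. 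For the even--even terms we have $(-1)^{|\widetilde{\alpha}|}(-\mathrm{i})^{|\alpha|+|\widetilde{\alpha}|} = (-1)^{(|\alpha|+|\widetilde{\alpha}|)/2}$. The odd--odd terms produce the factor $(-1)^{(|\beta|+|\widetilde{\beta}|+2)/2}$. Checking these signs against \eqref{eqStrictQuasiconvexity} is routine bookkeeping and is the only place where care is needed.

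\emph{Step 3: non-degeneracy.} By Lemma \ref{lemmaLegendreHadamard},
$$
|b^{T} P^0(-\mathrm{i}\xi) b| \geq \nu \Bigl( \sum_{\alpha\in\HH} \xi^{2\alpha} \Bigr) |b|^2 .
$$
Since $\mbf{a}_i e_i$ lies in $\HH$ for each $i$, the bracket is at least $\sum_i \xi_i^{2a_i} > 0$ whenever $\xi \neq 0$. Moreover, $P^0(-\mathrm{i}\xi)$ is a real symmetric matrix, because only even total orders occur. It is therefore (up to sign) positive definite, hence invertible, so $\det P^0(-\mathrm{i}\xi) \neq 0$. Thus $\n^*T\n$ is semielliptic, and hence hypoelliptic by Hörmander's theorem.
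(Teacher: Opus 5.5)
Your route is the paper's (weights $\gamma=2\mbf{a}$, cancellation of mixed-parity terms, then the inequality of Lemma \ref{lemmaLegendreHadamard}). However, the last step fails when $N\ge 2$ and $\HH$ contains orders of both parities, and the paper's written proof makes the same cancellation claim. Symmetry of $T$ only gives $T^{(\alpha,j)}_{(\widetilde\alpha,\widetilde j)}=T^{(\widetilde\alpha,\widetilde j)}_{(\alpha,j)}$. From this, the coefficient matrix $A_\beta$ of $\p^\beta$ in $\n^*T\n$ satisfies $A_\beta^{T}=(-1)^{|\beta|}A_\beta$, so for $|\beta|$ odd it is antisymmetric but in general not zero. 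The mixed-parity terms therefore cancel only in $b^{T}P^0(-\mathrm{i}\xi)b$ for \emph{real} $b$. The matrix $P^0(-\mathrm{i}\xi)$ is hermitian with a purely imaginary antisymmetric part, so your claim that it is real symmetric is false. A lower bound over $b\in\RN$ then says nothing about $\det P^0(-\mathrm{i}\xi)$.

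A concrete example: take $\mbf{a}=(1,2)$, $N=2$ and $T[z,z]=|z|^2+2c\bigl(z^1_{(1,0)}z^2_{(0,2)}-z^2_{(1,0)}z^1_{(0,2)}\bigr)$. Then $\n^*T\n u=\bigl(\p_2^4u^1-\p_1^2u^1-2c\,\p_1\p_2^2u^2,\ \p_2^4u^2-\p_1^2u^2+2c\,\p_1\p_2^2u^1\bigr)$. The right-hand side of \eqref{eqStrictQuasiconvexity} equals $(\xi_1^2+\xi_2^4)|b|^2$ for every $c$. Yet $\det P^0(-\mathrm{i}\xi)=(\xi_1^2+\xi_2^4)^2-4c^2\xi_1^2\xi_2^4$, which vanishes at $\xi=(1,1)$ when $c=1$. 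That $T$ is not strictly quasielliptic, but your argument uses nothing beyond the real-$b$ inequality, so it cannot detect this. Your Steps 2--3 are valid only when $N=1$ or when all orders in $\HH$ have the same parity.

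The fix is to use strict quasiellipticity itself rather than its real-$b$ consequence.

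\begin{enumerate}[(1)]
\item By anisotropic scaling and translation, the defining inequality holds for all $\phi\in\CC^\infty_c(\Rn,\RN)$ with the same $\nu$.
\item It extends to $\Cp^N$-valued $\phi$ through the hermitian extension used in the proof of Proposition \ref{propLegendreHadamard}; the imaginary cross terms cancel by symmetry of $T$.
\item Plancherel with $\widehat{\phi}=b\,\eta$, where $b\in\Cp^N$ and $\eta$ is real, gives $b^{*}\mathcal{S}(\xi)b\ge\nu\bigl(\sum_{\alpha\in\HH}\xi^{2\alpha}\bigr)|b|^2$ for all complex $b$.
\end{enumerate}

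Here $\mathcal{S}(\xi)_{\widetilde j j}=\sum_{\alpha,\widetilde\alpha\in\HH}T^{(\alpha,j)}_{(\widetilde\alpha,\widetilde j)}(\mathrm{i}\xi)^{\alpha}\overline{(\mathrm{i}\xi)^{\widetilde\alpha}}$ is the full hermitian symbol, mixed-parity terms included, and $P^0(-\mathrm{i}\xi)=\overline{\mathcal{S}(\xi)}$. A hermitian matrix bounded below by a positive multiple of the identity is invertible. Your observation $\sum_{\alpha\in\HH}\xi^{2\alpha}\ge\sum_i\xi_i^{2a_i}>0$ for $\xi\neq 0$ then gives $\det P^0(-\mathrm{i}\xi)\neq 0$, and the appeal to H\"ormander goes through as you wrote it.
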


\begin{proof}
Note the operator $\n^{*} T \n$ only involves derivatives $\p^{\alpha + \beta}$ on the hyperplane of homogeneity given through
$\langle \alpha + \beta, \mbf{a}^{-1} \rangle = 2$, so the only thing that we need to check is that its determinant is non-zero.
To do so we write 
$$
\n^{*} T \n (-i\xi) = \sum_{\substack{\langle \alpha, \mbf a^{-1} \rangle = 1 \\
\langle \widetilde{\alpha}, \mbf a^{-1} \rangle = 1}} T^{\alpha}_{\widetilde{\alpha}} (-i \xi)^{\alpha} (-1)^{|\widetilde{\alpha}|}(-i \xi)^{\widetilde{\alpha}}.
$$
As noted before, when $\alpha$ and $\widetilde{\alpha}$ have orders of different parities the $(\alpha, \widetilde{\alpha})$ term cancels the
$(\widetilde{\alpha}, \alpha)$ one, due to the $(-1)^{|\widetilde{\alpha}|}$ factor coming from the adjoint $\n^{*}$. Hence, with $\mathcal{H}_e$
and $\mathcal{H}_o$ as in the lemma above,
\begin{equation*}
  \n^{*} T \n (-i\xi) = \sum_{\alpha, \widetilde{\alpha} \in \mathcal{H}_e} T^{\alpha}_{\widetilde{\alpha}} (-1)^{(|\alpha| + |\widetilde{\alpha}|)/2} \xi^{\alpha + \widetilde{\alpha}} +
  \sum_{\alpha, \widetilde{\alpha} \in \mathcal{H}_o} T^{\alpha}_{\widetilde{\alpha}} (-1)^{(|\alpha| + |\widetilde{\alpha}| + 2)/2} \xi^{\alpha + \widetilde{\alpha}}.
\end{equation*}
Therefore, the inequality \eqref{eqStrictQuasiconvexity} may be rewritten as
$$
b^T \left(\n^{*} T \n (-i\xi) \right) b \geq \nu \left( \sum_{\langle \alpha, \mbf a^{-1} \rangle = 1} \xi^{2 \alpha} \right) \|b\|_2^2 > 0
$$
for every $0 \not= b \in \RN$. This means that $\n^{*} T \n (-i\xi) $ is a positive definite matrix, from which we infer that its determinant is
strictly positive, thus ending the proof.
\end{proof}
We recall two lemmas essentially due to Giusti (see \cite{GiustiItalian}).

\begin{lemma}\cite[Corollary 8.11]{GiustiItalian}\label{lemmaZeroRHSEstimate}
Let $T$ be a strictly $\mbf a$-quasielliptic tensor. Let $u \in C^{\infty}({Q_r})$ solve $\n^{*} T \n u = 0$ in $Q_r \Subset Q.$
Then, for any $\alpha$ with $\langle \alpha, \mbf a^{-1} \rangle \leq 1$, any radius $\rho < r$, any $p \geq 2$, and any constant $b$ we have
$$
\| \p^\alpha u - (\p^\alpha u)_\rho \|^p_{\LL^{p}(Q_\rho)} \leq C \left(\frac{\rho}{r}\right)^{|\mbf a^{-1}| + (p/\max_i a_i)} \| \p^\alpha u - b \|^p_{\LL^{p}(Q_r)}
$$
and
$$
\Vert \p^\alpha u \Vert_{\LL^{p}(Q_\rho)}^p \leq C\left( \frac{\rho}{r}\right)^{|\mbf{a}^{-1}|} \Vert \p^\alpha u\Vert_{\LL^{p}(Q_r)}^p,
$$
where the constant $C = C(\nu)$ only depends on the $\mbf a$-quasiellipticity constant of $T$.
\end{lemma}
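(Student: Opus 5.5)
The plan is to reduce both estimates to a single interior $\LL^\infty$ bound for solutions and their derivatives, obtained from a Caccioppoli inequality, differentiation of the equation and the anisotropic Sobolev embedding. By translation and the scaling $x\mapsto x_0+R\odot x$ from Section~2, which maps solutions to solutions (the operator $\n^{*}T\n$ is homogeneous of anisotropic degree $2$), it suffices to take $r=1$. For $\rho\geq 1/2$ both inequalities are trivial with a suitable $C$, so the real case is $\rho<1/2$.

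\textbf{Step 1 (Caccioppoli).} Let $v$ be a smooth solution of $\n^{*}T\n v=0$ on $Q_s$ and $\eta$ a cut-off between $Q_{s'}$ and $Q_s$ with $|\p^\gamma\eta|\lesssim (s-s')^{-\langle\gamma,\mbf a^{-1}\rangle}$. I test strict $\mbf a$-quasiellipticity with $\eta^{k}v$ for a large power $k$ and use the equation. The difference between $\n(\eta^k v)$ and $\eta^k\n v$ consists of terms $\p^{\gamma}\eta^k\,\p^{\beta}v$ with $\langle\beta,\mbf a^{-1}\rangle<1$. These are estimated by Young's inequality together with an anisotropic interpolation inequality, obtained from Proposition~\ref{prp:sobolev embedding} with $q=p=2$ (formally $\chi_\beta=\langle\beta,\mbf a^{-1}\rangle$) applied on small boxes and covered. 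The interpolation gives
$$
\Vert\p^\beta v\Vert_{\LL^2}\leq \varepsilon\Vert\n v\Vert_{\LL^2}+C_\varepsilon\Vert v\Vert_{\LL^2},
$$
with the appropriate scaling. Absorbing, together with a standard hole-filling/iteration lemma over the radii, yields
$$
\int_{Q_{s'}}|\n v|^2\leq C\sum_{\langle\beta,\mbf a^{-1}\rangle<1}(s-s')^{-2(1-\langle\beta,\mbf a^{-1}\rangle)}\int_{Q_s}|\p^\beta v|^2 ,
$$
and hence, after interpolating once more, a bound by $C(s-s')^{-2}\int_{Q_s}|v|^2$ up to the iteration lemma. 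I expect this step to be the main obstacle. The difficulty is making the anisotropic interpolation of the lower-order terms $\p^\beta v$ (including mixed ones) quantitative with the correct powers of $s-s'$, and absorbing it. If the direct approach is awkward, I would first try Proposition~\ref{prp:poincare with perturbation} to replace $v$ by $v-P$ and control lower-order terms by $\n v$.

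\textbf{Step 2 (higher regularity and sup bounds).} Since $T$ is constant, every derivative $\p^\gamma v$ is again a solution, and so is $v-c$ for any constant $c$ (the operator annihilates constants). Iterating Step 1 on a finite chain of shrinking boxes between $Q_{1/2}$ and $Q_1$ gives
$$
\sum_{j}\Vert\p_j^{ka_j}w\Vert_{\LL^2(Q_{1/2})}\leq C\Vert w\Vert_{\LL^2(Q_1)}
$$
for $w=\p^\alpha u$ and any $k$, and likewise for $\p_jw$. Choosing $k$ large so that $\p_jw$ and $w$ lie in a high-order anisotropic space, Proposition~\ref{prp:sobolev embedding} (Morrey-type endpoint / Campanato Lemma~\ref{lem: campanato}) yields
$$
\sup_{Q_{1/2}}|w|+\sup_{Q_{1/2}}|\p_jw|\leq C\Vert w\Vert_{\LL^2(Q_1)}\leq C\Vert w\Vert_{\LL^p(Q_1)},
$$
using Hölder's inequality and $p\geq2$. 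Here $C$ depends only on $\nu$ and universal data.

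\textbf{Step 3 (conclusion).} For the second estimate:
$$
\Vert\p^\alpha u\Vert^p_{\LL^p(Q_\rho)}\leq |Q_\rho|\sup_{Q_{1/2}}|\p^\alpha u|^p\leq C\rho^{|\mbf a^{-1}|}\Vert\p^\alpha u\Vert^p_{\LL^p(Q_1)}.
$$
For the first estimate I apply Step 2 to the solution $w=\p^\alpha u-b$. On $Q_\rho$ the mean value theorem along coordinate directions gives
$$
|w(x)-w(y)|\leq\sum_j 2\rho^{1/a_j}\sup_{Q_{1/2}}|\p_jw|\leq C\rho^{1/\max_i a_i}\Vert w\Vert_{\LL^p(Q_1)} ,
$$
since $\rho<1$. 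Hence
$$
\Vert\p^\alpha u-(\p^\alpha u)_\rho\Vert^p_{\LL^p(Q_\rho)}\leq|Q_\rho|\,(\mathrm{osc}_{Q_\rho}w)^p\leq C\rho^{|\mbf a^{-1}|+p/\max_ia_i}\Vert\p^\alpha u-b\Vert^p_{\LL^p(Q_1)} .
$$
Rescaling back to radius $r$ gives both inequalities.
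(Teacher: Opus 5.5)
Your argument is correct. The paper gives no proof of this lemma; it is quoted from \cite[Corollary 8.11]{GiustiItalian}. What you write is a self-contained version of the classical Campanato--Giusti route:
\begin{itemize}
\item a Caccioppoli inequality;
\item the observation that derivatives of solutions, and solutions minus constants, are again solutions ($T$ is constant and $0\notin\mathcal{H}$);
\item an embedding giving $\sup_{Q_{1/2}}(|w|+|\partial_j w|)\le C\|w\|_{L^2(Q_1)}$;
\item then $|Q_\rho|=2^n\rho^{|\mathbf a^{-1}|}$, the mean value theorem along coordinate segments (which stay in the product set $Q_\rho$), and $\rho^{1/a_j}\le\rho^{1/\max_i a_i}$ for $\rho<1$.
\end{itemize}
Your version makes transparent that the whole $L^p$ statement reduces to an $L^\infty$--$L^2$ bound for solutions and their first derivatives. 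So $p\ge 2$ enters only through H\"older's inequality on $Q_1$. The extension to $1\le p<2$ mentioned after the lemma would also need the sup bound in terms of an $L^p$ norm, which costs one more interpolation-and-iteration step.

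A few details need adjusting, none of which affects the argument.

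\emph{Cut-off bounds.} For a cut-off between $Q_{s'}$ and $Q_s$ one only has $|\partial^\gamma\eta|\lesssim\prod_i(s^{1/a_i}-s'^{1/a_i})^{-\gamma_i}$. Since $s^{1/a_i}-s'^{1/a_i}\le (s-s')^{1/a_i}$, and the left side is much smaller when $s-s'\ll s$ and $a_i\ge 2$, your bound $(s-s')^{-\langle\gamma,\mathbf a^{-1}\rangle}$ is valid only for $s'\le cs$. In the hole-filling over arbitrary $1/2\le s'<s\le 1$ you should use $(s-s')^{-|\gamma|}$, as in Lemma~\ref{lem: iteration for V function}; the iteration lemma tolerates this.

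\emph{Embeddings.} Proposition~\ref{prp:sobolev embedding} is stated only for $q>p$. Your interpolation inequality still follows from it: take $p=2$ and $q$ slightly above $2$ (so that $\chi_\beta\le 1$), apply H\"older's inequality on each box, and cover $Q_s$ by boxes of radius $\delta$. Similarly, the $L^\infty$ bound follows by applying the proposition with $k\mathbf a$ in place of $\mathbf a$. This gives $\partial_i w\in L^q(Q_{1/2})$, $i=1,\dots,n$, for some $q>n$, and Morrey's inequality then finishes.

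\emph{The constant.} It must also depend on an upper bound for $|T|$. This enters when you estimate $T\nabla_{\mathbf a}v$ against the lower-order terms, and it cannot be avoided. For instance, $\sin(\sqrt{\Lambda}x)\cosh y$ solves $\partial_x^2u+\Lambda\partial_y^2u=0$ and forces $C\gtrsim\Lambda$ at $\rho\sim\Lambda^{-1/2}$. So $C=C(\nu)$ should be read as $C(\nu,|T|,n,N,\mathbf a,p)$.

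\emph{The fallback.} Using Proposition~\ref{prp:poincare with perturbation} would not help in Step~1. It controls lower-order terms of $v-P$ by $\nabla_{\mathbf a}v$ itself, which is circular when the aim is to bound $\nabla_{\mathbf a}v$ by $\|v\|_{L^2}$. Interpolation is the right tool.
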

In fact, it is possible to show the above decay estimates hold for all $1\leq p<\infty$, but we do not pursue this here.

\begin{lemma}\cite[Theorem 12.11]{GiustiItalian}\label{thm12IIGiusti}
Suppose that a function $u$ solves
\begin{equation*}
\begin{cases}
\n^{*} T \n u = \sum_{\langle \alpha, \mbf a^{-1} \rangle \leq 1} \p^{\alpha} f_{\alpha} \text{ in } Q,\\
u \in \WW^{\mbf{a},2}_{0}(Q),
\end{cases}
\end{equation*}
where $T$ is a constant strictly $\mbf{a}$-quasielliptic tensor. Assume that all the functions $f_\alpha$ for $\alpha \not= 0$ are of class
$\mathfrak{L}_{\mbf{a}}^{2,1}(Q)$, and that $f_0 \in \LL^\infty(Q)$. Then, for all $\alpha$ with $\langle \alpha, \mbf{a}^{-1} \rangle \leq 1$, we have
$\p^\alpha u \in \mathfrak{L}_{\mbf{a}}^{2,1}(Q)$ and
$$
\sum_{\langle \alpha, \mbf a^{-1} \rangle \leq 1} \| \p^\alpha u \|_{\mathfrak{L}_{\mbf{a}}^{2,1}(Q)}^2 \leq C\left( \|u\|_{\WW^{\mbf{a},2}(Q)}^2
+ \sum_{0 < \langle \alpha, \mbf a^{-1} \rangle \leq 1} \| f_\alpha \|_{\mathfrak{L}_{\mbf{a}}^{2,1}(Q)}^2 + \|f_0\|_{L^\infty(Q)}^2 \right),
$$
where the constant $C = C(\nu)$ only depends on the $\mbf{a}$-quasiellipticity constant of $T$.
\end{lemma}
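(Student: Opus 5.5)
The plan is the classical Campanato perturbation argument, carried out in the anisotropic scaling where $|Q_r| \approx r^{|\mbf{a}^{-1}|}$. This makes $\mathfrak{L}^{2,1}_{\mbf{a}}$ a BMO-type space: the mean oscillation of $\p^\alpha u$ squared, integrated over $I(x_0,r)$, should be $\lesssim r^{|\mbf{a}^{-1}|}$.

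\textbf{Step 1 (global bound).} Testing the equation with $u$ itself and using the quasiellipticity \eqref{quasiellipiticity eqn} bounds $\Vert u\Vert_{\WW^{\mbf{a},2}(Q)}$ by the data. This controls the large-scale oscillations.

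\textbf{Step 2 (interior points).} Fix $x_0$ and $r$ with $Q_r(x_0)\Subset Q$. Split $u=v+w$ on $Q_r(x_0)$, where $w\in \WW^{\mbf{a},2}_0(Q_r(x_0))$ solves
\[
\n^*T\n w=\sum_\alpha \p^\alpha f_\alpha ,
\]
so that $v$ solves the homogeneous system $\n^*T\n v=0$. By Corollary~\ref{propAEllipticIsHypoelliptic}, $v$ is smooth, and Lemma~\ref{lemmaZeroRHSEstimate} applies to it.

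\emph{Energy estimate for $w$.} Test the equation for $w$ with $w$ itself. For each $\alpha\neq 0$, replace $f_\alpha$ by $f_\alpha-(f_\alpha)_{Q_r}$; this is legitimate because $\int \p^\alpha w=0$ for $w$ with zero boundary values.
\begin{itemize}
\item For $\alpha\in\HH$, Cauchy--Schwarz bounds these terms by $\Vert f_\alpha-(f_\alpha)_r\Vert_{\LL^2(Q_r)}\Vert\n w\Vert_{\LL^2}$.
\item For lower-order $\alpha$, Proposition~\ref{prp:poincare with perturbation} (with $z_0=0$, so $P=0$ for $w\in\WW^{\mbf{a},2}_0$) gives $\Vert\p^\alpha w\Vert_{\LL^2}\lesssim r^{1-\langle\alpha,\mbf{a}^{-1}\rangle}\Vert\n w\Vert_{\LL^2}$.
\item The $f_0$ term is bounded by $\Vert f_0\Vert_\infty r^{|\mbf{a}^{-1}|/2+1}\Vert \n w\Vert_{\LL^2}$.
\end{itemize}
Absorbing $\Vert\n w\Vert_{\LL^2}$ then yields
\[
\int_{Q_r}|\n w|^2 \lesssim r^{|\mbf{a}^{-1}|}\Big(\sum_{\alpha\neq0}\Vert f_\alpha\Vert^2_{\mathfrak{L}^{2,1}_{\mbf{a}}}+\Vert f_0\Vert^2_\infty\Big).
\]
The same Poincaré inequality gives the analogous bound for every lower-order derivative $\p^\alpha w$.

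\emph{Decay and iteration.} Combine this with the decay estimate of Lemma~\ref{lemmaZeroRHSEstimate} for $v$, with exponent $|\mbf{a}^{-1}|+2/\max a_i$, which is strictly larger than $|\mbf{a}^{-1}|$. Writing $\phi(\rho)=\int_{Q_\rho}|\p^\alpha u-(\p^\alpha u)_\rho|^2$, this gives
\[
\phi(\rho)\le C(\rho/r)^{|\mbf{a}^{-1}|+2/\max a_i}\phi(r)+C\,r^{|\mbf{a}^{-1}|}K ,
\]
where $K$ is the data quantity on the right-hand side of the claim. The standard Campanato iteration lemma then yields $\phi(\rho)\lesssim \rho^{|\mbf{a}^{-1}|}\bigl(r^{-|\mbf{a}^{-1}|}\phi(r)+K\bigr)$. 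That is the $\mathfrak{L}^{2,1}_{\mbf{a}}$ bound at interior points, with the large-scale term controlled by Step 1.

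\textbf{Step 3 (boundary points).} Boxes $I(x_0,r)=Q_r(x_0)\cap Q$ that meet $\p Q$ need a boundary analogue of Lemma~\ref{lemmaZeroRHSEstimate}. Here the comparison function $v$ solves the homogeneous system on $Q_r(x_0)\cap Q$ with zero Dirichlet data on the flat face $\p Q\cap Q_r(x_0)$. Since $Q$ is a box, the faces are coordinate hyperplanes. I would prove the decay in two stages:
\begin{itemize}
\item First, difference quotients in tangential directions together with a Caccioppoli inequality give tangential regularity of $v$.
\item Second, the equation recovers the normal derivatives, using the semiellipticity from Corollary~\ref{propAEllipticIsHypoelliptic} (the coefficient of $\p_i^{2a_i}$ is invertible).
\end{itemize}
Poincaré inequalities for functions vanishing on a face replace the mean subtraction there.

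\textbf{Main obstacle.} I expect this boundary decay estimate, especially near edges and corners of $Q$ where several faces meet, to be the hardest step. My first attempt would be to reduce to a single face by scaling: a box centred near a corner contains a comparable box touching at most the faces through that corner, and I would treat those faces one at a time by the tangential-regularity argument.
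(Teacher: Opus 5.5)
Your Steps~1--2 are the standard Campanato perturbation scheme. That is also the mechanism behind Giusti's theorem; the paper itself gives no argument beyond citing \cite{GiustiItalian}, and \cite{Pros} for the version with lower-order terms. The following parts of your interior argument are correct:
the splitting $u=v+w$;
the mean subtraction for $\alpha\neq 0$;
the bounds on the lower-order and $f_0$ terms;
the iteration that exploits the gap between the exponents $|\mbf a^{-1}|+2/\max a_i$ and $|\mbf a^{-1}|$.
There is one small slip. Proposition~\ref{prp:poincare with perturbation} does not give $P=0$ for $w\in\WW^{\mbf a,2}_0(Q_r)$. The zero-trace Poincaré inequality you need follows instead by extending $w$ by zero to $Q_{2r}$, applying the proposition there, and controlling the polynomial on $Q_{2r}\setminus Q_r$, where $w=0$.

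The genuine gap is Step~3. The lemma is a global statement on $Q$, and that is exactly what the Stampacchia interpolation in Theorem~\ref{propLpBoundsLinear} uses. Your interior estimate does not close near $\p Q$ on its own: for $x_0$ at anisotropic distance $d$ from $\p Q$, the large-scale term $d^{-|\mbf a^{-1}|}\phi(d)$ is not controlled by Step~1. So everything rests on a decay estimate for the homogeneous Dirichlet problem on $Q_r(x_0)\cap Q$, which you only sketch. Near a single face your sketch is the right idea, although with mixed multi-indices in $\HH$ the recovery of normal derivatives has to be organised as an induction on the normal order. Your plan for edges and vertices, however, fails. Reducing to boxes centred at a corner is fine; treating ``those faces one at a time'' is not, for the following reasons.
\begin{enumerate}[(a)]
\item Centred at a vertex, the sets $Q_r\cap Q$ are all anisotropic dilates of the same corner box. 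No rescaling separates the faces, and the comparison function must vanish on all of them simultaneously.
\item A difference quotient in direction $e_k$ destroys the boundary condition on the faces $\{x_k=\mathrm{const}\}$. At an edge you therefore gain regularity only along the edge, and at a vertex no coordinate direction is admissible.
\item The semiellipticity step cannot supply the missing derivatives either. Solving the equation for $\p_i^{2a_i}v$ requires control of high-order derivatives in the directions normal to the other faces, and these were never obtained.
\end{enumerate}
The decay estimate at edges and vertices is thus a genuinely new statement. In classical elliptic theory it is governed by the singular exponents of the corner problem, not by the single-face estimates. For example, for a scalar second-order operator one maps the right-angled corner onto a convex wedge, whose leading exponent exceeds $1$. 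Your proposal supplies no mechanism for this estimate. To complete the argument you must either prove such an estimate for the model problem on the anisotropic orthant, or, as the paper does, take the global $\mathfrak{L}^{2,1}_{\mbf a}(Q)$ result from Giusti and check only that the lower-order terms are absorbed as in your Step~2.
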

The above lemma differs slightly from Giusti's paper \cite{GiustiItalian} in we explicitly include lower order terms on the right-hand side instead
of $\sum_{\langle \alpha, \mbf{a}^{-1} \rangle = 1} \p^{\alpha} f_{\alpha}$. Despite this, the proof remains basically unchanged,
and full details can be found in \cite{Pros}.  
\begin{theorem}
\label{propLpBoundsLinear}
Let $T$ be a constant, strictly $\mbf a$-quasielliptic tensor. For any $2 \leq p < \infty$ the equation
\begin{equation}\label{eqPropLpBounds}
\begin{cases}
\n^{*} T \n u = f \text{ in } Q,\\
u \in \WW^{\mbf{a},p}_{0}(Q)
\end{cases}
\end{equation}
with $f \in \WW^{-\mbf{a},p}(Q)$ is uniquely solvable and the solution $u$ satisfies 
$$
\|u\|_{\WW^{\mbf{a},p}_{0}(Q)} \leq C \|f\|_{\WW^{-\mbf{a},p}(Q)},
$$ 
with the constant $C = C(p, \nu)$ depending only on $p$ and the ellipticity constant of $T$.
\end{theorem}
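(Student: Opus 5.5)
We would argue in three steps: first $p=2$ by Lax--Milgram, then $p>2$ by a Campanato/interpolation argument built on Lemma \ref{thm12IIGiusti}, and finally uniqueness.

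\textbf{The case $p=2$.} The bilinear form $B(u,\phi)=\int_Q T\n u\cdot\n\phi\,\dd x$ is bounded on $\WW^{\mbf a,2}_0(Q)$. By \eqref{quasiellipiticity eqn} it satisfies $B(\phi,\phi)\ge\nu\|\n\phi\|_{\LL^2}^2$. Proposition \ref{prp:poincare with perturbation}, applied with $z_0=0$ to functions with zero boundary values (so that $P=0$), together with Proposition \ref{prp:Wa,p}(ii), shows that $\|\n\phi\|_{\LL^2(Q)}$ is an equivalent norm on $\WW^{\mbf a,2}_0(Q)$. Hence $B$ is coercive, and Lax--Milgram gives a unique solution with $\|u\|\le C(\nu)\|f\|_{\WW^{-\mbf a,2}}$.

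\textbf{The case $p>2$: strategy.} Since $\WW^{-\mbf a,p}\subset\WW^{-\mbf a,2}$ on the bounded box $Q$, the solution $u$ exists and is unique by the $p=2$ step. It remains to prove the a priori bound $\|\n u\|_{\LL^p}\lesssim\|f\|_{\WW^{-\mbf a,p}}$. I would obtain it by interpolation for the linear solution operator $S\colon (f_\beta)_{\beta\in\mathcal A}\mapsto (\p^\alpha u)_{\alpha\in\mathcal A}$, where $f=\sum_\beta\p^\beta f_\beta$. Two endpoint bounds are needed:
\begin{itemize}
\item[(a)] $S\colon\LL^2\to\LL^2$, which is the $p=2$ step;
\item[(b)] $S\colon\LL^\infty\to\mathrm{BMO}_{\mbf a}(Q)$, measured over anisotropic boxes intersected with $Q$ as in Definition \ref{def: campanato}.
\end{itemize}

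\textbf{The BMO endpoint.} For (b) I would use the Campanato scheme behind Lemma \ref{thm12IIGiusti}. Freeze a box $Q_r(x_0)$ and split $u=v+w$, where $v$ solves the homogeneous equation in $Q_r$ with $v-u\in\WW^{\mbf a,2}_0(Q_r)$. Lemma \ref{lemmaZeroRHSEstimate} gives the decay $(\rho/r)^{|\mbf a^{-1}|+2/\max a_i}$ for oscillations of $\p^\alpha v$. The energy estimate gives $\|\n w\|_{\LL^2(Q_r)}^2\lesssim r^{|\mbf a^{-1}|}\sum_\beta\|f_\beta-(f_\beta)_r\|^2_{\LL^\infty}$; here the lower-order $f_\beta$ carry extra powers of $r$ after scaling, and so are harmless. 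Iterating yields
\[
\int_{Q_\rho}|\p^\alpha u-(\p^\alpha u)_\rho|^2\le C\rho^{|\mbf a^{-1}|}\Bigl(\sum_\beta\|f_\beta\|_{\LL^\infty}^2+\|u\|^2_{\WW^{\mbf a,2}}\Bigr).
\]
This is $\mathfrak L^{2,1}_{\mbf a}=\mathrm{BMO}_{\mbf a}$ control, which is precisely Lemma \ref{thm12IIGiusti} with $\LL^\infty$ data, since $\LL^\infty\subset\mathfrak L^{2,1}_{\mbf a}$. The last term is bounded by the $\LL^2$ theory, hence by $\|f_\beta\|_{\LL^\infty}$.

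\textbf{Interpolation.} The pair $(\LL^2,\mathrm{BMO})$ on the doubling metric measure space $(Q,\delta_{\mbf a},\Leb^n)$ interpolates to $\LL^p$ for $2<p<\infty$ by the Fefferman--Stein sharp maximal function argument, which works verbatim with anisotropic boxes. This gives $\|\p^\alpha u\|_{\LL^p}\le C(p,\nu)\sum_\beta\|f_\beta\|_{\LL^p}$. Taking the infimum over representations, and using the equivalence stated before Lemma \ref{lemmaLegendreHadamard}, gives the claimed bound. Solutions lie in $\WW^{\mbf a,p}_0(Q)$ by approximating $f$ with smooth data.

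\textbf{Uniqueness.} This follows from uniqueness in $\WW^{\mbf a,2}_0$, which contains $\WW^{\mbf a,p}_0(Q)$ for $p\ge2$.

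\textbf{Main obstacle.} The main obstacle is step (b) near $\partial Q$. The Campanato estimate must hold uniformly for the sets $I(x_0,r)$ meeting the boundary, so one needs a boundary version of the decay in Lemma \ref{lemmaZeroRHSEstimate} for solutions vanishing on a face. That is exactly the content of Giusti's global statement in Lemma \ref{thm12IIGiusti}, which I would invoke directly rather than redo. Applying the Stein-type interpolation on the bounded box, rather than on all of $\Rn$, also requires care: I would use the version of the sharp function theorem for spaces of homogeneous type.
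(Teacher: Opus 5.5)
Your proposal is correct and follows essentially the same route as the paper: Lax--Milgram for $p=2$, and for $p>2$ interpolation between the $\LL^2\to\LL^2$ bound and the $\LL^\infty\to\mathfrak{L}^{2,1}_{\mbf a}(Q)$ (anisotropic BMO) bound from Giusti's global Campanato estimate, Lemma \ref{thm12IIGiusti}. The paper cites this interpolation as Stampacchia's theorem, which is the same mechanism as your sharp-function argument, and it splits the data into a higher-order part (treated as $\mathfrak{L}^{2,1}_{\mbf a}\to\mathfrak{L}^{2,1}_{\mbf a}$) and a zero-order part $f_0$ ($\LL^\infty\to\mathfrak{L}^{2,1}_{\mbf a}$), whereas you treat all $f_\beta$ with a single operator via $\LL^\infty\subset\mathfrak{L}^{2,1}_{\mbf a}$ — a purely cosmetic difference.
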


\begin{proof}[Proof of Theorem \ref{propLpBoundsLinear}]
Note that for $p=2$ the theorem is a consequence of the Lax-Milgram theorem. Thus, we only need to justify the $\LL^p$ bounds for $p > 2$,
which we justify via interpolation (see \cite[Chapter 10]{GG}). We need to show that the operators mapping $(f_\alpha)_{\alpha \not= 0}$ to the
$\mbf a$-gradient of the solution of 
\begin{equation}\label{eqLpBoundsHigherOrder}
\begin{cases}
\n^{*} T \n u = \sum_{0 < \langle \alpha, \mbf a^{-1} \rangle \leq 1} \p^\alpha f_\alpha \text{ in } Q,\\
u \in \WW^{\mbf{a},p}_{0}(Q),
\end{cases}
\end{equation}
and mapping $f_0$ to the $\mbf{a}$-gradient of the solution of
\begin{equation}\label{eqLpBoundsZeroOrder}
\begin{cases}
\n^{*} T \n u = f_0 \text{ in } Q,\\
u \in \WW^{\mbf{a},p}_{0}(Q),
\end{cases}
\end{equation}
are of strong $(p,p)$ type with norms bounded by a constant depending only on $p$ and the strict $\mbf a$-quasiellipticity constant of $T$. 

We know from the Lax--Milgram theorem that both operators are of strong type $(2,2)$. On the other hand, Lemma \ref{thm12IIGiusti} tells
us that the operator mapping $(f_\alpha)_\alpha$ to the $\mbf a$-gradient of the solution of \eqref{eqLpBoundsHigherOrder} maps $\mathfrak{L}_{\mbf{a}}^{2,1}(Q)$
to $\mathfrak{L}_{\mbf{a}}^{2,1}(Q)$ continuously. The same estimate shows that the operator given by \eqref{eqLpBoundsZeroOrder} is continuous as a mapping
from $\LL^\infty(Q)$ to $\mathfrak{L}_{\mbf{a}}^{2,1}(Q)$. Hence, the Stampacchia interpolation theorem (see \cite[Theorem 2.14]{GG} and 
\cite[Theorem 2.1]{GiustiItalian}) lets us conclude that both operators are of strong type $(p,p)$, and thus so is their sum, which ends the proof.
\end{proof}
\end{section}

\begin{section}{Auxiliary lemmas}
In this section we collect some elementary lemmas that will be needed to carry out the harmonic approximation procedure in Section 5. 
Throughout we write $V_p(z)=(1+|z|^2)^{p/2}-1$, where $1<p<\infty$. We recall some standard facts about $V_p$ \cite[Lemmas 3.6-3.7]{gk24}.

\begin{lemma}\label{lem: new V function lemma}
Let $1<p<\infty$. Then
\begin{enumerate}[(i)]
    \item $V_p(z) \approx |z|^2 \mbf{1}_{|z| \leq 1}+|z|^p \mbf{1}_{|z| > 1}$. In particular, if $p \geq 2$ then $V_p(z) \approx |z|^2+|z|^p$
    \item $t \mapsto V_p(t^{1/r})$ is convex for all $1 \leq r \leq \min (2,p)$
    \item $V_p(\lambda z) \leq \begin{cases} 
\lambda ^{\max\left\{2,p\right\}}V_p(z) & \text{if } \lambda \geq 1\\
\lambda ^{\min\left\{2,p\right\}}V_p(z) & \text{if } \lambda \leq 1
\end{cases}$
    \item $V_p(z+w) \lesssim V_p(z)+V_p(w)$
    \item $V_p(z+w)-V_p(z)-V_p'(z)\cdot w \approx (1+|z|^2+|w|^2)^{\frac{p-2}{2}}|w|^2$.
\end{enumerate}
\end{lemma}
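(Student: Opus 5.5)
These are elementary facts about $V_p(z)=\Phi(|z|)$ with $\Phi(t)=(1+t^2)^{p/2}-1$, so I will reduce everything to one-variable calculus in $t=|z|$. I prove (i) first, since (iii) and (iv) follow from it.

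\emph{Proof of (i).} For $t\le 1$ the mean value theorem applied to $s\mapsto (1+s)^{p/2}$ on $[0,t^2]$ gives $\Phi(t)=\tfrac p2(1+\sigma)^{p/2-1}t^2$ for some $\sigma\in[0,1]$. The factor $(1+\sigma)^{p/2-1}$ lies between $1$ and $2^{|p/2-1|}$, so $\Phi(t)\approx t^2$. For $t\ge1$ we have $(1+t^2)^{p/2}\approx t^p$. The subtraction of $1$ is harmless because $\Phi(t)\ge \Phi(1)=2^{p/2}-1>0$ while $t^p\ge1$. For $p\ge2$ note $t^2\le t^p$ when $t\ge1$ and $t^p\le t^2$ when $t\le1$, which gives $V_p\approx|z|^2+|z|^p$.

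\emph{Proof of (ii).} Put $g(s)=\Phi(s^{1/r})=(1+s^{2/r})^{p/2}-1$ for $s\ge0$. I will show that $g'$ is nondecreasing. Write $g'(s)=\frac pr s^{2/r-1}(1+s^{2/r})^{p/2-1}$. Substituting $u=s^{2/r}$, this equals a positive constant times $u^{1-r/2}(1+u)^{p/2-1}$, and $s\mapsto u$ is increasing. If $p\ge2$, both factors are nondecreasing because $r\le2$. If $p<2$, write the expression as $u^{1-r/2}(1+u)^{p/2-1}=\bigl(\tfrac{u}{1+u}\bigr)^{1-p/2}u^{(p-r)/2}$. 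Both factors are nondecreasing since $r\le p$. Hence $g$ is convex. This is the only step needing care, and the factorization above handles it.

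\emph{Proof of (iii).} Convexity of $g$ together with $g(0)=0$ makes $s\mapsto g(s)/s$ nondecreasing. Take $r=\min(2,p)$ and $s=t^r$. Then for $\lambda\le1$ we get $\Phi(\lambda t)\le\lambda^{r}\Phi(t)$.

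For $\lambda\ge1$ and $p\ge2$, consider $\Phi(t)/t^p=((1+t^2)^{p/2}-1)/t^p$. Its derivative has the sign of $t^2(1+t^2)^{p/2-1}-(1+t^2)^{p/2}+1=1-(1+t^2)^{p/2-1}\le0$, so it is nonincreasing. This gives $\Phi(\lambda t)\le\lambda^p\Phi(t)$.

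For $\lambda\ge1$ and $p<2$, the same computation applied to $\Phi(t)/t^2$ gives a derivative with the sign of $p t^2(1+t^2)^{p/2-1}-2(1+t^2)^{p/2}+2$. This is $\le0$ by concavity of $s\mapsto(1+s)^{p/2}$: indeed $(1+s)^{p/2}-1\ge \frac p2 s(1+s)^{p/2-1}$ with $s=t^2$. Hence $\Phi(\lambda t)\le\lambda^2\Phi(t)$.

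\emph{Proof of (iv).} $\Phi$ is increasing, so $V_p(z+w)\le \Phi(2\max(|z|,|w|))$. Applying (iii) with $\lambda=2$ gives $V_p(z+w)\le 2^{\max(2,p)}\bigl(V_p(z)+V_p(w)\bigr)$.

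\emph{Proof of (v).} Taylor's formula gives
\[
V_p(z+w)-V_p(z)-V_p'(z)\cdot w=\int_0^1(1-s)\,V_p''(z+sw)[w,w]\,\dd s,
\]
where
\[
V_p''(\zeta)[w,w]=p(1+|\zeta|^2)^{\frac{p-2}{2}}\Bigl(|w|^2+(p-2)\frac{(\zeta\cdot w)^2}{1+|\zeta|^2}\Bigr).
\]
Since $0\le(\zeta\cdot w)^2/(1+|\zeta|^2)<|w|^2$, this is $\approx(1+|\zeta|^2)^{(p-2)/2}|w|^2$, with constants $\min(1,p-1)$ and $\max(1,p-1)$. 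It remains to check
\[
\int_0^1(1-s)(1+|z+sw|^2)^{\frac{p-2}{2}}\,\dd s\approx(1+|z|^2+|w|^2)^{\frac{p-2}{2}}.
\]
This is the standard estimate from the Acerbi--Fusco/Giaquinta--Modica calculus. For the upper bound when $p\ge2$ and the lower bound when $p<2$, use $|z+sw|\le|z|+|w|$. For the reverse bounds, restrict to the set of $s\in[0,\tfrac12]$ where $|z+sw|\gtrsim|z|+|w|$. This set has measure bounded below, since $|z+sw|\ge\max(|z|-s|w|,\,s|w|-|z|)$ and at least one of the intervals $s\in[0,\tfrac18]$ or $s\in[\tfrac38,\tfrac12]$ works. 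For $p<2$, the upper bound uses the integrability of $|z+sw|^{p-2}$-type singularities along the segment. I would simply cite that standard lemma.
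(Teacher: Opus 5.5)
Your proof is correct. The paper gives no argument for this lemma; it quotes these properties of $V_p$ from \cite[Lemmas 3.6--3.7]{gk24}, so what you have written is a self-contained substitute for that citation.

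Your route reduces everything to the radial profile $\Phi(t)=(1+t^2)^{p/2}-1$:
\begin{itemize}
\item (ii) follows from monotonicity of $g'$, and the factorisation $u^{1-r/2}(1+u)^{p/2-1}=(u/(1+u))^{1-p/2}u^{(p-r)/2}$ shows exactly where $r\le\min(2,p)$ is needed;
\item (iii) follows for $\lambda\le1$ from (ii) with $r=\min(2,p)$, and for $\lambda\ge1$ from monotonicity of $\Phi(t)/t^{\max(2,p)}$;
\item (iv) follows from (iii) with the explicit constant $2^{\max(2,p)}$;
\item (v) follows from Taylor's formula, the two-sided bound on $V_p''$, and the Acerbi--Fusco type estimate for $\int_0^1(1-s)(1+|z+sw|^2)^{(p-2)/2}\,\mathrm{d}s$.
\end{itemize}
Compared with the bare citation, this makes the $p$-dependence of the constants visible (for instance, the $p<2$ bounds in (v) degenerate as $p\searrow1$). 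It also shows that the only non-elementary input is a one-dimensional integral estimate.

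Two small remarks.

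In (i), for $p<2$ the factor $(1+\sigma)^{p/2-1}$ lies in $[2^{p/2-1},1]$, not between $1$ and $2^{|p/2-1|}$. The conclusion $\Phi(t)\approx t^2$ is unaffected.

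In (v), you leave the upper bound for $1<p<2$ to a citation, but it is quick to close. There is actually no singularity, since the integrand is at most $1$; the only issue is that it can be of size $1$ near the point of the segment closest to the origin.
\begin{itemize}
\item If $|z|+|w|\le1$, the bound is trivial.
\item Otherwise use $(1+x^2)^{(p-2)/2}\le x^{p-2}$. If $|w|\le|z|/2$, then $|z+sw|\ge|z|/2\ge(|z|+|w|)/3$ for all $s\in[0,1]$.
\item If $|w|>|z|/2$, then $|z+sw|\ge|w|\,|s-s_0|$ with $s_0=-z\cdot w/|w|^2$. Hence $\int_0^1|z+sw|^{p-2}\,\mathrm{d}s\le\frac{2}{p-1}|w|^{p-2}\le\frac{2\cdot 3^{2-p}}{p-1}(|z|+|w|)^{p-2}$.
\end{itemize}
For $|z|+|w|>1$, the quantity $(|z|+|w|)^{p-2}$ is $\lesssim(1+|z|^2+|w|^2)^{(p-2)/2}$, which finishes the estimate. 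The paper uses (v) for all $1<p<\infty$ (e.g.\ in Lemma~\ref{lem:equivalence of quasiconvxity}), so adding these lines makes your argument fully self-contained.
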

Given $F \in \CC^{2}(\M )$, for each $z_0 \in \M$, the shifted integrand is defined as
$$
F_{z_0}(z)=F(z+z_0)-F(z_0)-F'(z_0) \cdot z.
$$ 

\begin{lemma}\label{lem:equivalence of quasiconvxity}
Let $1<p<\infty$, $F \in \CC^{2}(\M )$ satisfy (H1). Then (H3) is equivalent to the statement: There exists $\lambda_M>0$ such that
$\lambda_M \int_{Q_R}V_p(\n \varphi) \, \dd x \leq \int_{Q_R} F_{z_0}(\n \varphi) \, \dd x $ for all $|z_0| \leq M$ and $\varphi \in \WW_{0}^{\mbf{a},p}(Q_R , \RN )$.

\end{lemma}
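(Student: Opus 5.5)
We prove the two implications separately. Two tools do the work: a scaling argument that reduces everything to the unit box $Q=Q_1(0)$, and the elementary identity
$$
\int_{Q_R} F_{z_0}(\n\varphi)\,\dd x=\int_{Q_R}\bigl(F(z_0+\n\varphi)-F(z_0)\bigr)\,\dd x,
$$
valid for $\varphi\in\WW^{\mbf a,p}_0(Q_R,\RN)$. The identity holds because $\int_{Q_R}\n\varphi\,\dd x=0$, as one sees by approximating $\varphi$ with $\CC^\infty_c$ maps. The substitution $x=x_0+R\odot y$ with $\psi(y)=R^{-1}\varphi(x_0+R\odot y)$ gives $\n\psi(y)=\n\varphi(x_0+R\odot y)$. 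Hence both sides of the claimed inequality scale by the same factor $R^{|\mbf a^{-1}|}$, and it suffices to work on $Q$.

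\textbf{(H3) $\Rightarrow$ statement.} Fix $M$ and take $l_M$ from (H3). For $|z_0|\le M$ and $\varphi\in\CC^\infty_c(Q)$, the $\mbf a$-quasiconvexity of $F-l_MV_p$ at $z_0$ together with the identity above gives
$$
\int_Q F_{z_0}(\n\varphi)\,\dd x\ \ge\ l_M\int_Q\bigl(V_p(z_0+\n\varphi)-V_p(z_0)\bigr)\,\dd x=l_M\int_Q (V_p)_{z_0}(\n\varphi)\,\dd x .
$$
By Lemma \ref{lem: new V function lemma}(v),
$$
(V_p)_{z_0}(w)\approx(1+|z_0|^2+|w|^2)^{\frac{p-2}{2}}|w|^2 .
$$
For $p\ge2$ this is at least $c(1+|w|^2)^{\frac{p-2}{2}}|w|^2\gtrsim V_p(w)$. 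For $p<2$ we use $|z_0|\le M$ to get $(V_p)_{z_0}(w)\ge c_M(1+|w|^2)^{\frac{p-2}{2}}|w|^2\approx c_M V_p(w)$. Either way we may take $\lambda_M=c\,c_M\, l_M$. To pass from $\CC^\infty_c(Q)$ to $\WW_0^{\mbf a,p}(Q)$ we approximate in norm. Both sides are continuous in the strong $\LL^p$ topology of $\n\varphi$: (H1) and (H2) make $F_{z_0}$ continuous with $p$-growth, so dominated convergence (Vitali) applies along an a.e. convergent subsequence.

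\textbf{Statement $\Rightarrow$ (H3).} Fix $M$ and $\lambda_{M}$ as in the statement, and apply the statement with $2M$ in place of $M$ to obtain $\lambda_{2M}$. Let $|z_0|\le M$. We need an $l>0$ such that
$$
\int_Q\bigl(F_{z_0}(\n\varphi)-l\,(V_p)_{z_0}(\n\varphi)\bigr)\,\dd x\ge0 \quad\text{for all }\varphi,
$$
which by the identity is exactly quasiconvexity of $F-lV_p$ at $z_0$. For this it suffices to have $(V_p)_{z_0}(w)\le C_M V_p(w)$ for all $w$ and $|z_0|\le M$; then $l=\lambda_M/C_M$ works. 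This inequality follows from Lemma \ref{lem: new V function lemma}(v) and (i). The expression $(1+|z_0|^2+|w|^2)^{\frac{p-2}{2}}|w|^2$ is comparable, with constants depending on $M$, to $(1+|w|^2)^{\frac{p-2}{2}}|w|^2$. That quantity behaves like $|w|^2$ for small $|w|$ and like $|w|^p$ for large $|w|$, so it is $\approx V_p(w)$.

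\textbf{Main obstacle.} The only delicate point is the $M$-dependence of the comparison between $(V_p)_{z_0}$ and $V_p$. In the direction (H3) $\Rightarrow$ statement with $p<2$, the lower bound degenerates as $|z_0|\to\infty$; this is why the constants must be allowed to depend on $M$. No other part of the argument needs special care.
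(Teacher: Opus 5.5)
Your proof is correct and follows the same route as the paper: it combines the $M$-dependent comparability $(V_p)_{z_0}(w)\approx_M V_p(w)$ from Lemma \ref{lem: new V function lemma}(v) with $\int_{Q_R}\n\varphi\,\dd x=0$, and it also spells out the scaling and the density passage from $\CC^\infty_c$ to $\WW^{\mbf a,p}_0$, which the paper leaves implicit. Two cosmetic points: (H2) is not needed for the density step, since (H1) and $F\in\CC^1$ already give $|F_{z_0}(z)|\lesssim_M 1+|z|^p$, and the constant $\lambda_{2M}$ you introduce in the converse direction is never used.
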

\begin{proof}
Let $|z_0| \leq M$ and $\varphi \in \WW_0^{\mbf{a},p}(Q_R ,\RN )$. From Lemma \ref{lem: new V function lemma} (v) we have that
$$
V_p(\n \phi+z_{0})-V_{p}(z_{0})-V_{p}'(z_{0}) \cdot \n \phi \approx (1+|\n \phi |^{2}+|z_{0}|^{2})^{\frac{p-2}{2}}|\n \phi |^{2}
\approx_M (1+|\n \phi |^{2})^{\frac{p-2}{2}}|\n \phi|^{2} \approx V_{p}(\n \phi ).
$$
From the divergence theorem it therefore follows
$$
\int_{Q_R} \! V_{p}(\n \phi +z_{0})-V_{p}(z_{0}) \, \dd x \approx_M \int_{Q_{R}}V_{p}(\n \phi) \, \dd x.
$$
The claim then readily follows from the definition of $F_{z_0}$ and the divergence theorem. 
\end{proof}

\begin{lemma}\label{lem: linearisation of F}
Let $1<p<\infty$, $F \in \CC^{2}(\M )$ satisfy (H1) and (H2). Then for every $|z_0| \leq M$, the following hold:

\begin{enumerate}[(i)]
    \item $|F_{z_0}(z)| \lesssim_{M} V_p(z)$
    \item $|F'_{z_0}(z)| \lesssim_{M} V_p(z)/|z|$
    \item $|F''(z_0)z-F'_{z_0}(z)| \lesssim \omega_M(|z|)(|z|+|z|^{p-1})$ where $\omega_M \leq 1 $ is a non-negative concave increasing modulus of continuity.
\end{enumerate}
\end{lemma}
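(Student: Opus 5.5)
The three bounds follow from Taylor's formula applied to the shifted integrand $F_{z_0}$, splitting into the regimes $|z|\le 1$ and $|z|>1$. The calibration comes from Lemma \ref{lem: new V function lemma}(i): for $|z|\le1$ we have $V_p(z)\approx|z|^2$ and $V_p(z)/|z|\approx|z|$, while for $|z|>1$ we have $V_p(z)\approx|z|^p$ and $V_p(z)/|z|\approx|z|^{p-1}$. Throughout, $|z_0|\le M$ and constants may depend on $M$.

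For (ii) with $|z|>1$, we have $F'_{z_0}(z)=F'(z+z_0)-F'(z_0)$. By (H2),
\[
|F'_{z_0}(z)|\le L\bigl(2+|z+z_0|^{p-1}+|z_0|^{p-1}\bigr)\lesssim_M 1+|z|^{p-1}\lesssim|z|^{p-1}.
\]
For $|z|\le1$ we use that $F\in\CC^2$, so $F''$ is bounded by some $K_M$ on the compact ball $\{|w|\le M+1\}$. The mean value theorem then gives $|F'_{z_0}(z)|\le K_M|z|$.

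For (i) we write $F_{z_0}(z)=\int_0^1F'_{z_0}(tz)\cdot z\,\dd t$, since $F_{z_0}(0)=0$. Inserting (ii) gives $|F_{z_0}(z)|\lesssim_M\int_0^1 V_p(tz)/t\,\dd t$. This is $\lesssim V_p(z)$ because $V_p(tz)\le t^{\min(2,p)}V_p(z)$ by Lemma \ref{lem: new V function lemma}(iii) and $\min(2,p)>1$. Alternatively, one can use (H1) directly for $|z|>1$ and the $\CC^2$ Taylor bound for $|z|\le1$.

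For (iii) the natural modulus is
\[
\omega_M(s)=\sup\bigl\{\min\bigl(1,|F''(w)-F''(z_0)|/c\bigr): |z_0|\le M,\ |w-z_0|\le s,\ |w|\le M+1\bigr\},
\]
with a normalising constant $c$, replaced afterwards by its least concave majorant. This majorant is still increasing, still tends to $0$ at $0$ (by uniform continuity of $F''$ on compact sets), and is bounded by $1$. For $|z|\le1$ we write $F''(z_0)z-F'_{z_0}(z)=\int_0^1\bigl(F''(z_0)-F''(z_0+tz)\bigr)z\,\dd t$, which is bounded by $c\,\omega_M(|z|)|z|$. For $|z|>1$ we need $|F''(z_0)z-F'_{z_0}(z)|\lesssim\omega_M(|z|)|z|^{p-1}$. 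The left side is $\lesssim_M |z|+|z|^{p-1}$ by (ii) and boundedness of $F''(z_0)$. Since $\omega_M$ is increasing, $\omega_M(|z|)\ge\omega_M(1)>0$ whenever $F''$ is not constant near $z_0$, so we can absorb the constant $1/\omega_M(1)$ into the implied constant. If $\omega_M(1)=0$, we simply redefine $\omega_M(s)=\min(1,s)$ together with the sup above; this keeps all the required properties. The estimate then holds with the stated right-hand side $\omega_M(|z|)(|z|+|z|^{p-1})$, the implied constant depending on $M$.

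The main obstacle is this last point: arranging that $\omega_M$ is simultaneously concave, at most $1$, and not degenerate at large arguments. Taking the concave majorant of $\max\{\omega,\min(1,s)\}$ handles all three requirements at once.
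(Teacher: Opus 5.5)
Your proposal is correct and follows essentially the same route as the paper: you split into $|z|\ge 1$, handled by (H1)--(H2), and $|z|\le 1$, handled by Taylor's formula with $F''$ bounded and uniformly continuous on $\{|w|\le M+1\}$; the only cosmetic difference is that you obtain (i) by integrating (ii), whereas the paper uses a direct second-order Taylor expansion. Your normalisation of $\omega_M$ as the concave majorant of $\max\{\omega,\min(1,s)\}$ gives $\omega_M(|z|)=1$ for $|z|\ge 1$, which makes explicit a point the paper leaves implicit when it says the case $|z|\ge 1$ of (iii) ``follows directly from the growth assumptions''.
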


\begin{proof}
If $|z| \geq 1$ all estimates follow directly from the growth assumptions. For $|z| \leq 1$ we estimate
$|F_{z_0}(z)| \leq \int_0^1 \int_0^1| F''(z_0+stz)z\cdot z t |\dd s \dd t \leq \max_{|w| \leq M+1}|F''(w)| \cdot |z|^2 \lesssim_M V_p(z)$ which proves (i),
and similarly for (ii). For (iii), note that there exists a modulus of continuity $\omega_M \leq 1$ such that $|F''(v)-F''(w)| \leq C_M \omega_M(|v-w|)$
for all $|v|, |w| \leq {M+1}$. Therefore $|F''(z_0)z-F'_{z_0}(z)| \leq \int_0^1 |F''(z_0+tz)-F''(z_0) | \dd t |z| \leq C_M \omega_M (|z|)|z|$. 
\end{proof}
The following lemma is a simple variant of the standard iteration lemma  \cite[Lemma 6.1]{GG} for which we sketch the details.

\begin{lemma}\label{lem: iteration for V function}
Let $1<p<\infty$, and let $f \colon [R/2, R] \to [0,\infty)$ be a bounded function satisfying for all $R/2 \leq t < s \leq R,$
$$
f(t) \leq \theta f(s) + \sum_{0 \neq \beta \leq \alpha, \alpha \in \mathcal{H}} \int_{Q_R}V_p\left(\frac{\p^{\alpha-\beta}v}{\prod_{i=1}^n(s^{1/a_i}-t^{1/a_i})^{\beta_i}} \right) \, \dd x
$$
where $\theta \in [0,1)$ and $v \in \WW^{\mbf{a},p}(Q_R)$. Then there exists $C=C_{\mbf{a}, \theta, p}>0$ such that
$$
f(R/2) \leq C  \sum_{0 \neq \beta \leq \alpha, \alpha \in \mathcal{H}} \int_{Q_R}V_p\left(\frac{\p^{\alpha-\beta}v}{R^{\langle \beta, \mbf{a}^{-1}\rangle }}\right) \, \dd x
$$ 
\end{lemma}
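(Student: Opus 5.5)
The statement is Lemma~\ref{lem: iteration for V function}, and the plan is to mimic the proof of \cite[Lemma 6.1]{GG}: iterate the hypothesis along a geometric sequence of radii and sum the resulting geometric series. The only new features are that the perturbation is not a power of $(s-t)$ but an integral of $V_p$, and that the anisotropic denominators $\prod_i(s^{1/a_i}-t^{1/a_i})^{\beta_i}$ must be controlled. Both are handled by Lemma~\ref{lem: new V function lemma}(iii).

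Fix $\tau\in(0,1)$, to be chosen depending on $\theta,p,\mbf{a}$. Define $t_0=R/2$ and $t_{k+1}=t_k+(1-\tau)\tau^k R/2$, so that $t_k\nearrow R$. The mean value theorem applied to $r\mapsto r^{1/a_i}$ on $[R/2,R]$ gives
\[
t_{k+1}^{1/a_i}-t_k^{1/a_i}\geq c_{\mbf{a}}(t_{k+1}-t_k)R^{1/a_i-1}=c_{\mbf{a}}(1-\tau)\tau^k R^{1/a_i}/2 .
\]
Consequently
\[
\prod_i\bigl(t_{k+1}^{1/a_i}-t_k^{1/a_i}\bigr)^{\beta_i}\geq c\,\bigl((1-\tau)\tau^k\bigr)^{|\beta|}R^{\langle\beta,\mbf{a}^{-1}\rangle}.
\]
The argument of $V_p$ in the $k$-th step is therefore $\lambda_k\,\p^{\alpha-\beta}v/R^{\langle\beta,\mbf{a}^{-1}\rangle}$, with $\lambda_k\leq C(1-\tau)^{-|\beta|}\tau^{-k|\beta|}$; we may take $\lambda_k\ge 1$. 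Lemma~\ref{lem: new V function lemma}(iii) with $\lambda\geq1$ then yields
\[
V_p(\lambda_k w)\leq \lambda_k^{\max\{2,p\}}V_p(w)\leq C_\tau\,\tau^{-k\gamma}V_p(w),
\]
where $\gamma=\max\{2,p\}\max_{\alpha\in\mathcal H}|\alpha|$; the maximum exists because $\mathcal H$ is finite.

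Applying the hypothesis with $t=t_k$, $s=t_{k+1}$ and iterating $k$ times gives
\[
f(R/2)\leq \theta^k f(t_k)+C_\tau\sum_{j=0}^{k-1}(\theta\tau^{-\gamma})^j\,\mathcal{S},
\qquad
\mathcal{S}=\sum_{0\neq\beta\leq\alpha,\ \alpha\in\mathcal H}\int_{Q_R}V_p\Bigl(\frac{\p^{\alpha-\beta}v}{R^{\langle\beta,\mbf{a}^{-1}\rangle}}\Bigr)\,\dd x .
\]
Choose $\tau$ with $\theta\tau^{-\gamma}<1$, i.e.\ $\tau\in(\theta^{1/\gamma},1)$; this is possible since $\theta<1$. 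If $\theta=0$ a single step suffices. Because $f$ is bounded, $\theta^k f(t_k)\to0$, and letting $k\to\infty$ gives $f(R/2)\leq C\,\mathcal{S}$ with $C$ depending only on $\mbf a,\theta,p$.

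The step that needs care is the uniform conversion of the anisotropic differences $s^{1/a_i}-t^{1/a_i}$ into $(s-t)R^{1/a_i-1}$, together with absorbing the resulting powers of $\tau^{-k}$ via the superlinear scaling bound (iii). This is exactly what forces $\tau$ to be chosen close to $1$ in terms of $\theta$, $p$ and $\max_{\alpha\in\mathcal H}|\alpha|$. Everything else is the standard iteration argument.
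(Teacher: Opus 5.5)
Your proposal is correct and follows the paper's proof essentially verbatim: the same radii $R/2+(1-\tau^k)R/2$, the same mean value theorem bound for $r\mapsto r^{1/a_i}$ on $[R/2,R]$, the same use of Lemma~\ref{lem: new V function lemma}(iii) to pull out $\tau^{-k|\beta|\max\{2,p\}}$, and the same geometric series with $\tau$ chosen close to $1$. You also track the $(1-\tau)^{-|\beta|}$ factor explicitly in $C_\tau$, which the paper's display silently absorbs into $C$; the condition $\lambda_k\geq 1$ you impose in fact holds automatically, since $t_{k+1}^{1/a_i}-t_k^{1/a_i}\leq R^{1/a_i}$.
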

\begin{proof}
By applying the mean value theorem to $x \mapsto x^{1/a_i}$, we see that
$$
\frac{1}{a_i}R^{1/a_i-1}(s-t) \leq s^{1/a_i}-t^{1/a_i} \leq \frac{1}{a_i}(R/2)^{1/a_i-1}(s-t)
$$
for all $R/2 \leq t<s\leq R$. In particular
$$
f(t) \leq \theta f(s)+C\sum_{0 \neq \beta \leq \alpha, \alpha \in \mathcal{H}} \int_{Q_R}V_p\left(\frac{\p^{\alpha-\beta}v}{R^{\langle \beta, \mbf{a}^{-1} \rangle -|\beta|}(s-t)^{|\beta|}} \right) \dd x.
$$
Now in the standard fashion, select $r_j=R/2 + (1-\lambda^j)(R/2)$ where $\lambda \in (0,1)$ will be specified.
Then it follows $V_p\left(\frac{\p^{\alpha-\beta}v}{R^{\langle \beta, \mbf{a}^{-1} \rangle -|\beta|}(r_{j+1}-r_j)^{|\beta|}} \right) \leq
C\lambda^{-|\beta|j  \max(2,p)}V_p\left(\frac{\p^{\alpha-\beta}v}{R^{\langle \beta, \mbf{a}^{-1}\rangle }} \right)$. Thus, iteration and boundedness of $f$ guarantees that 
$$
f(R/2) \leq C\sum_{j=0}^\infty \left(\frac{\theta}{\lambda^{{\max_{\beta \in \mathcal{A}} |\beta|} \max(2,p)
}}\right)^j \sum_{0 \neq \beta \leq \alpha, \alpha \in \mathcal{H}} \int_{Q_R}V_p\left(\frac{\p^{\alpha-\beta}v}{R^{\langle \beta, \mbf{a}^{-1}\rangle }}\right) \, \dd x.
$$
Choosing $\lambda$ sufficiently close to 1 so that $\frac{\theta}{\lambda^{{\max_{\beta \in \mathcal{A}}} |\beta| \max(2,p)}} <1$ we are done.  
\end{proof}
\end{section}

\begin{section}{A-harmonic approximation and partial regularity}
Let $1<p<\infty$ and $\Omega \subset \Rn$ an open set. We assume that $F \in \CC^{2}(\M) $ satisfies

\begin{enumerate}[(H1)]
    \item $|F(z)| \leq L(1+|z|^p)$
    \item $|F'(z)| \leq L(1+|z|^{p-1})$
    \item For every $M>0$ there exists $l_M >0$ such that $F-l_MV_p$ is $\mbf{a}$-quasiconvex at $z_0$ for all $|z_0| \leq M$. 
\end{enumerate}
We say $u \in \WW^{\mbf{a},p}_{\text{loc}}(\Omega ,\RN )$ is a local minimiser of
$\mathcal{F}[v;\Omega] = \int_{\Omega}F(\n v) \, \dd x$ if $\mathcal{F}[u;\Omega'] \leq \mathcal{F}[u+\varphi; \Omega']$ whenever
$\varphi \in \WW_0^{\mbf{a},p}(\Omega',\RN)$ and $\Omega' \Subset \Omega$. 

\begin{prop}\label{prp caccioppoli}
Let $1<p<\infty$ and $u \in \WW^{\mbf{a},p}_{\text{loc}}(\Omega ,\RN )$ be a local minimiser of $\mathcal{F}$. Then 
\begin{equation}\label{eqnCaccioppoli}\strokedint_{Q_R} V_p(\n u-z_0) \, \dd x \lesssim_M
  \strokedint_{Q_{2R}} \sum_{0 \neq \beta \leq \alpha, \alpha \in \mathcal{H}} V_p\left( \frac{\p^{\alpha-\beta}(u-b)}{R^{\langle \beta, \mbf{a}^{-1} \rangle}}\right) \, \dd x
\end{equation}
for all $Q_{2R} \Subset \Omega$, all $|z_0| \leq M$ and for all polynomials $b$ satisfying $\n b=z_0$. 
\end{prop}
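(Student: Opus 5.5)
We follow the classical Caccioppoli argument for quasiconvex integrals (Evans, Giaquinta--Modica), adapted to the anisotropic scaling. Fix $Q_{2R}\Subset\Omega$, $|z_0|\le M$ and a polynomial $b$ with $\n b=z_0$, and set $w=u-b$. Then $\n w=\n u-z_0$, and $u$ being a local minimiser of $\mathcal F$ means $w$ is a local minimiser of $\int F_{z_0}(\n w)\,\dd x$. Indeed,
\[
F(z_0+\n w)=F_{z_0}(\n w)+F(z_0)+F'(z_0)\cdot\n w ,
\]
and the affine term contributes nothing for compactly supported perturbations, since $\int \n\varphi\,\dd x=0$ for $\varphi\in\WW_0^{\mbf a,p}$.

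For $R\le t<s\le 2R$ (we use the interval $[R,2R]$; this is the same as working on $[R/2,R]$ after rescaling) we choose an anisotropic cutoff $\eta\in\CC_c^\infty(Q_s)$ with $\eta=1$ on $Q_t$, $0\le\eta\le1$, and
\[
|\p^\beta\eta|\lesssim \prod_i (s^{1/a_i}-t^{1/a_i})^{-\beta_i}.
\]
Such an $\eta$ exists as a product of one-dimensional cutoffs in each coordinate. We put $\varphi=\eta w$ and $\psi=(1-\eta)w$, so that $\varphi\in\WW_0^{\mbf a,p}(Q_s)$ and $\n w=\n\varphi+\n\psi$.

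By Lemma \ref{lem:equivalence of quasiconvxity} and minimality of $w$ (compare with $w-\varphi=\psi$ on $Q_s$) we get
\[
\lambda_M\int_{Q_t}V_p(\n w)\le \lambda_M\int_{Q_s}V_p(\n\varphi)\le\int_{Q_s}F_{z_0}(\n\varphi)
=\int_{Q_s}F_{z_0}(\n w)+\int_{Q_s}\bigl[F_{z_0}(\n w-\n\psi)-F_{z_0}(\n w)\bigr].
\]
Minimality gives $\int_{Q_s}F_{z_0}(\n w)\le\int_{Q_s}F_{z_0}(\n\psi)$. On $Q_t$ we have $\psi=0$, so the bracketed term is supported in $Q_s\setminus Q_t$. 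There $|F_{z_0}(\n\psi)|+|F_{z_0}(\n w)|+|F_{z_0}(\n\varphi)|\lesssim_M V_p(\n\psi)+V_p(\n w)$ by Lemma \ref{lem: linearisation of F}(i) and Lemma \ref{lem: new V function lemma}(iv). Hence
\[
\int_{Q_t}V_p(\n w)\lesssim_M \int_{Q_s\setminus Q_t}V_p(\n w)+\int_{Q_s\setminus Q_t}V_p(\n\psi).
\]

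By the Leibniz rule, for each $\alpha\in\mathcal H$,
\[
\p^\alpha\psi=(1-\eta)\p^\alpha w-\sum_{0\ne\beta\le\alpha}\binom{\alpha}{\beta}\p^\beta\eta\,\p^{\alpha-\beta}w .
\]
Using Lemma \ref{lem: new V function lemma}(iii),(iv), the first piece is bounded by $V_p(\n w)$ on the annulus. Each remaining piece is bounded by $V_p\bigl(\p^{\alpha-\beta}w/\prod_i(s^{1/a_i}-t^{1/a_i})^{\beta_i}\bigr)$, with constants absorbed via (iii). This yields
\[
\int_{Q_t}V_p(\n w)\le C_M\int_{Q_s\setminus Q_t}V_p(\n w)+C\sum_{0\neq\beta\le\alpha,\ \alpha\in\mathcal H}\int_{Q_{2R}}V_p\Bigl(\frac{\p^{\alpha-\beta}w}{\prod_i(s^{1/a_i}-t^{1/a_i})^{\beta_i}}\Bigr).
\]
Now we apply Widman's hole-filling trick: add $C_M\int_{Q_t}V_p(\n w)$ to both sides and divide by $1+C_M$. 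This gives the hypothesis of Lemma \ref{lem: iteration for V function} for $f(t)=\int_{Q_t}V_p(\n w)$, with $\theta=C_M/(1+C_M)<1$. The function $f$ is bounded since $w\in\WW^{\mbf a,p}(Q_{2R})$.

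Lemma \ref{lem: iteration for V function}, applied with $2R$ in place of $R$, gives $f(R)\lesssim_M\sum\int_{Q_{2R}}V_p\bigl(\p^{\alpha-\beta}w/R^{\langle\beta,\mbf a^{-1}\rangle}\bigr)$, since changing $R$ to $2R$ only changes constants. Dividing by $|Q_R|\approx|Q_{2R}|$ gives \eqref{eqnCaccioppoli}.

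The main obstacle is the exact form of Lemma \ref{lem:equivalence of quasiconvxity} together with the absence of any smallness: the $F_{z_0}$ and $V_p$ terms are only comparable up to $M$-dependent constants. This is why hole filling is needed instead of direct absorption, and why the constants depend on $M$. One also has to check that $\varphi$ is admissible, which holds because $\eta w\in\WW_0^{\mbf a,p}(Q_s)$: all the intermediate derivatives $\p^{\alpha-\beta}w$ lie in $\LL^p$ since $w\in\WW^{\mbf a,p}$.
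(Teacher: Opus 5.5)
Your proposal is correct and follows essentially the same route as the paper. Both arguments test with $\varphi=\eta(u-b)$ for an anisotropic product cut-off, use the coercivity of Lemma \ref{lem:equivalence of quasiconvxity} together with minimality against $u-\varphi$, bound the annulus terms via Lemma \ref{lem: linearisation of F} and the Leibniz rule, and finish with hole filling plus Lemma \ref{lem: iteration for V function} on $[R,2R]$. You also spell out two steps the paper leaves implicit: the cancellation of the affine part of $F$ (so that $u-b$ is a local minimiser of $\int F_{z_0}(\n\cdot)\,\dd x$), and the bound $\int_{Q_t}V_p(\n(u-b))\,\dd x\le\int_{Q_s}V_p(\n\varphi)\,\dd x$ that feeds the hole-filling step.
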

\begin{proof}
Fix $R \leq t <s \leq 2R$. By considering products of one dimensional cut-off functions, we can construct $\eta \in C_c^\infty(Q_s;[0,1])$ such that
$\eta=1$ on $Q_t$ and $|\p^\beta \eta| \lesssim \prod_{i=1}^n(s^{1/a_i}-t^{1/a_i})^{-\beta_i}$ for all $\beta$. We now let $\varphi = \eta (u-b) \in W_0^{\mbf{a},p}(Q_s)$
and $v=u-\varphi$. Observe that
$$
\n v-z_0=(1-\eta)(\n u-z_0)-\left(\sum_{0 \neq \beta \leq \alpha} {\alpha \choose \beta }\p^\beta \eta \p^{\alpha-\beta}(u-b) \right)_{\alpha \in \mathcal{H}}.
$$
Now from (H3) (see Lemma \ref{lem:equivalence of quasiconvxity}) we have that
\begin{align*}\int_{Q_s} V_p(\n \varphi) \dd x &\lesssim_M \int_{Q_s}F_{z_0}(\n \varphi) \, \dd x\\
&= \int_{Q_s}F_{z_0}(\n u-z_0) \dd x + \int_{Q_s \setminus Q_t} F_{z_0}(\n \varphi)-F_{z_0}(\n u-z_0) \, \dd x =: (I)+(II)
\end{align*}
From the minimisation property and Lemma \ref{lem: linearisation of F} we have
\begin{align*}
(I) & \leq  \int_{Q_s}F_{z_0}(\n v-z_0) \, \dd x \\
& \lesssim \int_{Q_s \setminus Q_t}V_p(\n v-z_0) \, \dd x\\
&\lesssim \int_{Q_s\setminus Q_t}V_p(\n u-z_0) \, \dd x +
\int_{Q_{2R}} V_p \left( \left( \sum_{0 \neq \beta \leq \alpha} \frac{\p^{\alpha-\beta}(u-b)}{\prod_{i=1}^n (s^{1/a_i}-t^{1/a_i})^{\beta_i}}\right)_{\alpha \in \mathcal{H}}\right) \, \dd x.
\end{align*}
We simply estimate $(II)$ as
\begin{align*}
(II) \leq \int_{Q_s\setminus Q_t}V_p(\n \varphi) + V_p(\n u-z_0) \dd x \lesssim \int_{Q_s \setminus Q_t} V_p(\n u-z_0) + V_p(\n v-z_0) \, \dd x.
\end{align*}
Applying the hole filling method, we arrive at
$$
\int_{Q_t}V_p(\n u-z_0) \, \dd x \leq
\theta \int_{Q_s}V_p(\n u-z_0) \dd x + C \sum_{0 \neq \beta \leq \alpha, \alpha \in \mathcal{H}}
\int_{Q_{2R}} \! V_{p}\left(  \frac{\p^{\alpha-\beta}(u-b)}{\prod_{i=1}^n (s^{1/a_i}-t^{1/a_i})^{\beta_i}}\right) \, \dd x
$$
for some $\theta \in [0,1)$. The claim now follows from Lemma \ref{lem: iteration for V function}. 
\end{proof}
\begin{lemma}\label{lem: quasiellipticity of F}
Let $1<p<\infty$. For every $|z_0| \leq M$, $F''(z_0)$ is strictly $\mbf{a}$-quasielliptic (with constant depending on $M$). 
\end{lemma}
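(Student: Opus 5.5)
The plan is to linearise the strong quasiconvexity in Lemma \ref{lem:equivalence of quasiconvxity} at $z_0$, exactly as in the first part of the proof of Proposition \ref{propLegendreHadamard}. Fix $|z_0|\le M$ and let $\lambda_M>0$ be the constant from Lemma \ref{lem:equivalence of quasiconvxity}. First take $\phi \in \CC^\infty_c(Q,\RN)$ and $\varepsilon>0$, and apply that lemma to $\varepsilon\phi$ to obtain
$$
\lambda_M \int_Q V_p(\varepsilon \n\phi)\,\dd x \le \int_Q F_{z_0}(\varepsilon\n\phi)\,\dd x .
$$
Since $F_{z_0}(0)=0$ and $F_{z_0}'(0)=0$, Taylor's formula gives
$$
F_{z_0}(\varepsilon w)=\varepsilon^2\int_0^1(1-t)F''(z_0+t\varepsilon w)[w,w]\,\dd t .
$$
Because $\n\phi$ is bounded and $F$ is $\CC^2$, after dividing by $\varepsilon^2$ the right-hand side converges uniformly to $\tfrac12 F''(z_0)[\n\phi,\n\phi]$ as $\varepsilon\searrow 0$.

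Next treat the left-hand side in the same way. We have $V_p(\varepsilon w)/\varepsilon^2\to \tfrac p2|w|^2$, since $V_p(z)=\tfrac p2|z|^2+O(|z|^4)$ near $0$, and the convergence is uniform on bounded sets. Passing to the limit yields
$$
\lambda_M\, p\int_Q|\n\phi|^2\,\dd x\le \int_Q F''(z_0)\n\phi\cdot\n\phi\,\dd x .
$$
The constant of equivalence in Lemma \ref{lem: new V function lemma}(i) would serve equally well if one prefers to avoid the exact expansion. This is precisely \eqref{quasiellipiticity eqn} with $T=F''(z_0)$ and $\nu=p\lambda_M$, which depends only on $M$ through $\lambda_M$ and not on the particular $z_0$.

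Finally, extend the inequality from $\CC^\infty_c(Q)$ to all of $\WW^{\mbf a,2}_0(Q)$. By definition, $\WW^{\mbf a,2}_0(Q)$ is the closure of $\CC^\infty_c$ in the $\WW^{\mbf a,2}$ norm, and both sides of the inequality are continuous quadratic forms in $\n\phi\in \LL^2$, so the inequality passes to the limit. Symmetry of $T$ is automatic because $T$ is a Hessian. The only point needing care is the uniformity of the limits when dividing by $\varepsilon^2$, and this is harmless for smooth compactly supported $\phi$ because $\n\phi$ is bounded. If Lemma \ref{lem:equivalence of quasiconvxity} is stated on $Q_R$, one simply takes $R=1$, $x_0=0$.
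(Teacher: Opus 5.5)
Your proposal is correct and is essentially the paper's argument. The paper sets $\Theta(t)=\int_Q F(z_0+t\n\varphi)-F(z_0)-\lambda_M V_p(t\n\varphi)\,\dd x\geq 0$, notes that it is minimised at $t=0$ so $\Theta''(0)\geq 0$, and computes $\Theta''(0)=\int_Q F''(z_0)\n\varphi\cdot\n\varphi-p\lambda_M|\n\varphi|^2\,\dd x$ before concluding by density; this is exactly your divide-by-$\varepsilon^2$ limit, with the same constant $\nu=p\lambda_M$.
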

\begin{proof}
Given $\varphi \in \CC_c^\infty(Q)$, consider the mapping $\Theta(t)= \int_{Q} F(z_0+t \n \varphi)-F(z_0)-\lambda_M V_p(t\n \varphi) \dd x  \geq 0.$
Then $\Theta$ has a minimum at $t=0$ so $\Theta''(0) \geq 0$.
A straightforward computation yields $\Theta''(0) = \int_{Q} F''(z_0) \n \varphi \cdot \n \varphi - p \lambda_M |\n \varphi|^2 \dd x$, from which the claim now follows by density. 
\end{proof}
We are now ready to carry out the harmonic approximation procedure.
For technical reasons we will restrict to the case $p\geq 2$; the subquadratic case $1<p<2$ will be addressed in forthcoming work.

\begin{prop}\label{prop A harmonic approx}
Let $2 \leq p<\infty$ and $u \in \WW^{\mbf{a},p}_{\text{loc}}(\Omega ,\RN )$ be a solution of the Euler--Lagrange equation $- \n^*F'(\n u)=0$.
Fix $M>0$, and let $a$ be any polynomial such that $\n a =z_0$ where $|z_0| \leq M$. Set $\tilde u=u-a$.
Let $\tilde h \in \tilde u+W_0^{\mbf{a},p}(Q_R;\R^N)$ be the unique solution of the linear equation $-\n^* F''(z_0) \n \tilde h=0$ in $Q_R$. Then
\begin{equation}
  \sum_{\langle \beta, \mbf{a}^{-1}\rangle<1} \strokedint_{Q_R} V_p\left(\frac{\p^\beta(\tilde u-\tilde h)}{R^{1-\langle \beta, \mbf{a}^{-1}\rangle}} \right) \, \dd x \leq
  C_M H\left(\strokedint_{Q_R} V_p(\n \tilde u) \dd x \right)
\end{equation}
where $H \colon [0,\infty) \to [0,\infty)$ is a continuous function depending only on $p, F,M$ such that
$$
\lim_{t \searrow 0} \frac{H(t)}{t}=0.
$$
\end{prop}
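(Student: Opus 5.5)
We follow the direct approach of \cite{gk19,gk24}: rather than a compactness argument, we test the difference of the two equations with $w := \tilde u - \tilde h \in \WW_0^{\mbf{a},p}(Q_R)$ and use the $\LL^p$ theory of Section 3 together with a duality argument. By translation and the scaling $x \mapsto x_0 + R\odot x$ we reduce to $Q = Q_1(0)$ and $R=1$; the scaling is compatible with the weights $R^{1-\langle\beta,\mbf{a}^{-1}\rangle}$ and with the averaged integrals. Write $T = F''(z_0)$, which by Lemma \ref{lem: quasiellipticity of F} is strictly $\mbf{a}$-quasielliptic with constant $\nu_M$. Since $\n a = z_0$ is constant and $\int F'(z_0)\cdot \n\phi \, \dd x = 0$ for $\phi \in \WW_0^{\mbf{a},p}$, the Euler--Lagrange system for $u$ becomes
\[
\int_Q F'_{z_0}(\n \tilde u)\cdot \n\phi \, \dd x = 0 .
\]
Combining this with $\int_Q T\n\tilde h\cdot\n\phi \, \dd x = 0$ gives
\[
\int_Q T\n w\cdot \n\phi \, \dd x = \int_Q \bigl(T\n\tilde u - F'_{z_0}(\n\tilde u)\bigr)\cdot\n\phi \, \dd x =: \langle f,\phi\rangle .
\]
Thus $w$ solves $\n^* T\n w = f$ in $Q$ with $w\in\WW_0^{\mbf{a},p}$, where $f = \n^* G$ and $G := T\n\tilde u - F'_{z_0}(\n \tilde u)$.

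By Lemma \ref{lem: linearisation of F}(iii), $|G| \lesssim_M \omega_M(|\n\tilde u|)(|\n\tilde u| + |\n\tilde u|^{p-1})$. We split $G = G\mbf{1}_{\{|\n\tilde u|\le 1\}} + G\mbf{1}_{\{|\n\tilde u|>1\}} =: G_1 + G_2$ and solve separately, using the linear theory. For $G_1$ we use Theorem \ref{propLpBoundsLinear} with exponent $2$ (Lax--Milgram). This gives the solution $w_1$ with
\[
\|\n w_1\|_{\LL^2}^2 \lesssim \int_{\{|\n\tilde u|\le1\}} \omega_M(|\n\tilde u|)^2|\n\tilde u|^2 .
\]
For $G_2$, where $|G_2|\lesssim |\n\tilde u|^{p-1}$, we use $\LL^{p'}$-type information in dual form. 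By Theorem \ref{propLpBoundsLinear} and duality (the adjoint problem with $T$ symmetric), the solution map is bounded $\WW^{-\mbf{a},q}\to\WW_0^{\mbf{a},q}$ also for $q\in(1,2]$. So $w_2$ satisfies
\[
\|\n w_2\|_{\LL^{p'}} \lesssim \|G_2\|_{\LL^{p'}} \lesssim \Bigl(\int_{\{|\n\tilde u|>1\}} |\n\tilde u|^p\Bigr)^{1/p'} .
\]
Since $w = w_1 + w_2$, Proposition \ref{prp:poincare with perturbation} (with $z_0=0$, $P=0$ allowed since $w\in\WW_0$) gives control of all $\p^\beta w$ with $\langle\beta,\mbf{a}^{-1}\rangle\le1$. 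We then use Proposition \ref{prp:sobolev embedding} for the lower-order derivatives $\langle\beta,\mbf{a}^{-1}\rangle<1$ to gain integrability: $\p^\beta w_2$ lies in $\LL^{q}$ for some $q>p'$, and $\p^\beta w_1$ lies in some $\LL^{2+\epsilon}$.

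To convert to the $V_p$ quantity we write $t = \int_Q V_p(\n\tilde u)$, which is $\approx \int_Q (|\n\tilde u|^2 + |\n\tilde u|^p)$ by Lemma \ref{lem: new V function lemma}(i) since $p\ge2$. For the $w_1$ part, Jensen with the concave modulus and Hölder give
\[
\|\n w_1\|_2^2 \lesssim \omega_M\bigl(t^{1/2}\bigr)\, t
\]
or a similar expression, which is $o(t)$. For the $w_2$ part, $\|\n w_2\|_{p'}\lesssim t^{1/p'}$, and the integral $\int V_p(\p^\beta w_2)$ involves both $|\cdot|^2$ and $|\cdot|^p$ terms. Here the improved integrability from the Sobolev embedding and interpolation is essential, because the crude bound only puts $\p^\beta w_2$ in $\LL^{p'}$ while we need $\LL^2$ and $\LL^p$. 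We aim for $\|\p^\beta w_2\|_{\LL^2}^2\lesssim t^{2/p'}$; since $2/p'>1$ when $p>2$, this is superlinear. The case $p=2$ has no $G_2$ issue, because then $|G_2|\lesssim\omega|\n\tilde u|$ and we can treat it like $w_1$.

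The main obstacle is the $\LL^p$ part. Bounding $\int|\p^\beta w|^p$ superlinearly in $t$ requires estimating $\n w$ in $\LL^p$ using the full strength of Theorem \ref{propLpBoundsLinear}. We would first try a Lipschitz truncation or higher-integrability argument for $w$. Failing that, we would use the identity $\tilde u - \tilde h$ with the bound
\[
\|\n\tilde h\|_p \lesssim \|\n\tilde u\|_p
\]
from Theorem \ref{propLpBoundsLinear}, which gives $\int|\p^\beta w|^p\lesssim t$, and interpolate it against the superlinear lower-exponent bound. Through Proposition \ref{prp:sobolev embedding} this interpolation yields a power $t^{1+\gamma}$ for the strictly lower-order derivatives. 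This gain of integrability for $\langle\beta,\mbf{a}^{-1}\rangle<1$ is exactly why the statement excludes the top-order derivatives. Finally we collect all the terms into a continuous $H$ with $H(t)/t\to0$.
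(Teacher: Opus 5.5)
Your splitting $w=w_1+w_2$, with each piece estimated only from its own data, does not give a superlinear bound, even though the ingredients you list (direct approach, $\LL^q$ theory, duality) are the right ones.

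\textbf{The $w_1$ estimate.} The step ``Jensen with the concave modulus and H\"older give $\|\n w_1\|_{\LL^2}^2\lesssim\omega_M(t^{1/2})\,t$'' is false. Bounding $\|\n w_1\|_{\LL^2}$ by $\|G_1\|_{\LL^2}$ uses the whole H\"older budget, and no such bound makes $\int_{\{|z|\le1\}}\omega_M(|z|)^2|z|^2\,\dd x$ into $o(t)$. Take $z$ equal to a fixed matrix of norm $1/2$ on a set of measure $\delta$ and $0$ elsewhere. The left side is then $\approx\omega_M(1/2)^2\,t$, whereas $\omega_M(t^{1/2})\,t=o(t)$.

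Gaining integrability \emph{above} $2$ for $\p^\beta w_1$ goes in the wrong direction. You must estimate $\n w_1$ in a \emph{lower} exponent and return to $\LL^2$ by Sobolev, which is possible exactly because $\langle\beta,\mbf a^{-1}\rangle<1$. Concretely, $\|\p^\beta w_1\|_{\LL^2}\lesssim\|\n w_1\|_{\LL^s}\lesssim\|\omega_M(|\n\tilde u|)\|_{\LL^r}\|\n\tilde u\|_{\LL^2}$ with $s<2$ and $\frac1s=\frac1r+\frac12$. Then $\|\omega_M(|\n\tilde u|)\|_{\LL^r}^r\le\int_Q\omega_M(|\n\tilde u|)\,\dd x\lesssim\rho(t)$ by Jensen, with $\rho=\omega_M\circ V_p^{-1}$ concave. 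The paper does this in dual form. It tests with the solution $\varphi^\beta$ of $\n^*F''(z_0)\n\varphi^\beta=(\p^\beta)^*\p^\beta(\tilde u-\tilde h)$, whose gradient lies in $\LL^{2_\beta}$ and $\LL^{p_\beta}$ with $2_\beta>2$ and $p_\beta>p$. That extra integrability is what leaves room for the small factor $\|\omega_M(|\n\tilde u|)\|_{\LL^r}$.

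\textbf{The $w_2$ estimate.} The $G_2$ route cannot close. Data $G_2\in\LL^{p'}$ only place $\p^\beta w_2$ in $\LL^s$ with $\frac1s=\frac1{p'}-\frac{1-\langle\beta,\mbf a^{-1}\rangle}{|\mbf a^{-1}|}$. In general this does not reach $2$, let alone $p$: for $\mbf a=(1,2)$ and $\beta=(0,1)$ it misses $2$ once $p>6$. So neither $\|\p^\beta w_2\|_{\LL^2}^2\lesssim t^{2/p'}$ nor any $\LL^p$ bound for $w_2$ is available, and your interpolation fallback rests on that unproved bound.

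The paper never splits the solution. It handles the two parts of $V_p(\p^\beta w)$ as follows.
\begin{enumerate}[(a)]
\item For the $\LL^p$ part it does not use the Euler--Lagrange system at all. From $\n^*F''(z_0)\n(\tilde u-\tilde h)=\n^*F''(z_0)\n\tilde u$, the $\LL^q$ estimate with $2<q<p$ together with Sobolev gives $\int_Q|\p^\beta(\tilde u-\tilde h)|^p\,\dd x\lesssim\|\n\tilde u\|_{\LL^q}^p\lesssim t^{p/q}$. This is superlinear simply because $|z|^q\le|z|^2+|z|^p$.
\item For the $\LL^2$ part, the $|\n\tilde u|^{p-1}$ piece of $G$ is paired with $\n\varphi^\beta$ in $\LL^{p_\beta}$, where $\|\n\varphi^\beta\|_{\LL^{p_\beta}}\lesssim\|\p^\beta(\tilde u-\tilde h)\|_{\LL^p}$. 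This gives a bound $\|\omega_M(|\n\tilde u|)\|_{\LL^{r_p}}\,t^{(p-1)/p}\,\|\p^\beta(\tilde u-\tilde h)\|_{\LL^p}$. Young's inequality then absorbs the last factor into $\int_Q V_p(\p^\beta(\tilde u-\tilde h))\,\dd x$ on the left.
\end{enumerate}

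The missing idea is this coupling: bound the $\LL^2$ norm of $\p^\beta w$ by its $\LL^p$ norm and absorb. The growth-$p$ part of $G$ cannot be controlled in $\LL^2$ in isolation.
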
 

\begin{proof}
By standard scaling arguments, it suffices to prove the case $R=1$. Note existence of $\tilde h$ follows from
Theorem \ref{propLpBoundsLinear} and Lemma \ref{lem: quasiellipticity of F}. Let us now fix $\langle \beta, \mbf{a}^{-1} \rangle <1$. 
Let $p_{\beta}>p$ be chosen so that $\langle \beta, \mbf{a}^{-1}\rangle+(1/p -1/p_\beta)|\mbf{a}^{-1}| \leq 1$, so that $p_\beta$ is a higher integrability
exponent from Proposition \ref{prp:sobolev embedding}. Observe that 
\begin{align*}
    \Vert (\p^\beta)^* \p^\beta(\tilde u-\tilde h)\Vert_{\WW^{-\mbf{a},p_\beta}(Q)} &= \sup_{\Vert \xi\Vert_{W_0^{\mbf{a},p_\beta'}} \leq 1} \int_{Q}\p^\beta(\tilde u-\tilde h)\cdot \p^\beta \xi \dd x\\
    & \leq \Vert\p^\beta(\tilde u-\tilde h) \Vert_{\LL^p} \sup_{\Vert \xi\Vert_{W_0^{\mbf{a},p_\beta'}} \leq 1} \Vert \p^\beta \xi \Vert_{\LL^{p'}}\\
    & \leq C \Vert \p^\beta (\tilde u-\tilde h) \Vert_{\LL^{p}(Q)}.
\end{align*}
Consequently there exists a unique $\varphi^\beta \in \WW_0^{\mbf{a},p_\beta}(Q ,\RN )$ solving
\begin{equation}\label{eqphibetapde}
\n^*F''(z_0) \n \varphi^\beta = (\p^\beta)^* \p^\beta (\tilde u-\tilde h)
\end{equation}
and satisfying the estimate $\Vert \n \varphi^\beta \Vert_{\LL^{p_\beta}(Q)} \leq C \Vert \p^\beta(\tilde u-\tilde h) \Vert_{\LL^{p}(Q)}$. 
Since $p \geq 2$, we can write
$$
\int_{Q}V_p(\p^\beta (\tilde u-\tilde h)) \dd x \approx \int_{Q} |\p^\beta (\tilde u-\tilde h)|^2+|\p^\beta (\tilde u-\tilde h)|^p \dd x =: (I)+(II).
$$
We begin by estimating $(II)$ (for which we may implicitly assume $p>2$). Since $\tilde u-\tilde h \in \WW_0^{\mbf{a},p}(Q)$ satisfies
$-\n^* F''(z_0) \n (\tilde u-\tilde h)=-\n^* F''(z_0) \n \tilde u$, we have that $\Vert \n (\tilde u-\tilde h)\Vert_{L^q} \leq C \Vert \n \tilde u\Vert_{L^q}$ for any $2 \leq q \leq p$.
In particular, we may choose $2 < q <p$ so that $\langle \beta, \mbf{a}^{-1}\rangle+(1/q - 1/p)|\mbf{a}^{-1}| \leq 1$,
so that by Sobolev embedding it follows $\Vert \p^\beta (\tilde u-\tilde h)\Vert_{\LL^p} \leq C\Vert \n( \tilde u -\tilde h)\Vert_{\LL^q}$. Combining these estimates we see that 
$$
(II) \leq C\left(\int_{Q}|\n \tilde u|^q \dd x\right)^{p/q} \leq C  \left(\int_{Q}|\n \tilde u|^2 +|\n \tilde u|^p \dd x\right)^{p/q} =C \left(\int_{Q}V_p(\n \tilde u) \dd x \right)^{p/q}.
$$
To estimate $(I)$, we use $\tilde u-\tilde h$ as a test function in (\ref{eqphibetapde}), and $\varphi^\beta$ as a test function for  $-\n^* F''(z_0) \n \tilde h=0$
and the Euler--Lagrange equation, to deduce

\begin{align*}
(I) & = \int_{Q}|\p^\beta (\tilde u-\tilde h)|^2 \dd x \\
& =\int_{Q} F''(z_0) \n \varphi^\beta  \cdot \n (\tilde u -\tilde h) \dd x\\
& = \int_{Q}F''(z_0) \n \tilde u \cdot \n \varphi^\beta \dd x\\
&=\int_{Q}(F''(z_0)\n \tilde u-F'_{z_0}(\n \tilde u)) \cdot \n \varphi ^\beta \dd x\\
& \leq \int_{Q}\omega_M(|\n \tilde u|)(|\n \tilde u|+|\n \tilde u|^{p-1})|\n \varphi^\beta | \dd x:= (III)+(IV).
\end{align*}
Since $p_\beta >p$, there exists $r_p>1$ such that $\frac{1}{p'}+\frac{1}{p_\beta}+\frac{1}{r_p}=1$. Then by Hölder's and Young's inequality
\begin{align*}
(IV) &\leq \Vert \omega_M(|\n \tilde u|)\Vert_{L^{r_p}} \Vert |\n \tilde u|^{p-1}\Vert_{L^{p'}}\Vert \n \varphi^\beta\Vert_{L^{p_\beta}}\\
& \leq C\Vert \omega_M(|\n \tilde u|)\Vert_{L^{r_p}} \Vert \n \tilde u\Vert_{L^p}^{p-1} \Vert \p^\beta(\tilde u-\tilde h)\Vert_{L^p}\\
& \leq C \Vert \omega_M(|\n \tilde u|)\Vert_{L^{r_p}} \left( \int_{Q} V_p(\n \tilde u) \dd x \right)^{\frac{p-1}{p}} \left(\int_{Q}V_p(\p^\beta (\tilde u-\tilde h)) \dd x \right)^{1/p}\\
& \leq \ep \int_{Q}V_p(\p^\beta (\tilde u-\tilde h) ) \dd x + C \Vert \omega_M(|\n \tilde u|)\Vert_{L^{r_p}}^{\frac{p}{p-1}}\int_{Q} V_p(\n \tilde u) \dd x.
\end{align*}
By the same reasoning where we similarly define $2_\beta$ as a higher integrability exponent, but this time using $\frac{1}{2'}+\frac{1}{2_\beta}+\frac{1}{r_2}=1$ we can obtain 
$$(III) \leq \ep \int_{Q}V_p(\p^\beta (\tilde u-\tilde h) ) \dd x + C \Vert \omega_M(|\n \tilde u|)\Vert_{L^{r_2}}^2\int_{Q} V_p(\n \tilde u) \dd x .$$
Absorbing the terms multiplied by $\ep$ to the left-hand side, and using that $\omega_M \leq 1$ we altogether obtain for some $r>1$: 
$$\int_{Q}V_p(\p^\beta (\tilde u-\tilde h)) \dd x\leq  C\left(\int_{Q}V_p(\n \tilde u) \dd x \right)^{p/q}+ C\Vert \omega_M(|\n \tilde u|)\Vert_{L^r}\int_{Q} V_p(\n \tilde u) \dd x.$$
It remains to observe, writing $\rho(t)=\omega_M \circ V_p^{-1}(t)$ which is concave with $ \lim_{t\searrow 0}\rho(t) =0$, that

$$\Vert \omega_M(|\n \tilde u|)\Vert_{L^r}^r \leq  C\strokedint_{Q}\omega_M(|\n \tilde u|) \dd x \leq C \rho\left( \strokedint_{Q} V_p(\n \tilde u) \dd x\right).$$
Thus by taking $H(t)=t^{p/q} + t\rho(t)^{1/r}$ if $p>2$ and $H(t)=t\rho(t)^{1/r}$ if $p=2$, we are finished.
\end{proof}
We are now ready to derive the estimates for the excess. Given $x_0 \in \Omega$, we define for concentric anisotropic boxes 
\begin{equation}
  E_R(\n u ) = \strokedint_{Q_R}V_p(\n u - (\n u)_R) \dd x \approx \strokedint_{Q_R}|\n u-(\n u)_R|^2+|\n u-(\n u)_R|^p \dd x.
\end{equation}

\begin{theorem}\label{thm excess decay estimates}
For every $M>0, \theta \in (0,1)$, there exist $\tau \in (0,1)$ and $\ep \in (0,1)$ such that if $Q_R \Subset \Omega$ and
$u \in \WW^{\mbf{a},p}_{\text{loc}}(\Omega ,\RN)$ is a local minimiser of $\mathcal{F}$ such that 
\begin{equation}\label{eqnsmallnessassumptions}E_R(\n u) <\ep \ \ \ \text{ and } \ \ \ |(\n u)_R|<M,
\end{equation}
then
\begin{equation}E_{\tau R}(\n u)\leq \tau^{(1+\theta)/\max a_i}E_R(\n u).
\end{equation}
\end{theorem}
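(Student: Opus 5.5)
We follow the usual excess-decay scheme: compare $u$ with the solution of the linearized system, use the Campanato decay of Lemma \ref{lemmaZeroRHSEstimate} for that solution, and control the error with the Caccioppoli inequality (Proposition \ref{prp caccioppoli}) and the approximation estimate (Proposition \ref{prop A harmonic approx}).

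\textbf{Setup.} Fix $M$ and $\theta$, and put $z_0=(\n u)_R$, so $|z_0|<M$. By Proposition \ref{prp:poincare with perturbation} we pick a polynomial $a$ with $\n a=z_0$, and set $\tilde u=u-a$. Since $E_R(\n u)=\strokedint_{Q_R}V_p(\n\tilde u)\,\dd x$, it suffices to control this quantity. Local minimizers satisfy the Euler--Lagrange system (Proposition \ref{existence}; (H2) justifies differentiating under the integral). So Proposition \ref{prop A harmonic approx} applies on $Q_{R/2}$, which we prefer to $Q_R$ to leave room for Caccioppoli later. It gives $\tilde h$ with $\n^*F''(z_0)\n\tilde h=0$ and
\[
\sum_{\langle\beta,\mbf a^{-1}\rangle<1}\strokedint_{Q_{R/2}}V_p\Bigl(\frac{\p^\beta(\tilde u-\tilde h)}{R^{1-\langle\beta,\mbf a^{-1}\rangle}}\Bigr)\dd x\le C_M H(C E_R).
\]

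\textbf{Decay of the linear comparison map.} Let $\rho=\tau R$ with $\tau\le 1/8$. By Lemma \ref{lem: quasiellipticity of F}, $F''(z_0)$ is strictly quasielliptic with constant depending on $M$, and by Corollary \ref{propAEllipticIsHypoelliptic} $\tilde h$ is smooth. Let $P_h$ be the anisotropic Taylor polynomial of $\tilde h$ at $x_0$ chosen so that $\n P_h=(\n\tilde h)_{2\rho}$ and its lower-order derivatives match means. Applying Lemma \ref{lemmaZeroRHSEstimate} with exponents $2$ and $p$, and iterating Poincaré, gives for all $\beta\le\alpha$, $\alpha\in\HH$,
\[
\strokedint_{Q_{2\rho}}V_p\Bigl(\frac{\p^{\alpha-\beta}(\tilde h-P_h)}{\rho^{\langle\beta,\mbf a^{-1}\rangle}}\Bigr)\le C_M\,\tau^{2/\max a_i}\strokedint_{Q_{R/2}}V_p(\n\tilde h).
\]
We also need $\strokedint_{Q_{R/2}}V_p(\n\tilde h)\lesssim E_R$. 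This follows from the $\LL^2$ and $\LL^p$ bounds of Theorem \ref{propLpBoundsLinear} applied to $\tilde u-\tilde h$.

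\textbf{Combining.} Apply Caccioppoli (Proposition \ref{prp caccioppoli}) on $Q_\rho\subset Q_{2\rho}$ with the polynomial $b=a+P_h$, whose $\n b$ is bounded by $2M$ once $\ep$ is small. Splitting $u-b=(\tilde u-\tilde h)+(\tilde h-P_h)$ and using Lemma \ref{lem: new V function lemma}(iii),(iv) gives
\[
E_\rho\lesssim \strokedint_{Q_\rho}V_p(\n u-z_0-\n P_h)\lesssim_M \tau^{2/\max a_i}E_R+\tau^{-|\mbf a^{-1}|-c}H(CE_R).
\]
The factor $\tau^{-|\mbf a^{-1}|}$ comes from passing from averages over $Q_{2\rho}$ to $Q_{R/2}$. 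The factor $\tau^{-c}$ comes from rescaling $\rho^{-\langle\beta,\mbf a^{-1}\rangle}$ against $R^{-(1-\dots)}$ via Lemma \ref{lem: new V function lemma}(iii). Here the term with $\beta=0$ is $\n(\tilde u-\tilde h)$ itself and is estimated by $C E_R$ only, so more care is needed: I would handle it by applying Caccioppoli with $\beta\neq0$ only, which is exactly how \eqref{eqnCaccioppoli} is stated. Replacing the mean by $z_0+\n P_h$ costs only a constant.

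Now choose $\tau$ so that $C_M\tau^{2/\max a_i}\le\frac12\tau^{(1+\theta)/\max a_i}$, which is possible since $\theta<1$. Then choose $\ep$ so that $H(Ct)/t\le \frac12\tau^{(1+\theta)/\max a_i+|\mbf a^{-1}|+c}/C_M$ for $t<\ep$. This uses $H(t)/t\to0$.

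The main obstacle is the top-order term $\n(\tilde u-\tilde h)$: Proposition \ref{prop A harmonic approx} controls only the lower-order derivatives. The remedy is the Caccioppoli inequality, whose right side contains only lower-order derivatives $\p^{\alpha-\beta}$ with $\beta\ne0$. I would check carefully that those are exactly the $\langle\cdot,\mbf a^{-1}\rangle<1$ quantities that Proposition \ref{prop A harmonic approx} and the Taylor decay control.
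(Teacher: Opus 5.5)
This is correct and is essentially the paper's argument, with only harmless variations such as working on $Q_{R/2}$ and getting $|\n b|\le 2M$ from the smallness of $\ep$ (chosen after $\tau$). The shared skeleton is: compare with the solution $\tilde h$ of the frozen linear system; apply Caccioppoli with $b=a+P_h$ on $Q_{\tau R}\subset Q_{2\tau R}$, whose right-hand side only contains $\beta\neq 0$ terms, so the top-order term you worried about never appears; control $\tilde u-\tilde h$ by Proposition~\ref{prop A harmonic approx} at the price $\tau^{-|\mbf a^{-1}|-p}$ and $\tilde h-P_h$ by Lemma~\ref{lemmaZeroRHSEstimate} plus Poincar\'e; then fix $\tau$ before $\ep$. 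The one step to tighten is $P_h$: take it directly from Proposition~\ref{prp:poincare with perturbation}, because normalising lower-order means and iterating one-step Poincar\'e can fail anisotropically (for $\mbf a=(2,3)$ the map $x_1x_2^2-\tfrac13 x_1$ has vanishing $\n$ and vanishing lower-order means on $Q_1(0)$, yet its $\p_1$-derivative is nonzero).
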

\begin{proof}
Let $a$ be any polynomial such that $\n a=(\n u)_R$. Define $\tilde u= u-a$ and construct $\tilde h$ as in Proposition \ref{prop A harmonic approx}.
Set $h = \tilde h+a$, so that $\tilde u -\tilde h=u-h$. For $\tau \in (0,1/4)$ specified later, take $b$ to be the Poincaré polynomial for $h$
adapted to $Q_{2\tau R}$ satisfying $\n b= (\n h)_{2\tau R}.$ \\
We claim that $|\n b| \leq C_M$. Indeed, note that $\n h = \n \tilde h+(\n u)_R$, so it suffices to show $|(\n \tilde h)_{2\tau R}| \leq C_M$.
To see this, recall that $\tilde h \in \tilde u + W_0^{\mbf{a},p}(Q_R)$ satisfies $-\n^* F''(z_0) \n \tilde h=0$ in $Q_R.$
Thus, combining decay estimates (Lemma \ref{lemmaZeroRHSEstimate}) and $L^2$ estimates (a consequence of Theorem \ref{propLpBoundsLinear}) we have
\begin{align*}
|(\n \tilde h)_{2\tau R}| &\leq \strokedint_{Q_{2\tau R}} |\n \tilde h| \dd x \\
& \leq \left( \strokedint_{Q_{2\tau R}}|\n \tilde h|^2 \dd x\right)^{1/2}\\
&\leq C_M \left(\strokedint_{Q_R}|\n \tilde h|^2 \dd x\right)^{1/2}\\
& \leq C_M \left(\strokedint_{Q_R}|\n \tilde u|^2 \dd x \right)^{1/2}\\
& \leq C_M \left( E_R(\n u) \right)^{1/2} \\
& \leq C_M.
\end{align*}
Since $|\n b| \leq C_M$ we may use it in Caccioppoli's inequality (\ref{eqnCaccioppoli}) to estimate
\begin{align*}
E_{\tau R}(\n u) &=\strokedint_{Q_{\tau R}} V_p(\n u-(\n u)_{\tau R}) \dd x \\
&\lesssim \strokedint_{Q_{\tau R}} V_p(\n u- \n b) \dd x\\
& \lesssim_M \strokedint_{Q_{2\tau R}} \sum_{0 \neq \beta \leq \alpha, \alpha \in \mathcal{H}} V_p\left( \frac{\p^{\alpha-\beta}(u-b)}{(\tau R)^{\langle \beta, \mbf{a}^{-1} \rangle}}\right) \dd x\\
& \lesssim \strokedint_{Q_{2\tau R}} \sum_{0 \neq \beta \leq \alpha, \alpha \in \mathcal{H}} V_p\left( \frac{\p^{\alpha-\beta}(\tilde u-\tilde h)}{(\tau R)^{\langle \beta, \mbf{a}^{-1} \rangle}}\right) \dd x+\strokedint_{Q_{2\tau R}} \sum_{0 \neq \beta \leq \alpha, \alpha \in \mathcal{H}} V_p\left( \frac{\p^{\alpha-\beta}(h-b)}{(\tau R)^{\langle \beta, \mbf{a}^{-1} \rangle}}\right) \dd x\\
&:=(I)+(II).
\end{align*}
To estimate $(I)$ we apply Proposition \ref{prop A harmonic approx} to estimate
\begin{align*}
(I) &\leq C \tau^{-|\mbf{a}^{-1}|-p} \strokedint_{Q_R} \sum_{\langle \beta, \mbf{a}^{-1} \rangle<1}V_p\left(\frac{\p^\beta(\tilde u-\tilde h)}{ R^{1-\langle \beta, \mbf{a}^{-1}\rangle}} \right) \dd x\\
& \leq C \tau^{-|\mbf{a}^{-1}|-p} H \left(\strokedint_{Q_R} V_p(\n \tilde u) \dd x \right) = C \tau^{-|\mbf{a}^{-1}|-p} H(E_R(\n u)).
\end{align*}
To estimate $(II)$ we use the linear decay estimates (Lemma \ref{lemmaZeroRHSEstimate}) and the Poincaré inequality to estimate
\begin{align*}
  (II) & \lesssim \strokedint_{Q_{2\tau R}} \sum_{\langle \beta, \mbf{a}^{-1} \rangle <1} \left|\frac{\p^\beta (h-b)}{(\tau R)^{1-\langle \beta, \mbf{a}^{-1} \rangle}} \right|^2
  + \left|\frac{\p^\beta (h-b)}{(\tau R)^{1-\langle \beta, \mbf{a}^{-1} \rangle}} \right|^p \dd x\\
& \leq C \strokedint_{Q_{2\tau R}} |\n h-(\n h)_{2\tau R}|^2 +|\n h-(\n h)_{2\tau R}|^p \dd x\\
& \leq C\tau^{2/\max a_i}\strokedint_{Q_R} |\n h-(\n h)_{R}|^2+|\n h-(\n h)_{R}|^p \dd x\\
& \leq C\tau^{2/\max a_i}\strokedint_{Q_R} |\n h-(\n u)_R|^2 +|\n h-(\n u)_R|^p\dd x.
\end{align*}
Recall that $-\n^* F''(z_0) \n  h=0$ and $h \in u+ W_0^{\mbf{a},p}(Q_R)$. From Theorem \ref{propLpBoundsLinear} with a simple perturbation argument
(see \cite[Chapter 10]{GG}) this implies $\int_{Q_R} |\n h-\lambda|^q \dd x \leq C \int_{Q_R}|\n u-\lambda|^q \dd x$ for all $2 \leq q \leq p$.
Thus it follows that $(II) \leq C_M \tau^{2/\max a_i}E_R(\n u)$. Combining these estimates, we see that 
$$
E_{\tau R}(\n u) \leq C_M \tau^{-|\mbf{a}^{-1}|-p} H(E_R(\n u))+C_M \tau^{2/\max a_i}E_R(\n u).
$$
Now we choose $\tau \in (0,1/4)$ small enough that $C_M \tau^{2/\max a_i} \leq \frac{1}{2}\tau^{(1+\theta)/ \max a_i}$.
Then we choose $\ep \in (0,1)$ such that $C_M \tau^{-|\mbf{a}^{-1}|-p} H(t)/t \leq \frac{1}{2}\tau^{(1+\theta)/\max a_i}$ for all $t \leq \ep$.
We conclude that
$$
E_{\tau R}(\n u) \leq \tau^{(1+\theta)/\max a_i}E_R(\n u).
$$ 

\end{proof}
\begin{proof}[Proof of Theorem \ref{ThmPartialregularity}]
Let $0<\theta<1, M>0$. By a standard iteration procedure, there exists $\ep>0$ such that if \eqref{eqnsmallnessassumptions} holds, then
there exists $C$ such that $E_{r}(\n u) \leq C \left(\frac{r}{R} \right)^{\frac{1+\theta}{\max a_i}}E_R(\n u)$ for all $r<R$.
By continuity of the maps $x \rightarrow E_R(\n u;Q_R(x))$ and $x \rightarrow (\n u)_{Q_R(x)}$, this estimate holds for all boxes centred at $y$ in
some neighbourhood of $x$. Hence we deduce that $\n u \in \mathfrak{L}_{\mbf{a}}^{2, 1+\frac{1+\theta}{|\mbf{a}^{-1}| \max a_i}} \approx \CC^{0, \frac{1+\theta}{2}}_{\delta_{\mbf{a}}}$
on a neighbourhood of $x$. Finally, if $x$ is not in the singular set defined in \eqref{eqnSing1}, \eqref{eqnSing2} then there exists some $R>0$ such
that $|(\n u)_R|<M$ and $E_R(\n u) <\ep$ so that \eqref{eqnsmallnessassumptions} holds. By the Lebesgue differentiation theorem the singular set has measure zero, so we are done.
\end{proof}
\subsection{Extension to lower order derivatives} Theorem \ref{ThmPartialregularity} asserts a partial regularity result of the $\mbf{a}$-gradient,
but does not assert anything about the lower order derivatives. We briefly sketch how to obtain regularity of the lower order terms. Our method relies on
integral representations which can be found in \cite{Besovintrep}. We could alternatively deduce such regularity from \cite[Theorem 1]{boman}.
Given a rectangular domain $R$, one can construct a kernel $\Phi \in C_c^\infty(R)$ with
$\int_{\Rn} \! \Phi \, \dd x =1$, such that if we denote
$$
\Phi_\ep(x) = \frac{1}{\ep^{|\mbf{a}^{-1}|}}  \Phi\left( \frac{-x }{\ep^{\mbf{a}^{-1}}}\right) = \frac{1}{\ep^{|\mbf{a}^{-1}|}}\Phi\left(\frac{-x_1}{\ep^{a_1^{-1}}},\dots,\frac{-x_n}{\ep^{a_n^{-1}}}\right)
$$
and define the mollification
$$
f_\ep(x)=f*\Phi_{\ep}
$$
then we have the representation 
\begin{equation}\label{eqn integral representation formula}
  f_{\tau_1}(x)=f_{\tau_2}(x)-\sum_{i=1}^n\int_{\tau_1}^{\tau_2}\int_{\Rn} \! \p_i^{a_i}f(y)G_{i}^{\ep}(y-x) \, \dd y \, \dd \ep
\end{equation} 
for $\tau_1<\tau_2$, where $G_i \in \CC_c^{\infty}(R)$ and $G_i^\ep (x)=\frac{1}{\ep^{|\mbf{a}^{-1}|}}G_i\left(\frac{x}{\ep^{\mbf{a}^{-1}}}\right)$.
This is valid for any $f \in \WW^{\mbf{a},{p}}_\text{loc}(\R^n)$. Given $\langle \beta, \mbf{a}^{-1} \rangle <1$, we write
$$
H_{i,\tau_1,\tau_2}^\beta(x)=\int_{\tau_1}^{\tau_2}\ep^{-\langle \beta, \mbf{a}^{-1} \rangle-|\mbf{a}^{-1}|}|\p^\beta G_i \left(\frac{-x}{\ep^{\mbf{a}^{-1}}}\right)| \dd \ep.
$$
Then a simple calculation gives us the pointwise estimate
$$
|\p^\beta f_{\tau_1}(x) - \p^\beta f_{\tau_2}(x)| \leq \sum_{i=1}^n |\p_i^{a_i}f|*H_{i,\tau_1,\tau_2}^\beta(x).
$$
Now one can check that
$$
\Vert H_{i,\tau_1,\tau_2}^\beta \Vert_{\LL^1(\R^n)} \leq C \left(\tau_2^{1-\langle \beta, \mbf{a}^{-1}\rangle }-\tau_1^{1-\langle \beta, \mbf{a}^{-1} \rangle } \right)
$$
and hence we have 
$$
\Vert \p^\beta f_{\tau_1}-\p^\beta f_{\tau_2}\Vert_{\LL^\infty(\R^n)}\leq C  \Vert \n f\Vert_{\LL^\infty(\R^n)}\left(\tau_2^{1-\langle \beta, \mbf{a}^{-1}\rangle }-\tau_1^{1-\langle \beta, \mbf{a}^{-1} \rangle } \right).
$$
Assuming that $\n f \in \LL^\infty$, this estimate implies $(\p^\beta f_\ep)_{\ep>0}$ is Cauchy as $\ep \searrow 0$, so by uniform convergence, we see that
$\p^\beta f$ is a continuous function satisfying the estimate
\begin{equation}\label{eqn mollficaiton estimate boundedness}
\Vert \p^\beta f-\p^\beta f_\ep\Vert_{\LL^\infty(\Rn)} \leq C \ep^{1-\langle \beta, \mbf{a}^{-1} \rangle} \Vert \n f\Vert_{\LL^\infty(\Rn)}.
\end{equation}
We record another useful estimate in the case $\beta \neq 0$. Since $G_i$ is compactly supported, after differentiating the integral in
\eqref{eqn integral representation formula} and subtracting a constant, and this lets us deduce 
\begin{equation}\label{eqn mollification estimate holder}
|\p^\beta f_{\tau_1}(x) - \p^\beta f_{\tau_2}(x)| \leq \sum_{i=1}^n \int_{\R^n}|\p_i^{a_i}f(y)-\p_i^{a_i}f(x)|H_{i,\tau_1,\tau_2}^\beta(x-y) \dd y.\
\end{equation}
The key idea we utilise, as observed in \cite{LamyMironescu15}, is that one can characterise the smoothness of a function by the rate of convergence of mollification.
It is convenient to work with the spaces
$$
\CC_j^{k,\theta}(\R^n)=\left\{ u \in \LL^\infty(\Rn): \, \p^k_j u\in \LL^\infty(\Rn) \text{ and } \sup_{0<|h|\leq 1}\frac{\Vert \Delta_j^h \p_j^ku \Vert_{\LL^\infty}}{|h|^\theta} <\infty \right\}
$$
for $0<\theta \leq 1$ and $k \in \N_0$. Here, $\Delta_j^hf(x)=f(x+he_j)-f(x)$ denotes the usual differences. In other words, $\CC_j^{k,\theta}$ is the set of functions whose
$k^\text{th}$ derivative in direction $x_j$ is Hölder (resp. Lipschitz) continuous in direction $x_j$ with exponent $\theta$. Note that if $0<\theta<1$, then these spaces
are the higher order anisotropic Nikolskii spaces $N_j^{k+\theta,\infty}$, but it is more convenient to work with Lipschitz functions in the end point case $\theta=1$.

\begin{lemma}\label{lemma rate of convergence of mollifiers}
Suppose $f \in \LL^\infty(\Rn)$ satisfies
$$
\Vert f_\ep-f \Vert _{\LL^\infty}\leq C \ep^\lambda
$$
for all $0<\ep<\ep_0$, where $0<\lambda<\frac{1}{a_j}$. Then $f \in \CC_j^{0,\lambda a_j}(\Rn)$.
\end{lemma}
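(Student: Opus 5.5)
The plan is the classical Bernstein/Lamy--Mironescu argument: split $f$ into a mollified piece at a scale tied to $|h|$ and a remainder. Fix $0<|h|\le \min(1,\ep_0^{a_j})/2$ and the direction $e_j$. We must show $\Vert \Delta_j^h f\Vert_{\LL^\infty}\le C|h|^{\lambda a_j}$; here $k=0$ and the exponent $\theta=\lambda a_j\in(0,1)$. For $|h|$ bounded away from zero the estimate is trivial from $f\in\LL^\infty$, so only small $|h|$ matters. Write, for a scale $\ep\in(0,\ep_0)$ to be chosen,
$$
\Delta_j^h f = \Delta_j^h (f-f_\ep) + \Delta_j^h f_\ep ,
$$
so that $\Vert \Delta_j^h(f-f_\ep)\Vert_{\LL^\infty}\le 2C\ep^\lambda$ by hypothesis. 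The whole difficulty lies in the second term, which requires a bound on $\Vert \p_j f_\ep\Vert_{\LL^\infty}$ of the form $C\ep^{\lambda-1/a_j}$. Once we have it, $|\Delta_j^h f_\ep|\le |h|\,C\ep^{\lambda-1/a_j}$. Choosing $\ep=|h|^{a_j}$, which is the anisotropic scale at which the kernel $\Phi_\ep$ has width $|h|$ in direction $x_j$, gives
$$
\Vert \Delta_j^h f\Vert_{\LL^\infty}\le C|h|^{\lambda a_j}+C|h|\,|h|^{\lambda a_j-1}\lesssim |h|^{\lambda a_j}.
$$

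The derivative bound on $f_\ep$ comes from a dyadic telescoping argument. Since $\Phi_\ep$ is an anisotropic rescaling of a fixed $\CC_c^\infty$ kernel, we have $\Vert\p_j \Phi_\ep\Vert_{\LL^1}=c\,\ep^{-1/a_j}$. Set $\ep_k=2^{-k}\ep_1$ with $\ep_1<\ep_0$ fixed and telescope:
$$
\p_j f_{\ep_K}=\p_j f_{\ep_1}+\sum_{k=1}^{K-1}\p_j\bigl(f_{\ep_{k+1}}-f_{\ep_k}\bigr).
$$
Each increment is controlled with a semigroup-like trick, as follows. Let $\Psi$ be a second fixed kernel (for instance $\Phi$ itself) and write $g_k=f_{\ep_{k+1}}-f_{\ep_k}$, which satisfies $\Vert g_k\Vert_\infty\le 2C\ep_k^\lambda$. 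Then $g_k*\Psi_{\ep_{k+1}}$ has $\p_j$-norm at most $\ep_{k+1}^{-1/a_j}\Vert g_k\Vert_\infty$. The discrepancy between $g_k$ and $g_k*\Psi_{\ep_{k+1}}$ is handled by writing
$$
\p_j f_\ep=\p_j\bigl((f_\ep-f)*\Phi_\ep\bigr)+\p_j(f*\Phi_\ep*\Phi_\ep).
$$
The first term is bounded by $C\ep^{\lambda}\cdot c\,\ep^{-1/a_j}$ directly. This reduces the task to bounding $\p_j$ of the doubly mollified function, and that function obeys the same hypothesis with $\Phi*\Phi$ in place of $\Phi$.

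I expect the main obstacle to be exactly this point: the hypothesis only controls $f_\ep-f$ in sup norm and gives no derivative control. The cleanest route is to avoid the telescoping altogether, and it is the one I would commit to. Split directly
$$
\Delta_j^h f=\Delta_j^h(f-f_\ep)+(\Delta_j^h f)*\Phi_\ep ,
$$
and estimate $(\Delta_j^h f)*\Phi_\ep=f*\Delta_j^{-h}\Phi_\ep$. It remains to bound $f*\Delta_j^{-h}\Phi_\ep$ by $C\ep^\lambda$ when $\ep=|h|^{a_j}$, which is not immediate. Since $\int\Delta_j^{-h}\Phi_\ep=0$, I would replace $f$ by $f-f_{\ep}$ and write
$$
f*\Delta_j^{-h}\Phi_\ep=(f-f_\ep)*\Delta_j^{-h}\Phi_\ep+f_\ep*\Delta_j^{-h}\Phi_\ep .
$$
The first piece is $\le 2C\ep^\lambda\Vert\Phi\Vert_{\LL^1}$. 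The second piece equals $\Delta_j^h f_{\ep}*\Phi_\ep$, and this feeds back into the original quantity at the same scale. The way to close the loop is an iteration in scales: define $\omega(t)=\sup_{|h|\le t}\Vert\Delta_j^hf\Vert_\infty$ and derive an inequality of the form
$$
\omega(t)\le C t^{\lambda a_j}+\tfrac12\,\omega(2t)
$$
using the smallness of $\Delta_j^h\Phi_\ep$ relative to a coarser mollifier. Iterating this inequality yields $\omega(t)\lesssim t^{\lambda a_j}$, and the condition $\lambda a_j<1$ is what makes the resulting geometric series converge. Verifying the factor $\tfrac12$, which may require choosing the ratio of scales large, is the step needing care.
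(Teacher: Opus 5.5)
Your outer frame is correct and matches the paper's last step. You split $\Delta_j^h f=\Delta_j^h(f-f_\ep)+\Delta_j^h f_\ep$, use a bound $\Vert \p_j f_\ep\Vert_{\LL^\infty}\le C\ep^{\lambda-1/a_j}$, and take $\ep=|h|^{a_j}$. But that derivative bound is the whole content of the lemma, and neither of your routes proves it.

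\begin{itemize}
\item \textbf{Second paragraph.} The decomposition is at the \emph{same} scale. The correct identity is $f_\ep=(f-f_\ep)*\Phi_\ep+f_\ep*\Phi_\ep$; your version has the wrong sign on the first term. Its second piece only gives $\Vert \Phi_\ep*\p_j f_\ep\Vert_{\LL^\infty}\le \Vert\Phi\Vert_{\LL^1}\Vert \p_j f_\ep\Vert_{\LL^\infty}$ with $\Vert\Phi\Vert_{\LL^1}\ge 1$, which is circular. Noting that $f*(\Phi*\Phi)_\ep$ satisfies the same hypothesis merely restates the problem.
\item \textbf{Third paragraph.} You land on the same-scale quantity $\Phi_\ep*\Delta_j^h f_\ep$ again. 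You then assert $\omega(t)\le Ct^{\lambda a_j}+\tfrac12\omega(2t)$ with no mechanism producing the small factor. Smallness of $\Delta_j^h\Phi_\ep$ by itself yields only $\Vert f*\Delta_j^h\Phi_\ep\Vert_{\LL^\infty}\lesssim |h|\,\ep^{-1/a_j}\Vert f\Vert_{\LL^\infty}$, which involves neither $\omega$ nor any decay.
\end{itemize}
So the argument is missing precisely its crux.

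The idea you need, and the one the paper uses, is to compare with a \emph{coarser} scale $b\ep$, $b>1$. From $f_\ep=\Phi_\ep*(f-f_{b\ep})+\Phi_\ep*f_{b\ep}$ one gets
$$
\Vert \p_j f_\ep\Vert_{\LL^\infty}\le C\ep^{-1/a_j}(b\ep)^{\lambda}+L\Vert \p_j f_{b\ep}\Vert_{\LL^\infty},\qquad L=\Vert\Phi\Vert_{\LL^1}.
$$
Choose $b$ with $Lb^{\lambda-1/a_j}<1$; this is where $\lambda<1/a_j$ enters. Iterate up to $b^N\ep\approx\ep_0$, where the crude bound $\Vert\p_j f_{b^N\ep}\Vert_{\LL^\infty}\le\Vert\p_j\Phi_{b^N\ep}\Vert_{\LL^1}\Vert f\Vert_{\LL^\infty}$ applies. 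This gives a convergent geometric series, and the leftover factor $L^N\approx(\ep_0/\ep)^{\log_b L}$ is also $\lesssim\ep^{\lambda-1/a_j}$ by the choice of $b$.

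Your $\omega$-route can also be completed, but only with an ingredient you did not supply. Since $\int_{\R}\p_j\Phi_\ep\,\dd y_j=0$ along every line parallel to $e_j$, one has
$$
\p_j f_\ep(x)=\int \p_j\Phi_\ep(y)\bigl(f(x-y)-f(x-y+y_je_j)\bigr)\,\dd y,
$$
hence $\Vert\p_j f_\ep\Vert_{\LL^\infty}\le c\,\ep^{-1/a_j}\,\omega(c'\ep^{1/a_j})$. With $\ep=(Kt)^{a_j}$ this gives $\omega(t)\le CK^{\lambda a_j}t^{\lambda a_j}+\tfrac{c}{K}\,\omega(c'Kt)$. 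This iterates to $\omega(t)\lesssim t^{\lambda a_j}$ once $K$ is large, because $\lambda a_j<1$.
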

\begin{proof}
We first claim that $\Vert \p_j f_\ep\Vert_{\LL^\infty} \leq C\ep^{\lambda-\frac{1}{a_j}}$ (where $C$ may depend on $\ep_0$ and $f$).
Set $L=\Vert \Phi \Vert_{\LL^1}$ and choose some number $b>1$ so that $Lb^{\lambda-\frac{1}{a_j}}<1$.
Then there exists $N$ such that $b^N \ep <\ep_0 \leq b^{N+1}\ep.$ By writing
$$
f_\ep = \Phi_\ep *(f-f_{b\ep})+\Phi_\ep*f_{b\ep}
$$
we can estimate
\begin{align*}
\Vert \p_j f_\ep\Vert_{\LL^\infty} &\leq \Vert \p_j\Phi_\ep \Vert _{\LL^1} \Vert f-f_{b\ep} \Vert_{\LL^\infty}+\Vert \Phi_\ep \Vert_{\LL^1}\Vert \p_jf_{b\ep}\Vert_{\LL^\infty} \\
& \leq C\ep^{-1/a_j}(b \ep)^\lambda+L\Vert \p_j f_{b\ep}\Vert_{\LL^\infty}
\end{align*}
Iterating this we see that 
\begin{align*}
\Vert \p_j f_\ep\Vert_{\LL^\infty} \leq C \ep^{\lambda-1/a_j} \sum_{k=1}^N(Lb^{\lambda-\frac{1}{a_j}})^{k} + L^N\Vert \p_j f_{b^N\ep}\Vert_{\LL^\infty}
\end{align*}
from which we see that $\Vert \p_j f_\ep\Vert_{\LL^\infty} \leq C\ep^{\lambda-\frac{1}{a_j}}$ as claimed. Now using this estimate, we have
\begin{align*}
\Vert \Delta_j^hf\Vert_{\LL^\infty} &\leq  \Vert \Delta_j^h(f-f_\ep)\Vert_{\LL^\infty}+\Vert \Delta_j^hf_\ep\Vert_{\LL^\infty}\\
& \leq 2\Vert f-f_\ep \Vert_{\LL^\infty}+|h|\Vert \p_jf_\ep\Vert_{\LL^\infty}\\
& \leq C\ep^{\lambda}+|h|\ep^{\lambda-1/a_j}.
\end{align*}
Balancing these terms we set $\ep=|h|^{a_j}$ to conclude
$$
\Vert \Delta_j^h f \Vert_{\LL^\infty} \leq C|h|^{\lambda a_j}
$$
for all $|h|$ sufficiently small.
\end{proof}
Combining this lemma with our earlier remarks, we have the following. For convenience we adopt the notation $\CC^{k,0}_j = \CC^{k-1,1}_j$ for $k \geq 1$.
\begin{prop}
Let $u \in \WW^{\mbf{a},p}(\Rn)$.
\begin{enumerate}[(i)]
\item 
Suppose that $\n u \in \LL^\infty(\Rn)$. For each $\langle \beta, \mbf{a}^{-1} \rangle <1$, if $s_{\beta,j}=a_j(1-\langle \beta, \mbf{a}^{-1} \rangle)$, then 
$$
\p^\beta u \in \bigcap_{j=1}^n C_j^{\lfloor s_{\beta,j}\rfloor, \left\{s_{\beta,j}\right\}}.
$$
\item Suppose that for each direction $j$, $\n u$ is Hölder continuous in direction $x_j$ with exponent $a_j \lambda$
(that is, $\n u \in \CC_{\delta_\mbf{a}}^{\lambda \max a_i}$ for some $0<\lambda<\frac{1}{\max a_i}$). For each $\langle \beta, \mbf{a}^{-1} \rangle < 1$, define the 
$\tilde{s}_{\beta,j} = a_j(1 - \langle \beta, \mbf{a}^{-1} \rangle+ \lambda)$. Assume that $\left\{\tilde s_{\beta,j} \right\} \neq 0$ for each $j$. 
Then
$$
\p^\beta u \in \bigcap_{j=1}^n C_j^{\lfloor \tilde{s}_{\beta,j}\rfloor, \left\{\tilde{s}_{\beta,j}\right\}}.
$$
\end{enumerate}
\end{prop}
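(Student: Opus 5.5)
The plan is to reduce everything to Lemma \ref{lemma rate of convergence of mollifiers}, applied direction by direction, together with the mollification estimates \eqref{eqn mollficaiton estimate boundedness} and \eqref{eqn mollification estimate holder}. For part (i), fix $\beta$ with $\langle \beta,\mbf{a}^{-1}\rangle<1$, set $\lambda=1-\langle\beta,\mbf{a}^{-1}\rangle$ and fix a direction $j$. Write $s=s_{\beta,j}=a_j\lambda$ and $k=\lfloor s\rfloor$.

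\textbf{Case $s\notin\N$.} Consider $g=\p_j^{k}\p^\beta u=\p^{\beta+ke_j}u$. Since $\langle\beta+ke_j,\mbf{a}^{-1}\rangle=1-\lambda+k/a_j<1$, estimate \eqref{eqn mollficaiton estimate boundedness} applies to $\beta+ke_j$. It gives $\Vert g_\ep-g\Vert_{\LL^\infty}\le C\ep^{\lambda-k/a_j}\Vert\n u\Vert_{\LL^\infty}$. The exponent $\lambda'=\lambda-k/a_j$ lies in $(0,1/a_j)$ precisely because $k<s<k+1$.

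Lemma \ref{lemma rate of convergence of mollifiers} then yields $g\in\CC^{0,\lambda'a_j}_j=\CC^{0,\{s\}}_j$. Boundedness of $g$ and of $\p^\beta u$ itself also follows from \eqref{eqn mollficaiton estimate boundedness}, since $\p^\beta u_\ep$ is bounded for each fixed $\ep$. Hence $\p^\beta u\in\CC_j^{k,\{s\}}$.

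\textbf{Case $s\in\N$.} Here we need $\CC^{s-1,1}_j$, i.e. $\p_j^{s-1}\p^\beta u$ Lipschitz in $x_j$. Now $\langle\beta+se_j,\mbf{a}^{-1}\rangle=1$, so $\beta+se_j\in\HH$. Thus $\p_j(\p_j^{s-1}\p^\beta u)$ is an entry of $\n u$, which lies in $\LL^\infty$, and the Lipschitz property in $x_j$ follows directly. This holds provided the intermediate derivatives are bounded, which again comes from \eqref{eqn mollficaiton estimate boundedness}.

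For part (ii) the scheme is the same, but one extra factor $\ep^{\lambda}$ has to be gained from the Hölder continuity of $\n u$. I would use \eqref{eqn mollification estimate holder} for $\p^\gamma$ with $\gamma=\beta+ke_j$, now taking $k=\lfloor\tilde s_{\beta,j}\rfloor$. The difference $|\p_i^{a_i}u(y)-\p_i^{a_i}u(x)|$ is bounded by $C\delta_{\mbf a}(x,y)^{\lambda\max a_i}$. On the support of $H^\gamma_{i,\tau_1,\tau_2}(x-\cdot)$ at scale $\ep$, this distance is $\lesssim\ep^{1/\max a_i}$, so the integrand picks up a factor $\ep^{\lambda}$.

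Redoing the $\LL^1$ computation of $H$ with this weight gives
\begin{equation*}
\Vert\p^\gamma u_{\tau_1}-\p^\gamma u_{\tau_2}\Vert_{\LL^\infty}\le C\bigl(\tau_2^{1-\langle\gamma,\mbf a^{-1}\rangle+\lambda}-\tau_1^{1-\langle\gamma,\mbf a^{-1}\rangle+\lambda}\bigr).
\end{equation*}
This is meaningful even when $\langle\gamma,\mbf a^{-1}\rangle\ge1$, as long as the exponent is positive. The aim is to show that $\p^\gamma u$ exists as a bounded continuous function with $\Vert\p^\gamma u_\ep-\p^\gamma u\Vert_\infty\lesssim\ep^{\tilde s/a_j-k/a_j}$. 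Since $\{\tilde s_{\beta,j}\}\ne0$, this exponent lies in $(0,1/a_j)$, and Lemma \ref{lemma rate of convergence of mollifiers} gives the claim.

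The main obstacle is the range $\langle\gamma,\mbf a^{-1}\rangle>1$, where $\gamma$ exceeds the order of $\n$. There \eqref{eqn mollification estimate holder} must be derived by moving the derivatives $\p^\gamma$ onto the kernel, and the cancellation $\int\p^{\gamma}G_i=0$ (valid since $\gamma\ne0$) must be used to subtract the constant $\p_i^{a_i}u(x)$. Convergence of $\p^\gamma u_\ep$ as $\ep\searrow0$ then follows from the positive exponent. I expect this bookkeeping, together with checking that the scaling of $H$ indeed produces $\ep^{-\langle\gamma,\mbf a^{-1}\rangle}$, to be the delicate part.
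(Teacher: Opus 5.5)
Your proposal is correct and follows the paper's own route: the convergence rates \eqref{eqn mollficaiton estimate boundedness} and \eqref{eqn mollification estimate holder} (the latter improved by the factor $\ep^{\lambda}$ from the H\"older continuity of $\n u$) are fed into Lemma \ref{lemma rate of convergence of mollifiers}, with direct integration when the fractional part vanishes. Your version is more careful than the paper's sketch of (ii), which applies the lemma to $\p^\beta u$ with rate $\ep^{1-\langle\beta,\mbf a^{-1}\rangle+\lambda}$ (within the lemma's hypothesis only when $\tilde s_{\beta,j}<1$); you instead pass to $\p^{\beta+ke_j}u$ and correctly handle the possibility $\langle\beta+ke_j,\mbf a^{-1}\rangle>1$ by putting all derivatives on the kernel and using $\int\p^{\gamma}G_i\,\dd x=0$.
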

\begin{proof}
$(i)$ follows immediately from Lemma \ref{lemma rate of convergence of mollifiers} and \eqref{eqn mollficaiton estimate boundedness},
noting that in the case where the fractional part is zero, we may integrate directly to deduce Lipschitz regularity. To prove $(ii)$,
we consider the case $\beta \neq 0$ as otherwise, the result follows from integrating the first order derivatives. At scale $\ep$, we have
$$
|\p_i^{a_i}u(y)-\p_i^{a_i}u(x)| \leq C \sum_{k=1}^n |x_k-y_k|^{a_k\lambda} \leq C \ep^{\lambda }.
$$
Therefore by taking the support of $\Phi$ to be the unit cube in the construction of the integral representations, it follows from
\eqref{eqn mollification estimate holder} that
$$
\Vert \p^\beta u-\p^\beta u_\ep\Vert_{\LL^\infty} \leq C \ep^{1-\langle \beta, \mbf{a}^{-1} \rangle +\lambda}.
$$
Once more applying Lemma \ref{lemma rate of convergence of mollifiers} gives the result.
\end{proof}
\end{section}

\end{document}